\documentclass{article}
\usepackage[utf8]{inputenc}
\usepackage[centering,top=1.5in,bottom=1.2in,left=1.4in,right=1.4in]{geometry}
\usepackage{verbatim}
\usepackage{graphicx,subcaption}
\usepackage{graphbox}
\usepackage{amsmath,amssymb,amsthm}
\usepackage{hyperref}
\usepackage{bm}
\usepackage{enumitem}
\usepackage[usenames]{color}
\usepackage[dvipsnames]{xcolor}
\newtheoremstyle{myThm} 
    {\topsep}                    
    {\topsep}                    
    {\itshape}                   
    {}                           
    {\bfseries} 
    {.}                          
    {.5em}                       
    {}  

\newtheoremstyle{myRem} 
    {\topsep}                    
    {\topsep}                    
    {}                   
    {}                           
    {\bfseries} 
    {.}                          
    {.5em}                       
    {}  

\newtheoremstyle{myDef} 
    {\topsep}                    
    {\topsep}                    
    {}                   
    {}                           
    {\bfseries} 
    {.}                          
    {.5em}                       
    {}  

\theoremstyle{myThm}
\newtheorem{theorem}{Theorem}[section]
\newtheorem{lemma}[theorem]{Lemma}
\newtheorem{proposition}[theorem]{Proposition}
\newtheorem{corollary}[theorem]{Corollary}

\newtheorem{assumption}[theorem]{Assumptions}

\theoremstyle{myRem}
\newtheorem{remark}[theorem]{Remark}

\theoremstyle{myDef}


\def\R{\mathbb{R}}
\def\C{\mathbb{C}}
\def\M{\mathcal{M}}
\def\A{{\mathcal{A}}}
\def\G{\mathcal{G}}
\def\Gu{\mathcal{G}_u}
\def\Gun{\mathcal{G}_{u_n}}
\def\Lap{{\mathcal{L}}}
\def\O{{\mathcal{O}}}
\def\T{{\mathcal{T}}}

\newcommand{\rd}{\mathrm{d}}

\graphicspath{{./figures/}}



\newcommand{\zzy}[1]{{#1}}

\newcommand{\rev}[1]{{{#1}}}

\title{Exponential convergence of Sobolev gradient descent for a class of nonlinear eigenproblems}
\date{}
\author{Ziyun Zhang\thanks{Applied and Computational Mathematics, Caltech, Pasadena, CA 91125 ({zyzhang@caltech.edu}).}}
\begin{document}
\maketitle

\begin{abstract}
We propose to use the {\L}ojasiewicz inequality as a general tool for analyzing the convergence rate of gradient descent on a Hilbert manifold, without resorting to the continuous gradient flow. Using this tool, we show that a Sobolev gradient descent method with adaptive inner product converges exponentially fast to the ground state for the Gross-Pitaevskii eigenproblem. This method can be extended to a class of general high-degree optimizations or nonlinear eigenproblems under certain conditions. We demonstrate this generalization using several examples, in particular a nonlinear Schr\"odinger eigenproblem with an extra high-order interaction term. Numerical experiments are presented for these problems.

\end{abstract}


\section{Introduction}

The Gross-Pitaevskii eigenproblem, a well-known example of the nonlinear Schr\"odinger eigenproblem, seeks $\lambda \in \R$ and $v\in H_0^1(\Omega)$ that satisfy the following equation
\begin{equation}
\label{eq:eig_original}
    -\Delta v + V v + \beta |v|^2v = \lambda v \quad \text{on } \Omega \subset \R^d,
\end{equation}
where $\Omega$ is a bounded region in $\R^d$, $V(x)\ge 0$ is an external trapping potential, and $\beta \ge 0$ is a parameter describing the repulsive interaction between particles. In physics, this describes the Bose-Einstein condensate when the temperature is close to absolute zero. The eigenstate $v$ corresponding to the smallest $\lambda$ describes the ground state of this system. It has long been studied both in experiments \cite{anderson1995observation} and in numerical analysis \cite{cances2010numerical,dusson2017posteriori,henning2014two,lieb2001bosons}. 

To find the ground state $v$ is equivalent to solving the following minimization problem:
\begin{equation}
\label{eq:E}
    \min_{\|u\|_{L^2}=1,\,\, u\in H_0^1(\Omega)} E(u):= \int_{\Omega} \left( |\nabla u|^2+V|u|^2+\frac{\beta}{2} |u|^4 \right) \rd x.
\end{equation}
The constraint set $\{u \in H_0^1(\Omega): \|u\|_{L^2}=1\}$ is the unit sphere in $H_0^1(\Omega)$. It can be seen as an infinite dimensional Hilbert manifold. Such a manifold (with additional $L^\infty(\Omega)$ constraints) will be denoted as $\M$ in subsequent sections. Thus many manifold optimization methods on the Riemannian manifold are readily applicable to this problem, with diverse techniques and rich theories.

In this paper, we focus on a special manifold gradient descent method named the \emph{Sobolev projected gradient descent (Sobolev PGD)}, first proposed in \cite{henning2020sobolev}. This method has the following iteration formula:
\begin{align}
\label{eq:SPGD}
    u_{n+1} = R\left(\left(1-\tau_n\right)u_n + \tau_n \cdot \frac{(u_n,u_n)_{L^2}}{(\G_{u_n} u_n,\G_{u_n} u_n)_{a_{u_n}}}\G_{u_n} u_n\right),
\end{align}
where $R$ is the retraction back onto the manifold, $\tau_n$ is the $n$-th step size, $(\cdot, \cdot)_{a_{u_n}}$ is an adaptive inner product in the tangent space of $\M$, and $\Gun$ is \rev{the Greens operator associated with $(\cdot, \cdot)_{a_{u_n}}$}. Their definitions are in Section \ref{sec:main}. 
The main result of this paper is as follows.
\begin{theorem}[Main result, informal]
    If initialized with a positive initial guess $u_0$, the $a_u$-Sobolev gradient descent which is given by (\ref{eq:SPGD}) converges to the ground state of the eigenproblem (\ref{eq:eig_original}) exponentially fast.
\end{theorem}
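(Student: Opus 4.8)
The plan is to decouple the convergence analysis into three parts that are proved independently and then chained together: a sufficient-decrease estimate for the discrete iteration \eqref{eq:SPGD}, a {\L}ojasiewicz--Simon gradient inequality for the energy $E$ at the ground state, and a mechanism that keeps the iterates inside the basin of the positive minimizer. The whole point, following the abstract, is that the {\L}ojasiewicz inequality is applied \emph{directly} to the discrete scheme rather than to a continuous gradient flow, so the discrete-time bookkeeping has to be done by hand.

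First I would establish a monotone energy decay of the form
$$E(u_n) - E(u_{n+1}) \ge c_1 \, \|\operatorname{grad}_{a_{u_n}} E(u_n)\|_{a_{u_n}}^2,$$
valid for an admissible range of step sizes $\tau_n$. This needs two ingredients: a descent-direction computation showing that the update direction $\G_{u_n} u_n$, after the normalization $(u_n,u_n)_{L^2}/(\G_{u_n}u_n,\G_{u_n}u_n)_{a_{u_n}}$, is aligned with the Sobolev (Riemannian) gradient of $E$ in the adaptive metric $a_{u_n}$; and a second-order, Lipschitz-type control on how the retraction $R$ and the change of metric distort the pulled-back energy, so that a Taylor expansion of $\tau_n \mapsto E(u_{n+1})$ yields the quadratic gain above. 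Second I would prove positivity preservation and uniform bounds: since $\G_{u_n}$ acts like the inverse of a positive Schr\"odinger-type operator, it maps nonnegative functions to strictly positive ones, so a positive $u_0$ keeps every $u_n$ positive after retraction, while the energy decay confines the trajectory to a sublevel set that is bounded in $H_0^1(\Omega)$ and, through the $L^\infty$ constraint built into $\M$, precompact. Positivity is exactly what rules out convergence to an excited eigenstate and pins the limit to the unique positive ground state $u_*$.

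Third I would invoke the {\L}ojasiewicz--Simon inequality. Because the nonlinearity $|u|^4$ is real-analytic and the Hessian of $E$ restricted to the tangent space of $\M$ at $u_*$ is positive definite (a consequence of the spectral gap between the ground state and the first excited state), there is a neighborhood of $u_*$ on which
$$\bigl| E(u) - E(u_*) \bigr|^{1/2} \le C \, \|\operatorname{grad} E(u)\|,$$
i.e. the {\L}ojasiewicz exponent takes the sharp value $\theta = 1/2$. Writing $e_n := E(u_n) - E(u_*)$ and combining this with the sufficient-decrease estimate gives the geometric recursion $e_{n+1} \le (1 - c_1/C^2)\, e_n$, hence exponential decay of the energy gap; this transfers to exponential convergence of $u_n \to u_*$ in $H_0^1(\Omega)$ via the lower bound $\|u-u_*\|_{H^1}^2 \lesssim E(u)-E(u_*)$ furnished by the same nondegenerate Hessian.

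The main obstacle I expect is the \emph{adaptive metric}: the sufficient-decrease estimate naturally lives in the moving inner product $a_{u_n}$, whereas the {\L}ojasiewicz inequality is most easily established in a fixed reference norm, and the two must be reconciled. The decisive step is therefore to show that the family $\{a_{u_n}\}$ is \emph{uniformly equivalent} to a fixed inner product along the trajectory, which follows from the uniform bounds of the second step, so that gradient norms in the two metrics are comparable up to absolute constants and the chain $e_n \lesssim \|\operatorname{grad}_{a_{u_n}} E(u_n)\|_{a_{u_n}}^2 \lesssim e_n - e_{n+1}$ closes. Quantifying the retraction error and choosing $\tau_n$ so that the energy decay holds with a uniform constant $c_1$ are the remaining technical hurdles.
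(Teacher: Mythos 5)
Your proposal is correct in outline, but it reaches the crucial {\L}ojasiewicz exponent $\theta=\tfrac{1}{2}$ by a genuinely different mechanism than the paper. You take the classical {\L}ojasiewicz--Simon route: analyticity plus a coercive manifold Hessian at the limit $u_*$, which simultaneously yields $|E(u)-E(u_*)|^{1/2}\lesssim\|\text{grad } E(u)\|$ and the quadratic growth bound $\|u-u_*\|_{H^1}^2\lesssim E(u)-E(u_*)$ that converts energy decay into convergence of the iterates. The paper never differentiates twice: its Theorem \ref{thm:GP-L} works with the convexity estimate $E(u)-E(v)\le (u,u)_{a_u}-(v,v)_{a_u}$, the exact identity $\|\text{grad } E(u)\|_{a_u}^2=(u,u)_{a_u}-1/(u,\G_u u)_{L^2}$, and a purely linear spectral-gap inequality for $\A_u$ (Lemma \ref{lemma:linear}, fed by the perturbation Lemma \ref{lemma:perturb}); your sufficient-decrease estimate is the product of the paper's conditions (\ref{eq:D}) and (\ref{eq:S}) (Theorems \ref{thm:GP-D} and \ref{thm:GP-S}), your positivity/compactness step is Lemma \ref{lemma:nonnegative}, and your uniform metric equivalence is Lemma \ref{lemma:equiv}. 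Each route has merits: yours is shorter near the limit (with a coercive Hessian the analyticity hypothesis is in fact redundant --- a Taylor expansion suffices --- and the quadratic growth bound lets you bypass the telescoping-sum argument of Theorem \ref{thm:LDS}), while the paper's route avoids second-order information entirely, gives constants explicit in the spectral gap, and extends essentially verbatim to the generalizations of Section \ref{sec:gen}. One caution: your parenthetical claim that Hessian positivity is ``a consequence of the spectral gap'' understates what is actually needed. The manifold Hessian at $v$ is $\A_v+2\beta v^2-\lambda_1$ restricted to $\T_v\M$, and its coercivity requires not only the gap $\lambda_2>\lambda_1$ of $\A_v$ but also the fact that $v$ is the ground state of its own linearization $\A_v$ --- the paper's ``double ground state'' property (Theorem \ref{thm:ground}) --- which fails at excited states and is precisely why those are saddle points. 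Your positivity step does furnish this ingredient (a positive eigenstate is the Perron--Frobenius ground state of the Schr\"odinger operator $\A_{u_*}$), but the implication should be stated explicitly, since it is the crux of the whole argument rather than a side condition.
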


The idea of using a discretized normalized gradient flow (DNGF) to solve Problem (\ref{eq:E}) can be traced back to \cite{bao2004computing}. Following this seminal work there have been a number of variants, see e.g. \cite{danaila2010new,danaila2017computation,faou2018convergence} and the review paper \cite{bao2013mathematical}. The viewpoint of (Riemannian) manifold optimization has also been explicitly adopted in \cite{danaila2017computation}. 
Based on those methods with fixed inner products, the adaptive version of $a_u$-Sobolev gradient descent has recently been proposed in \cite{henning2020sobolev}. 
Despite its popularity, quantitative convergence analysis of the DNGF family has been quite lacking. The convergence rate has been either unavailable, or only proved for the gradient flow \cite{henning2020sobolev}.
Another popular choice is the self consistent field iteration (SCF), see e.g. \cite{cances2000convergence}. Rigorous global convergence rate is however difficult to establish.
There are also second-order methods like the Riemannian Newton method, but they require second-order information which can be expensive to obtain.

\rev{We highlight the main differences between the current paper and \cite{henning2020sobolev}. The authors of \cite{henning2020sobolev} first propose the Sobolev gradient descent method (\ref{eq:SPGD}). They establish the exponential convergence rate of the \emph{time-continuous} gradient flow. But the important question of whether the \emph{time-discrete} gradient descent also achieves optimal exponential convergence rate remains open. Our main contribution is to give a confirmatory answer to this question. We do this by introducing the \emph{{\L}ojasiewicz inequality} tool, which is a general analytical tool that is applicable to a wide class of problems.


Specifically,} in Section \ref{sec:Lojasiewicz}, using the {{\L}ojasiewicz inequality} tool, we reveal that the key to exponential convergence is the quadratic nature of the objective energy functional. In other words, regarded as a polynomial, the objective functional should behave like a degree-2 polynomial under the given manifold metric. The {\L}ojasiewicz inequality has been widely used in the optimization community, see e.g. \cite{frankel2015splitting,schneider2015convergence}. Yet it has scarcely been applied to the problems of interest in this paper. 

Although the degree of polynomial of the objective function in Problem (\ref{eq:E}) is formally higher than quadratic, Method (\ref{eq:SPGD}) changes the situation by using an adaptive inner product $a_u(\cdot, \cdot)$ instead of a fixed inner product. As a comparison, using a fixed inner product, the {\L}ojasiewicz exponent (the $\theta$ in Theorem \ref{thm:LDS}) calculated in \cite{zhang2019geometric} is $1/4$; while in this paper, using an adaptive inner product, we have $\theta = 1/2$. The latter is more desirable according to Theorem \ref{thm:LDS}. Thus, in Section \ref{sec:main}, using the {\L}ojasiewicz inequality tool, we are able to prove the exponential convergence rate of discrete time gradient descent directly. 

The {\L}ojasiewicz inequality tool also makes the Sobolev gradient descent easily applicable to general optimization of high-degree objective or eigenvalue problems other than the Gross-Pitaevskii eigenvalue problem. Its interesting property of making a high-degree polynomial behave like quadratic is not specific to a certain problem, but is general. Examples include the biharmonic Schr\"odinger, the nonlinear Schr\"odinger with a different order or extra interaction terms, and potentially some general manifold optimization problems. 

In addition to the necessary regularity conditions, the only essential requirement is that the global ground state of the nonlinear problem is also the unique ground state of its \emph{linearized} version, what we call the ``double ground state'' property.\footnote{This property is nontrivial. Although an eigenstate of the nonlinear problem is always an eigenstate of the linearized problem, it is not always the \emph{lowest energy} eigenstate (i.e., ground state) of the linearized problem.} For Problem (\ref{eq:eig_original}), this property will be rigorously proved in Section \ref{sec:LDS_GP}. For many other problems, it is either provable, or a reasonable assumption according to numerical evidence. We summarize this result as the following:

\begin{proposition}[Generalization of main result, informal]
    If the objective problem satisfies the ``double ground state'' property and necessary regularity conditions, then with a proper initialization $u_0$, the $a_u$-Sobolev gradient descent converges to a minimizer of this problem exponentially fast.
\end{proposition}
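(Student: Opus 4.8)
The plan is to reduce the general statement to the same abstract mechanism that drives the Gross--Pitaevskii case, namely a {\L}ojasiewicz inequality with exponent $\theta = 1/2$ combined with the descent property of the iteration (\ref{eq:SPGD}), and then invoke Theorem \ref{thm:LDS}. First I would fix the abstract setting: a constraint manifold $\M$ given by a unit $L^2$-sphere intersected with the relevant $L^\infty$ ball inside a Hilbert space, an energy $E$ that is a polynomial of degree higher than two in $u$, and the adaptive inner product $a_u(\cdot,\cdot)$ whose defining feature is that it absorbs the high-degree part of $E$ into the metric. Under the stated regularity conditions I would verify the three structural facts that the proof of the main result uses implicitly: (i) the Green's function $\Gu$ of $a_u(\cdot,\cdot)$ exists and depends continuously on $u$, so the iteration map is well defined; (ii) the iteration (\ref{eq:SPGD}) is a genuine retraction-based descent step, so $E(u_{n+1}) \le E(u_n)$ with a quantitative decrease controlled by the $a_{u_n}$-norm of the Riemannian gradient; and (iii) the admissible set is invariant and the iterates remain in a fixed bounded region, which is where ``proper initialization'' (e.g.\ positivity) is used.

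The heart of the argument is establishing the {\L}ojasiewicz inequality with the sharp exponent $\theta = 1/2$ at a minimizer $v$, and this is exactly the step where the ``double ground state'' property enters. The Euler--Lagrange equation at $v$ is a nonlinear eigenvalue equation; linearizing it produces a self-adjoint operator whose quadratic form is, up to the Lagrange-multiplier correction, the Hessian of $E$ restricted to the tangent space $T_v\M$. The double ground state property says precisely that $v$ is the lowest eigenstate of this linearized operator, which yields a spectral gap above the constrained eigenvalue and hence a coercivity estimate $a_v(w,w) \lesssim \mathrm{Hess}\,E(v)[w,w]$ for tangent directions $w \perp v$. In the adaptive metric this coercivity is a nondegeneracy (Morse-type) condition at $v$, and for an analytic energy a nondegenerate constrained critical point automatically satisfies the {\L}ojasiewicz inequality with $\theta = 1/2$; equivalently, the adaptive inner product makes $E$ behave like a genuine quadratic near $v$, so the gradient norm controls the energy gap to the first power rather than to the power $1/4$ obtained for a fixed metric in \cite{zhang2019geometric}.

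With the local {\L}ojasiewicz inequality in hand, I would close the argument by a standard trapping-and-summation scheme: using the descent property (ii) together with the inequality, show that once an iterate enters the neighborhood where $\theta = 1/2$ holds the whole tail stays there, the energy gaps decay geometrically, and the iterate path has finite length, giving exponential convergence of $u_n$ to $v$ in the Hilbert norm. This last part is essentially the content of Theorem \ref{thm:LDS} applied to the sequence generated by (\ref{eq:SPGD}), so the novelty is concentrated in the coercivity step rather than in the convergence bookkeeping.

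I expect the main obstacle to be the coercivity/spectral-gap step for a \emph{general} problem: one must show abstractly that the double ground state property, plus the regularity hypotheses, forces the constrained Hessian in the adaptive metric to be uniformly positive definite on $T_v\M$, and that this positive definiteness is stable enough to hold on a whole neighborhood of $v$ rather than merely at the point itself. In the Gross--Pitaevskii case this is carried out in Section~\ref{sec:LDS_GP} using the explicit quartic structure; for the generalization the difficulty is to isolate exactly which features of the nonlinearity (convexity of the interaction term, sign conditions, and the degree being matched by the adaptive metric) are needed so that the same spectral-gap argument survives, and to confirm that the ``double ground state'' assumption is not only necessary but sufficient for $\theta = 1/2$.
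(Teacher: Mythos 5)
Your proposal shares the paper's outer architecture---verify (\ref{eq:L}), (\ref{eq:D}), (\ref{eq:S}) and invoke Theorem \ref{thm:LDS}, with a positivity-preserving initialization handling the global phase---but your route to the central condition (\ref{eq:L}) with $\theta=\tfrac12$ is genuinely different from the paper's. You go through the constrained Hessian: the ``double ground state'' property gives the spectral gap $\lambda_2>\lambda_1$ of $\A_v$, hence the coercivity you assert, which is indeed correct, since for $w\in\T_v\M$ one computes $\mathrm{Hess}\,E(v)[w,w]=2\left[(w,w)_{a_v}-\lambda_1(w,w)_{L^2}\right]+4\beta\int_{\Omega}v^2w^2\ge 2(1-\lambda_1/\lambda_2)(w,w)_{a_v}$; you then invoke the Morse/{\L}ojasiewicz--Simon principle that a nondegenerate critical point of an analytic energy has exponent $\tfrac12$. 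The paper never touches the Hessian. Its proof of Theorem \ref{thm:GP-L} rests on three exact algebraic facts: convexity of the interaction term gives $E(u)-E(v)\le (u,u)_{a_u}-(v,v)_{a_u}$; the identity $\|\text{grad } E(u)\|_{a_u}^2=(u,u)_{a_u}-1/(u,\G_u u)_{L^2}$; and a {\L}ojasiewicz inequality for the \emph{linear} operator $\A_u$ at the \emph{current iterate} (Lemma \ref{lemma:linear}), activated by the eigenfunction-perturbation Lemma \ref{lemma:perturb} (resp.\ Lemma \ref{lemma:HOI} for (\ref{eq:HOI})), which guarantees $\|u-w_1\|_{L^2}\le s<1$ once $u$ is $H^1$-close to $v$. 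What each buys: your argument is conceptually cleaner and explains in one stroke why ``double ground state'' means nondegeneracy; but it needs analyticity (or at least $C^2$ regularity with uniform Taylor remainders) of $E$ in the working topology---which fails for the non-integer powers $|v|^{2\alpha}v$ that the paper's generalization explicitly covers---it needs Fredholm-type hypotheses if one cites {\L}ojasiewicz--Simon in infinite dimensions, and it produces the inequality for a fixed-metric gradient, requiring the uniform norm equivalence of Lemma \ref{lemma:equiv} to transfer to the $a_u$-gradient appearing in (\ref{eq:L}). The paper's algebraic route avoids Taylor expansion and analyticity entirely, yields explicit constants, and carries over verbatim to the spatially discretized problems of Section \ref{sec:discretization}.

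Two smaller points. First, the obstacle you single out (stability of coercivity on a whole neighborhood of $v$) is not where the paper's difficulty lies: at a nondegenerate critical point, coercivity at $v$ plus continuity of the second derivative already suffices; the paper's quantitative work sits instead in the perturbation Lemmas \ref{lemma:perturb} and \ref{lemma:HOI}. Second, ``proper initialization'' does more in the paper than keep the iterates in an invariant bounded set: positivity is propagated along the iteration (Lemma \ref{lemma:nonnegative}) and then combined with the \emph{uniqueness of the positive eigenstate} (Theorem \ref{thm:ground}) to identify the cluster point as the ground state $v$; without that identification step the local {\L}ojasiewicz argument has no point to anchor to.
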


Specifically, an example of nonlinear Schr\"odinger eigenproblem from \cite{bao2019ground} will be rigorously discussed in Section \ref{sec:gen}. This example has an extra high-order interaction term $- \delta \Delta (|v|^2)v$ where $\delta \ge 0$. Classical methods that work for (\ref{eq:eig_original}) could become inefficient or unstable for this problem. A density function reformulation $\rho := |u|^2$ was proposed in \cite{bao2019computing}, but it has to treat the lack of continuity of $\nabla \sqrt{\rho}$ near $0^+$ with extra regularization. Therefore the adaptive Sobolev gradient descent is advantageous for its simplicity and fast convergence. 

We remark that if the domain is convex, an alternative approach to derive local linear convergence rate\footnote{Both \emph{exponential} and \emph{linear convergence} refer to the case where $err_{k} \le c^k \cdot err_0$ for some $0<c<1$. In this paper we use both terms interchangeably. The term \emph{linear convergence} is more popular in the optimization community.} of gradient descent methods is to use \emph{strong convexity} (SC). This is especially popular in the finite dimensional data science problems \cite{chi2019nonconvex}. Attempts have also been made to extend it to nonconvex settings like manifolds. Some works in this direction can be found in \cite{absil2009optimization,cances2020convergence}. We emphasize that our approach using the \L ojasiewicz inequality has its advantages over SC, namely it applies to degenerate critical points where SC could fail, and it allows more freedom in the choice of iterative algorithms and convergence measures. A more detailed comparison of these two approaches would be of interest in future research.


The rest of the paper is organized as follows. In Section \ref{sec:Lojasiewicz}, we introduce the {\L}ojasiewicz inequality tool with mixed norms on the Hilbert manifold as an abstract convergence theorem. In Section \ref{sec:main}, we establish the main result on the exponential convergence of the $a_u$-Sobolev gradient descent method applied to the Gross-Pitaevskii eigenproblem (\ref{eq:eig_original}). Section \ref{sec:discretization} is devoted to the analysis of spatial discretization. In Section \ref{sec:gen}, we introduce several extensions of the Sobolev gradient descent to other nonlinear eigenproblems. Some numerical results are presented in Section \ref{sec:numerical}. Finally, we make some concluding remarks in Section \ref{sec:conclusion}.

\section{Abstract convergence theorem using the {\L}ojasiewicz inequality}
\label{sec:Lojasiewicz}

In this section, we introduce the {\L}ojasiewicz inequality tool as an abstract convergence theorem. We show that one can deduce the convergence of an iteration algorithm from a triplet of conditions (\ref{eq:L}), (\ref{eq:D}) and (\ref{eq:S}). Furthermore, whether the convergence rate is exponential (linear) or polynomial (sublinear) is determined by the exponent in the (\ref{eq:L}) inequality.

\begin{theorem}
\label{thm:LDS}
Assume that the domain $\M$ is a Hilbert manifold. \zzy{Let $\|\cdot\|_X$ be a norm on $\T\M$, the tangent bundle of $\M$, and $\|\cdot\|_Y$ be a norm in the ambient space of $\M$ which is complete. Here $\|\cdot\|_X$ and $\|\cdot\|_Y$ can be either same or different.} Let $\{u_n\}_{n=0}^{\infty} \subset \M$ be a sequence generated by some iterative algorithm. \zzy{Assume that $E(u)$ is differentiable on $\M$ and} let $\text{grad } E(u)$ be the manifold gradient of $E(u)$. If $E(u)$ and $\{u_n\}_{n=0}^{\infty}$ satisfy the following conditions for all $n\in\mathbb{Z}_+$:
\begin{itemize}
    \item \emph{({\L}ojasiewicz Gradient Inequality)} 
    There exists $u^*$ that is a cluster point of $\{u_n\}$, and there exists $0<C_L<+\infty$, $0<\theta\leq \frac{1}{2}$, such that for large enough $n$,
    \begin{equation}
    \label{eq:L}\tag{L}
        \left|E(u_n)-E(u^*)\right|^{1-\theta}\leq C_L \|\text{grad } E(u_n)\|_X ;   
    \end{equation}
    \item \emph{(Descent Inequality)} There exists $C_D>0$ such that for large enough $n$,
    \begin{equation}
    \label{eq:D}\tag{D}
        E(u_n)-E(u_{n+1}) \geq C_D \|\text{grad } E(u_n)\|_X \|u_{n+1}-u_n\|_Y;
    \end{equation}
    \item \emph{(Step-size Condition)} There exists $C_S>0$ such that for large enough $n$,
    \begin{equation}
    \label{eq:S}\tag{S}
        \|u_{n+1}-u_n\|_Y \ge C_S \|\text{grad } E(u_n)\|_X.
    \end{equation}
\end{itemize}
Then $u^*$ is the \zzy{unique} limit point of $\{u_n\}_{n=0}^{\infty}$ w.r.t. $\|\cdot\|_Y$. Moreover, $\{u_n\}_{n=0}^{\infty}$ converge to $u^*$ with the following \zzy{asymptotic} convergence rate:
\begin{align*}
    \|u_n-u^*\|_Y \lesssim
    \begin{cases}
        e^{-cn}, & \text{if } \theta = \frac{1}{2}, \\
        n^{-\frac{\theta}{1-2\theta}}, & \text{if } \theta \in (0,\frac{1}{2}),
    \end{cases}
\end{align*}
where $c := \text{log}\,(1-\frac{C_DC_S}{2C_L^2})$.
\end{theorem}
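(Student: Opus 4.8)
The plan is to run the classical Łojasiewicz \emph{finite-length} argument, treating (\ref{eq:L}), (\ref{eq:D}), (\ref{eq:S}) as a black box and never invoking a continuous flow. First I would record that (\ref{eq:D}) forces $E(u_n)$ to be non-increasing (its right-hand side is nonnegative), while the cluster-point hypothesis supplies a subsequence along which $E(u_{n_k}) \to E(u^*)$; monotonicity then upgrades this to $E(u_n)\downarrow E(u^*)$, so that $e_n := E(u_n)-E(u^*)\ge 0$ and $e_n\to 0$. If $e_n=0$ for some $n$ the algorithm has already reached $u^*$ and there is nothing to prove, so I assume $e_n>0$ throughout, which is what lets me divide by powers of $e_n$.

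The engine of the argument is a concavity/telescoping estimate. Introduce the concave function $\phi(s)=s^{\theta}$, legitimate since $0<\theta\le \tfrac12$, and use $\phi(e_n)-\phi(e_{n+1})\ge \theta\, e_n^{\theta-1}(e_n-e_{n+1})$. Bounding $e_n-e_{n+1}=E(u_n)-E(u_{n+1})$ below by (\ref{eq:D}), and then invoking (\ref{eq:L}) in the form $\|\text{grad } E(u_n)\|_X\ge C_L^{-1}e_n^{1-\theta}$, makes the factor $e_n^{\theta-1}\|\text{grad } E(u_n)\|_X$ collapse to the constant $C_L^{-1}$, yielding
\[
\phi(e_n)-\phi(e_{n+1})\ \ge\ \frac{\theta C_D}{C_L}\,\|u_{n+1}-u_n\|_Y .
\]
Summing this telescoping inequality from $N$ to $\infty$ gives $\sum_{n\ge N}\|u_{n+1}-u_n\|_Y\le \frac{C_L}{\theta C_D}\,e_N^{\theta}<\infty$, so $\{u_n\}$ has finite length and is Cauchy in $\|\cdot\|_Y$. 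Completeness of the ambient space then produces a limit, which must agree with the cluster point $u^*$; this settles the uniqueness claim. Note that only (\ref{eq:L}) and (\ref{eq:D}) are used up to here — condition (\ref{eq:S}) enters only to quantify the rate.

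For the convergence rate I would work with the tail length $S_n:=\sum_{k\ge n}\|u_{k+1}-u_k\|_Y$, which dominates $\|u_n-u^*\|_Y$ by the triangle inequality and satisfies $S_n\le \frac{C_L}{\theta C_D}e_n^{\theta}$ by the telescoped bound. Feeding this into the step-size condition (\ref{eq:S}) (again combined with (\ref{eq:L})) converts the increment $S_n-S_{n+1}=\|u_{n+1}-u_n\|_Y$ into the discrete differential inequality $S_n-S_{n+1}\ge c'\,S_n^{(1-\theta)/\theta}$ with $c'=\frac{C_S}{C_L}\bigl(\tfrac{\theta C_D}{C_L}\bigr)^{(1-\theta)/\theta}$. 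When $\theta=\tfrac12$ the exponent is $1$, so $S_{n+1}\le\bigl(1-\tfrac{C_DC_S}{2C_L^2}\bigr)S_n$ and $S_n$ decays geometrically with ratio $1-\tfrac{C_DC_S}{2C_L^2}=e^{c}$, which is the exponential rate claimed. When $\theta\in(0,\tfrac12)$ the exponent $\alpha:=(1-\theta)/\theta>1$, and a standard comparison — integrating $t\mapsto t^{-\alpha}$ over $[S_{n+1},S_n]$ and using $t^{-\alpha}\ge S_n^{-\alpha}$ to get $S_{n+1}^{1-\alpha}-S_n^{1-\alpha}\ge(\alpha-1)c'$, then summing — gives $S_n\lesssim n^{-1/(\alpha-1)}=n^{-\theta/(1-2\theta)}$, matching the stated sublinear rate.

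The delicate points I expect are: (i) justifying $e_n\downarrow 0$ together with the strict positivity needed to form $e_n^{\theta-1}$; (ii) the concavity step, where the exponent in (\ref{eq:L}) and the product structure of (\ref{eq:D}) must conspire so that the powers of $e_n$ cancel \emph{exactly} and leave a clean telescoping sum — this is the real crux, since it is what forces the length to be finite; and (iii) in the sublinear regime, the discrete-ODE comparison lemma, whose constant bookkeeping (in particular the monotonicity bound $t^{-\alpha}\ge S_n^{-\alpha}$ on $[S_{n+1},S_n]$) must be handled carefully. I anticipate (ii) to be the heart of the matter and (iii) to be the most calculation-heavy.
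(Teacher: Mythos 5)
Your proposal is correct and takes essentially the same route as the paper's proof: monotone energy decay, the telescoping bound $e_n^{\theta}-e_{n+1}^{\theta}\ge \frac{\theta C_D}{C_L}\|u_{n+1}-u_n\|_Y$ (the paper derives this identical estimate via an integral comparison $\int y^{\theta-1}\,\mathrm{d}y$ rather than concavity of $s\mapsto s^{\theta}$, a cosmetic difference), finite length and Cauchyness of the iterates in $\|\cdot\|_Y$, and the same tail-sum recursion $S_n-S_{n+1}\ge C\,S_n^{(1-\theta)/\theta}$ with matching constants in both the exponential and sublinear regimes. The only divergence is how that recursion is solved for $\theta\in(0,\tfrac{1}{2})$: the paper argues by induction against the explicit comparison sequence $s_0\,n^{-\theta/(1-2\theta)}$, whereas you telescope $S_n^{1-\alpha}$ with $\alpha=(1-\theta)/\theta$; both are standard and yield the stated rate.
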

\begin{proof}
    $\{E(u_n)\}$ is monotonically decreasing from Condition (\ref{eq:D}). Since $u^*$ is a cluster point of $\{u_n\}$, $E(u_n) \ge E(u^*)$ for any $n$. We also have $\lim_{n\to\infty}E(u_n) = E(u^*)$ by continuity of $E(\cdot)$. Without loss of generality, assume that $E(u^*)=0$. By Conditions (\ref{eq:D}) and (\ref{eq:L}), we have
    \begin{align*}
        \|u_{n+1}-u_n\|_Y &\le \frac{E(u_n)-E(u_{n+1})}{C_D \|\text{grad } E(u_n)\|_X} \le \frac{C_L}{C_D}(E(u_n)-E(u_{n+1}))E(u_n)^{\theta-1} \\ 
        &\le \frac{C_L}{C_D} \int_{E(u_{n+1})}^{E(u_n)} y^{\theta-1} \,\rd y = \frac{C_L}{\theta C_D} (E(u_n)^\theta-E(u_{n+1})^\theta).
    \end{align*}
    Using a bootstrapping argument, we have that for any $m>n$, 
    \begin{align}
    \label{eq:proof1}
        \|u_n-u_m\|_Y \le \frac{C_L}{\theta C_D}(E(u_n)^\theta-E(u_{m})^\theta) \le \frac{C_L}{\theta C_D}E(u_n)^\theta.
    \end{align}
    Since $E(u_n)$ is convergent, we deduce that $u_n$ is convergent, and the limit point is $u^*$. 
    
    To estimate the convergence rate, let $r_n:= \sum_{k=n}^{\infty}\|u_{k+1}-u_k\|_Y$, then $\|u_n-u^*\|_Y\le r_n$. It suffices to estimate the convergence rate of $r_n$. By Conditions (\ref{eq:L}) and (\ref{eq:S}), \zzy{for large enough $n$,}
    \begin{align*}
        \left|E(u_n)-E(u^*)\right|^{1-\theta}\leq C_L \|\text{grad } E(u_n)\|_X  \le \frac{C_L}{C_S} \|u_{n+1}-u_n\|_Y.
    \end{align*}
    Since we have made the assumption that $E(u^*) =0$, we obtain
    \begin{align}
    \label{eq:proof2}
        E(u_n) \le \left(\frac{C_L}{C_S} \|u_{n+1}-u_n\|_Y\right)^{\frac{1}{1-\theta}}.
    \end{align}
    Thus, we have
    \begin{align*}
        r_n & = \sum_{k=n}^{\infty}\|u_{k+1}-u_k\|_Y \le \sum_{k=n}^{\infty} \frac{C_L}{\theta C_D} (E(u_k)^\theta-E(u_{k+1})^\theta) = \frac{C_L}{\theta C_D}E(u_n)^\theta \\
        & \le \frac{C_L}{\theta C_D} \left(\frac{C_L}{C_S} \|u_{n+1}-u_n\|_Y\right)^{\frac{\theta}{1-\theta}} = \frac{C_L}{\theta C_D} \left(\frac{C_L}{C_S} (r_n-r_{n+1})\right)^{\frac{\theta}{1-\theta}},
    \end{align*}
    where the first inequality is due to (\ref{eq:proof1}) and the second inequality is due to (\ref{eq:proof2}). This gives
    \begin{align*}
        r_{n+1} \le r_n - C r_n^{\frac{1-\theta}{\theta}}, \quad C := C_L^{-\frac{1}{\theta}}(\theta C_D)^{\frac{1-\theta}{\theta}}C_S.
    \end{align*}
    Note that here $0<C<1$, otherwise the sequence would have converged in finite steps.
    
    If $\theta \in (0,\frac{1}{2})$, let $s_n := s_0 n^{-\gamma}$, $\gamma = \frac{\theta}{1-2\theta}$, and $s_0\ge\text{max}\{r_0, (C/\gamma)^{-\gamma}\}$. Then
    \begin{align*}
        s_{n+1} = s_n\left(1+\frac{1}{n}\right)^{-\gamma} \ge s_n\left(1-\frac{1}{n}\cdot \gamma\right) = s_n \left(1 - \gamma s_0^{-1/\gamma} s_n^{1/\gamma}\right) \ge s_n - C s_n^{\frac{\gamma+1}{\gamma}} = s_n - Cs_n^{\frac{1-\theta}{\theta}}.
    \end{align*}
    Combining $s_0\ge r_0$, $r_{n+1} \le r_n - C r_n^{\frac{1-\theta}{\theta}}$, and $ s_{n+1} \ge s_n - Cs_n^{\frac{1-\theta}{\theta}}$, by induction,
    \begin{align*}
        r_n \le s_n = s_0 n^{-\frac{\theta}{1-2\theta}} \quad \forall n,
    \end{align*}
    which is polynomial (or sub-linear) convergence.
    
    If $\theta = \frac{1}{2}$, then $r_{n+1} \le (1-C)r_n$, and 
    \begin{align*}
        r_n \le r_0 e^{c n}, \quad c:= \text{ln}(1-C),
    \end{align*}
    which is exponential (or linear) convergence.
\end{proof}

The above result can be seen as a generalization of Theorem 2.3 in \cite{schneider2015convergence} to the Hilbert space/manifold. Another work in this direction is \cite{frankel2015splitting}. What is new in our version is that one has the freedom to choose mixed norms ($\|\cdot\|_X$ and $\|\cdot\|_Y$), as long as the conditions (\ref{eq:L}), (\ref{eq:D}) and (\ref{eq:S}) can be satisfied under these norms. One example is the $\|\cdot\|_{a_u}$ in this paper, which varies with $u$. 

The advantage of the {\L}ojasiewicz inequality approach is that instead of dealing with the time discretization of the gradient flow, it gives the convergence of the gradient descent directly. The triplet of conditions (\ref{eq:L}), (\ref{eq:D}) and (\ref{eq:S}) in Theorem \ref{thm:LDS} all have clear and intuitive meanings. In fact, it is easier to deduce the convergence property of the gradient flow from that of the gradient descent, since we only need to take the limit $\tau \to 0^+$; while the reverse direction from gradient flow to gradient descent can be more difficult.

    An important observation is that the exponent $\theta$ in {\L}ojasiewicz gradient inequality indicates the \emph{degree of polynomial} of the objective function. For example, consider $x\in \R$, let $f(x) = x^k$ for a positive integer $k$, then {\L}ojasiewicz gradient inequality holds with $\theta = 1/k$. From this viewpoint, exponential convergence is closely related to certain quadratic-like behavior of the objective functional. It is thus unusual for a quartic-quadratic functional $E(\cdot)$ (i.e. a functional which is the sum of nonnegative quartic and quadratic terms) to have exponential convergence rate. What the Sobolev gradient does is to force the quartic term to behave like quadratic. This is the idea behind the proof of Theorem \ref{thm:GP-L}.

\section{Exponential convergence of Sobolev gradient descent}
\label{sec:main}

In this section, we establish the convergence rate of the $a_u$-Sobolev gradient descent for Problems (\ref{eq:eig_original}) and (\ref{eq:E}). In Section \ref{sec:BEC_intro}, we introduce the setting of manifold optimization and derive the $a_u$-Sobolev gradient descent method. In Section \ref{sec:LDS_GP}, using the {\L}ojasiewicz inequality tool from the previous section, we prove the exponential convergence rate by checking conditions (\ref{eq:L}), (\ref{eq:D}) and (\ref{eq:S}) for this specific method. 

\subsection{Manifold setting and derivation of \texorpdfstring{$a_u$}{Au}-Sobolev gradient descent.}
\label{sec:BEC_intro}

The following assumptions on $\Omega$, $V$ and $\beta$ will be required throughout this section.

\begin{assumption}
\label{ass:GP}
    Let $\Omega$, $V$ and $\beta$ be chosen such that the following assumptions hold:
    \begin{itemize}
        \item $\Omega$ is a bounded domain in $\R^d$, $d=1,\,2$, or $3$, and $\Omega$ is either convex Lipschitz or has a smooth boundary;
        \item $V\ge 0$ and $V\in L^\infty(\Omega)$, $V$ is a trapping potential, and $\beta\ge 0$. 
        
    \end{itemize}
\end{assumption}

\begin{remark}
    $V$ is chosen as a trapping potential so that the eigenstates of interest are localized. It is then natural to impose zero Dirichlet boundary conditions on $\partial \Omega$. Examples of a trapping potential include the well model in the classical Anderson localization where $\lim_{|x|\to\infty}V(x)=+\infty$, and the fully disordered model with high contrast and small interaction length.
\end{remark}

Define the infinite dimensional Hilbert manifold $\M$ as 
\begin{align*}
    \M := \{u\in H_0^1(\Omega) : \|u\|_{L^2(\Omega)} = 1, \, \zzy{\|u\|_{L^{\infty}(\Omega)} \le M_0} \text{ for some global constant } M_0\}.
\end{align*}
Then $\M$ is a submanifold in $H_0^1(\Omega) \cap \zzy{L^{\infty}(\Omega)}$. Note that although the original problem (\ref{eq:eig_original}) allows $v(x)\in\C$, we restrict our search to $u(x)\in \R$, as we will see that the existence of a real and positive ground state is ensured by Theorem \ref{thm:ground}. We also remark that $\|u\|_{L^{\infty}(\Omega)} \le M_0$ \rev{is not directly guaranteed by the iterative algorithm, but is rather left as an assumption.} It is a plausible assumption because we will see that the ground state $v$ is in ${L^{\infty}(\Omega)}$ by H\"older 
continuity in Theorem \ref{thm:ground}. 

For simplicity we drop $\Omega$ in norm and inner product notations when there is no confusion. 
The \emph{tangent space} of $\M$ at point $u\in\M$ is defined as
\begin{align}
\label{eq:tangent}
    \zzy{\mathcal{T}_u\M = \{\xi \in H_0^1(\Omega)\cap L^\infty(\Omega):\, (\xi, u)_{L^2} = 0\}.}
\end{align}
\zzy{We need an inner product in the tangent space, denoted as $(\cdot, \cdot)_X$. On the finite dimensional Riemannian manifold, this is dubbed the \emph{Riemannian metric}. It can be easily generalized to the infinite dimensional Hilbert manifold.}

For $u\ne 0$, the \emph{retraction} of $u$ onto $\M$ is given by
\begin{align*}
    R(u) = {u}/{\|u\|_{L^2}}.
\end{align*}
Note that the retraction operation itself is independent of the choice of the \zzy{inner product} $(\cdot, \cdot)_X$, but its approximation property is not. When the \zzy{inner product} $(\cdot, \cdot)_X$ is introduced, it is usually required that the retraction is at least first-order, i.e., $R(z+\xi) = z + o(\|\xi\|_X)$ for $z\in\M$ and $\xi \in \mathcal{T}_u\M$.

Given an inner product $(\cdot, \cdot)_X$, let $\G$ be its \rev{associated Greens operator}, i.e.,
\begin{align*}
    (z, \G w)_X = (z, w)_{L^2}, \qquad \forall \, z, w \in X.
\end{align*}
For an arbitrary element $\xi$ in the ambient space, the \emph{projection onto the tangent space} at point $u \in \M$ is given by
\begin{align*}
    P_{\mathcal{T}_u\M}(\xi) = \xi - \frac{(\xi,u)_{L^2}}{(\G u,\G u)_X}\G u.
\end{align*}

Given a differentiable function $E(u)$ defined on $\M$, the \emph{Sobolev gradient} of $E(u)$ with respect to the \zzy{inner product} $(\cdot, \cdot)_X$ is the unique element $\nabla_X E(u) \in X$ such that
\begin{align*}
    (\nabla_X E(u), w)_X = (\nabla E(u), w)_{L^2}, \qquad \forall \, w\in X.
\end{align*}
The \emph{manifold gradient} of $E(u)$ on $\M$, denoted as $\text{grad}E(u)$, is the projection of the Sobolev gradient onto the tangent space with respect to the \zzy{inner product} $(\cdot, \cdot)_X$. Thus we have
\begin{align*}
    \text{grad }E(u) = P_{\mathcal{T}_u\M}(\nabla_X E(u)) = \nabla_X E(u) - \frac{(\nabla_X E(u),u)_{L^2}}{(\G u,\G u)_X}\G u.
\end{align*}
It can be inferred from the above expression that $\text{grad }E(u) = 0$ implies $\nabla E(u) = \lambda u$ for some scalar $\lambda$. If $E(u)$ is as in (\ref{eq:E}), then $u$ is an eigenstate of (\ref{eq:eig_original}). This fact is independent of the choice of \zzy{inner product} $(\cdot, \cdot)_X$.

The choice of the \zzy{inner product in the tangent space} plays an important role in the analysis of manifold optimization algorithms as different \zzy{inner products} give different forms of gradient flow and gradient descent algorithms. Popular choices include $L^2$, $H^1$, and the $a_0$ inner product defined as follows:
\begin{align*}
    (z, w)_{a_0} := \int_{\Omega} \nabla z \nabla w + Vzw, \qquad \forall\, z,\,w \in \T_u\M,\quad u\in\M.
\end{align*}
All the above \zzy{inner products} are fixed everywhere on the manifold. Things become interesting when the inner product becomes \rev{adapted} to $u$. Specifically, we are interested in the following inner product
\begin{align}
    (z,w)_{a_u} := \int_{\Omega} \nabla z \nabla w + V z w + \beta |u|^2 z w, \qquad \forall\, z,\,w \in \T_u\M, \quad u\in\M,
\end{align}
and we define
\begin{align}
    \qquad \A_u := -\Delta + V + \beta|u|^2,
\end{align} 
such that $(\A_u z, w)_{L^2} = (z,w)_{a_u}$ for any  $z$, $w$.
This new \zzy{inner product} $(\cdot,\cdot)_{a_u}$ can be seen as the linearization of the Gross-Pitaevskii energy functional. A desirable property of this \zzy{inner product} is that the Sobolev gradient of $E(u)$ is $u$ itself, i.e.,
\begin{align}
    \nabla_{a_u} E(u) = u.
\end{align}
This inner product has the \rev{associated Greens operator} $\G_u$ whose properties have been explored in \cite{henning2020sobolev}.

\begin{lemma}
    Under the adaptive \zzy{inner product} $(\cdot, \cdot)_{a_u}$, the retraction $R$ is second-order.
\end{lemma}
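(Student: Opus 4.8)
The plan is to verify the second-order property directly from the explicit normalization formula, reducing everything to a one-dimensional Taylor expansion of a scalar factor. Fix $u\in\M$ and a tangent vector $\xi\in\T_u\M$; by the definition of the tangent space $(\xi,u)_{L^2}=0$, and since $\|u\|_{L^2}=1$ this yields the exact identity $\|u+\xi\|_{L^2}^2=1+\|\xi\|_{L^2}^2$. Hence $R(u+\xi)=(u+\xi)(1+\|\xi\|_{L^2}^2)^{-1/2}$, so the entire computation collapses to expanding the scalar $f(\xi):=(1+\|\xi\|_{L^2}^2)^{-1/2}$ around $\xi=0$.

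Writing the deviation of the retraction from the straight tangent step as $R(u+\xi)-(u+\xi)=\phi\,(u+\xi)$ with scalar $\phi:=f(\xi)-1=-\tfrac12\|\xi\|_{L^2}^2+O(\|\xi\|_{L^2}^4)$, the key observation is that this deviation splits as $\phi\,u+\phi\,\xi$. The first piece $\phi u$ is a scalar multiple of $u$, hence $L^2$-orthogonal to all of $\T_u\M$, and is of order $\|\xi\|_{L^2}^2$; the second piece $\phi\xi$ lies in $\T_u\M$ and is of order $\|\xi\|_{L^2}^2\,\|\xi\|_{a_u}$, i.e. cubic. Equivalently, the retraction curve $c(t):=R(u+t\xi)$ satisfies $c(0)=u$, $c'(0)=\xi$, and $c''(0)=-\|\xi\|_{L^2}^2\,u$, which is parallel to $u$. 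This is exactly the second-order condition: the acceleration of the retracted curve is normal to $\T_u\M$ (it points along $u$, the direction produced by the $L^2$-normalization constraint), so the tangential part of the deviation is $o(\|\xi\|^2)$.

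To express these orders in the norm the method actually uses, I would invoke the uniform equivalence $\|\xi\|_{a_u}\sim\|\xi\|_{H^1_0}$ on $\M$, which follows from Poincar\'e's inequality together with $V\in L^\infty(\Omega)$ and $\|u\|_{L^\infty}\le M_0$, and the embedding $\|\xi\|_{L^2}\lesssim\|\xi\|_{a_u}$. These give $\|\phi u\|_{a_u}=O(\|\xi\|_{a_u}^2)$ for the normal part and $\|\phi\xi\|_{a_u}=O(\|\xi\|_{a_u}^3)=o(\|\xi\|_{a_u}^2)$ for the tangential part; all the integrals involved are finite because $u,\xi\in L^\infty(\Omega)$ on a bounded domain. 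Combining this with $c'(0)=\xi$ (which already gives the first-order property) completes the verification.

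The one genuinely delicate point, and where I expect the main obstacle to lie, is the interplay between the retraction and the adaptive inner product, i.e. the clause ``under $(\cdot,\cdot)_{a_u}$.'' The argument above uses \emph{only} the $L^2$ structure defining $\M$ and its tangent spaces: the normal direction generated by the retraction is $u$, which is $L^2$-orthogonal to $\T_u\M$. Switching the tangent-space inner product from a fixed one to the adaptive $a_u$ does not change the constraint manifold, its tangent spaces, or the normalization, so the second-order property is inherited unchanged. I would state explicitly that ``normal to $\T_u\M$'' must be read in the $L^2$ sense natural to the sphere constraint, since at a generic $u$ the vector $u$ need not be $a_u$-orthogonal to $\T_u\M$ (the $a_u$-normal direction is spanned by $\G_u u=\A_u^{-1}u$); the two notions coincide precisely when $u$ is an eigenfunction of $\A_u$, in particular at the ground state where $\A_v v=\lambda v$, which is the regime relevant to the convergence analysis. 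Making this distinction precise, rather than the elementary expansion, is the part of the proof requiring care.
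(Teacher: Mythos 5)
Your proposal is correct and follows essentially the same route as the paper: the exact identity $\|u+\xi\|_{L^2}^2=1+\|\xi\|_{L^2}^2$ from $(\xi,u)_{L^2}=0$, a Taylor expansion of the scalar normalization factor $(1+\|\xi\|_{L^2}^2)^{-1/2}$, and the Poincar\'e inequality to convert $\|\xi\|_{L^2}$ into $\|\xi\|_{a_u}$. Your extra normal/tangential decomposition and the acceleration computation $c''(0)=-\|\xi\|_{L^2}^2\,u$ go beyond the paper, which only establishes the total-deviation bound $\|R(u+\xi)-(u+\xi)\|_{a_u}=\mathcal{O}(\|\xi\|_{a_u}^2)$ (the property actually invoked later in Theorem \ref{thm:GP-D}), but the core argument is identical.
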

\begin{proof}
    \rev{For $u\in\M$ and for any $\xi \in \mathcal{T}_u\M$,
    \begin{align*}
        \frac{\|R(u+\xi)-(u+\xi)\|_{a_u}}{\|u+\xi\|_{a_u}} 
        = \frac{\|(1-1/\|u+\xi\|_{L^2})(u+\xi)\|_{a_u}}{\|u+\xi\|_{a_u}} = \left|1- \frac{1}{\|u+\xi\|_{L^2}}\right|.
    \end{align*}
    Note that $\xi$ is a tangent vector of the manifold at $u$. By (\ref{eq:tangent}), $\|u+\xi\|_{L^2}^2 = \|u\|_{L^2}^2 + \|\xi\|_{L^2}^2 + 2(\xi, u)_{L^2} = 1+\|\xi\|_{L^2}^2$. Thus we have
    \begin{align*}
        \frac{\|R(u+\xi)-(u+\xi)\|_{a_u}}{\|u+\xi\|_{a_u}}= \left|1- (1+\|\xi\|_{L^2}^2)^{-1/2}\right| = \frac{1}{2}\|\xi\|_{L^2}^2 + \mathcal{O}(\|\xi\|_{L^2}^4).
    \end{align*}    }
    By the Poincar\'e inequality, when $V\ge 0$ and $\beta\ge 0$,
    \begin{align*}
        \|\xi\|_{L^2}^2 \le C_P \|\nabla \xi\|_{L^2}^2 \le C_P \|\xi\|_{a_u}^2
    \end{align*}
    for some domain constant $C_P>0$.
    Thus we have
    \begin{align*}
        \|R(u+\xi)-(u+\xi)\|_{a_u} = \mathcal{O}(\|\xi\|_{a_u}^2),
    \end{align*}
    where the constant in $\O(\cdot)$ is independent of $\xi$.
\end{proof}

Using the \zzy{inner product} $(\cdot, \cdot)_{a_u}$, the manifold gradient becomes
\begin{align}
\label{eq:grad}
    \text{grad } E(u) = u - \frac{(u,\,u)_{L^2}}{(\G_{u}u,\,\G_{u} u)_{a_{u}}}\G_{u} u.
\end{align}
We now have the Sobolev projected gradient descent (Sobolev PGD) as in (\ref{eq:SPGD}):
\begin{align}
\begin{split}
    u_{n+1} &= R\left(u_n - \tau_n\cdot \text{grad }E(u_n)\right) \\
    &= R\left(\left(1-\tau_n\right)u_n + \tau_n \cdot \frac{(u_n,u_n)_{L^2}}{(\G_{u_n} u_n,\G_{u_n} u_n)_{a_{u_n}}}\G_{u_n} u_n\right).
\end{split}
\end{align}


\subsection{Asymptotic convergence and exponential rate.}
\label{sec:LDS_GP}

Throughout the rest of the paper, let $v$ always denote the global minimizer of $E(u)$, i.e. the ground state of the nonlinear eigenproblem. Let $\lambda$ always denote its corresponding eigenvalue. We have the following basic observations about the ground state $v$.

\begin{theorem}
\label{thm:ground}
    There is a ground state $v$ that satisfies $v(x)>0$ everywhere on $\Omega$. It is the only strictly positive eigenstate of (\ref{eq:eig_original}) up to scaling. Moreover, it is both the \emph{unique} ground state of the nonlinear eigenproblem (\ref{eq:eig_original}) and the \emph{unique} ground state of the linearized operator $\mathcal{A}_v$ up to the sign. Moreover, $v$ has H\"older regularity $v\in C^{0,\alpha}(\bar{\Omega})$ for some $0<\alpha<1$.
\end{theorem}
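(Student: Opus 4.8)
\textbf{Proof proposal for Theorem \ref{thm:ground}.}

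The plan is to establish the existence of a strictly positive ground state, then uniqueness, then the ``double ground state'' identification, and finally the H\"older regularity, roughly in that order. First I would address existence and positivity. The energy functional $E(u)$ in (\ref{eq:E}) is bounded below on $\M$ (all three terms are nonnegative since $V\ge 0$ and $\beta\ge 0$), and it is coercive in the $H_0^1$-norm by the Poincar\'e inequality. Taking a minimizing sequence and using the weak lower semicontinuity of the $H_0^1$-seminorm together with the compact embedding $H_0^1(\Omega)\hookrightarrow L^2(\Omega)$ (valid since $\Omega$ is bounded with the stated boundary regularity, so Rellich--Kondrachov applies), I would extract a weakly convergent subsequence whose limit $v$ lies in $\M$ and attains the infimum. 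The key trick for positivity is that $E(|u|) \le E(u)$, because $|\nabla |u|| = |\nabla u|$ a.e. and the potential and interaction terms depend only on $|u|$; hence one may replace any minimizer by its modulus, so there is a nonnegative minimizer. Strict positivity $v>0$ on $\Omega$ then follows from the strong maximum principle (or Harnack inequality) applied to the Euler--Lagrange equation (\ref{eq:eig_original}), viewing it as $-\Delta v + (V + \beta|v|^2 - \lambda)v = 0$ with a bounded zeroth-order coefficient.

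Next I would prove uniqueness of the positive ground state and its characterization as the ground state of $\A_v$. The main tool is the observation that $v$ solves $\A_v v = \lambda v$, i.e.\ $v$ is an eigenstate of the linear Schr\"odinger operator $\A_v = -\Delta + V + \beta|v|^2$ with eigenvalue $\lambda$. Since $v>0$, and the ground state (lowest eigenvalue) of a Schr\"odinger operator with Dirichlet boundary conditions is the \emph{unique} eigenfunction that does not change sign (by Perron--Frobenius / Krein--Rutman theory for the associated positive semigroup, or by the strict concavity argument below), the strictly positive eigenstate $v$ must in fact be the ground state of $\A_v$, establishing the ``double ground state'' property. For uniqueness of the nonlinear ground state I would use the convexity structure under the change of variables $\rho = u^2$: the map $\rho \mapsto E(\sqrt{\rho})$ is convex (strictly convex on the relevant term, since $\int|\nabla\sqrt{\rho}|^2$ is convex in $\rho$ and $\int \frac{\beta}{2}\rho^2$ is strictly convex), so the minimizer over $\{\rho\ge 0, \int\rho = 1\}$ is unique. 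This forces the positive minimizer $v=\sqrt\rho$ to be unique up to sign, and combined with positivity rules out any other strictly positive eigenstate.

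Finally, for H\"older regularity I would bootstrap from the equation. Since $v\in H_0^1(\Omega)$ and $d\le 3$, Sobolev embedding gives $v\in L^6$, so $|v|^2 v \in L^2$ and $Vv\in L^2$; elliptic regularity then yields $v\in H^2(\Omega)$ (using the convex-Lipschitz or smooth boundary hypothesis to get $H^2$ up to the boundary). For $d\le 3$, $H^2(\Omega)\hookrightarrow C^{0,\alpha}(\bar\Omega)$ by Morrey's embedding, giving the claimed H\"older continuity. I expect the main obstacle to be the uniqueness argument: establishing that the positive minimizer coincides with the $\A_v$-ground state and that no competing positive eigenstate exists requires care, since two different positive eigenstates $v_1, v_2$ would generate two \emph{different} linearized operators $\A_{v_1}\ne\A_{v_2}$, so one cannot directly invoke uniqueness of a fixed linear ground state. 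The convexity-in-$\rho$ argument is what circumvents this, but verifying strict convexity and its consequences rigorously (in particular handling the nondifferentiability of $\sqrt{\rho}$ where $\rho$ vanishes, and justifying the change of variables on $H_0^1$) is the delicate part of the proof.
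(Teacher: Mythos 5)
Your proposal is correct, and its skeleton coincides with the paper's outline (which defers the details to Lemma 2 of Cans et al.\ and Lemmas 5.3--5.4 of Henning--Peterseim): existence of a minimizer, nonnegativity from $E(|u|)=E(u)$, strict positivity via Harnack, and the ``double ground state'' identification via the fact that a strictly positive eigenfunction of the Schr\"odinger operator $\mathcal{A}_v$ must be its simple ground state. You diverge in two steps, though. For uniqueness of the positive eigenstate, the paper's proof invokes the Picone identity (or, alternatively, the observation that any $u$ that is itself the ground state of $\mathcal{A}_u$ must minimize $E$), whereas you use the hidden convexity of $\rho \mapsto E(\sqrt{\rho})$. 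Both are valid; your convexity route has the advantage of simultaneously delivering uniqueness of the minimizer and upgrading every positive eigenstate to a global minimizer (first-order conditions are sufficient for convex problems), at the price of the delicacy you correctly flag --- justifying differentiability of $\rho\mapsto\int|\nabla\sqrt{\rho}|^2$ where $\rho$ vanishes. This is essentially the argument of the cited Cans et al.\ reference, while Henning--Peterseim chose Picone precisely to avoid the change of variables. For H\"older regularity, the paper cites De Giorgi--Nash--Moser theory (Gilbarg--Trudinger, Theorem 8.24), whereas you bootstrap $v\in L^6 \Rightarrow -\Delta v \in L^2 \Rightarrow v\in H^2(\Omega) \hookrightarrow C^{0,\alpha}(\bar\Omega)$ by Morrey; your route leans on the convex-Lipschitz or smooth boundary hypothesis for global $H^2$ regularity (which is assumed, so this is fine) and in fact yields the stronger conclusion $v\in H^2(\Omega)$.

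One presentational caveat: your Harnack step requires the zeroth-order coefficient $V+\beta v^2-\lambda$ to be bounded, i.e.\ $v\in L^\infty(\Omega)$, which in your ordering only becomes available in the final regularity step. Since your regularity bootstrap nowhere uses positivity, this is not circular --- but you should run the regularity argument \emph{before} the positivity argument (as the paper's outline does) rather than leave it as a forward reference.
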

\begin{proof}
    This theorem is a consequence of Lemma 2 in \cite{cances2010numerical} and Lemmas 5.3 and 5.4 in \cite{henning2020sobolev}. We only outline the main idea of the proof here to make this paper self-contained.
    
    The idea is that the existence of at least one global minimizer $v$ is ensured by the convexity of $E(u)$. The H\"older continuity of $v$ is ensured by elliptic regularity, see e.g. \cite[Theorem~8.24]{gilbarg2015elliptic}. This $v$ can always be chosen to be nonnegative because $E(u) = E(|u|)$. This nonnegativity can be made into positivity by applying the Harnack inequality to $(A_v-\lambda)$, see e.g. \cite[Corollary~8.21]{gilbarg2015elliptic}. Thus, there exists a ground state of the nonlinear problem that is positive. The same argument shows that the ground state eigenfunction of the linearized operator $\A_v$ is also positive and is unique. Since $v$ is an eigenfunction of $\A_v$ and is positive, it is exactly that ground state. Thus we have the ``double ground state'' property. Finally, the uniqueness of any positive eigenstate of the original nonlinear eigenproblem can be established by contradiction. This can be done either by the Picone identity as in \cite{henning2020sobolev}, or by showing that as long as some $u$ itself is the ground state of the linearized operator $\A_u$, it must be the ground state of the original problem.
\end{proof}

It turns out in subsequent results that $v$ being the ``double'' ground state in Theorem \ref{thm:ground} is essential to the exponential convergence rate. 

\begin{lemma}
\label{lemma:nonnegative}
    If the initial point $u_0$ of the Sobolev PGD satisfies $u_0>0$ everywhere on $\Omega$, then $\{u_n\}_{n=0}^\infty$ generated by the Sobolev PGD with step size $\tau_{\text{min}}\le\tau_n\le \tau_{\text{max}}$ \zzy{for some $0<\tau_{min}\le \tau_{\text{max}}\le 1$} converges to the ground state $v$ strongly in $H^1(\Omega)$. 
\end{lemma}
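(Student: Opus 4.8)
The plan is to show global convergence of the Sobolev PGD to the ground state $v$ by combining a sign-preservation argument with an energy-descent argument, and then identifying the limit using the ``double ground state'' property from Theorem \ref{thm:ground}. First I would establish that positivity is preserved along the iteration: if $u_n>0$ on $\Omega$, then $\G_{u_n} u_n>0$ as well, because $\G_{u_n}$ is the Greens function of the operator $\A_{u_n}=-\Delta+V+\beta|u_n|^2$, which is a positive self-adjoint Schr\"odinger operator, and its Greens function maps nonnegative data to positive functions (by the maximum principle / Harnack inequality, exactly as invoked in Theorem \ref{thm:ground}). Since the descent direction is a convex combination $(1-\tau_n)u_n + \tau_n c_n \G_{u_n} u_n$ with $c_n>0$ and $0<\tau_n\le 1$, and retraction $R$ only rescales by a positive constant, we get $u_{n+1}>0$. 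Hence the whole sequence stays in the positive cone of $\M$, and any subsequential limit is nonnegative.

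Next I would control the energy. The step-size choice $\tau_{\min}\le\tau_n\le\tau_{\max}\le 1$ should guarantee a strict energy decrease $E(u_{n+1})\le E(u_n)$, at least a sufficient-decrease inequality of the form $E(u_n)-E(u_{n+1})\gtrsim \|\mathrm{grad}\,E(u_n)\|_{a_{u_n}}^2$; this is a standard consequence of the second-order retraction (the preceding Lemma) together with smoothness of $E$ on the bounded manifold $\M$ (the $L^\infty$ bound $M_0$ keeps the quartic term and all relevant constants uniformly controlled). Since $E$ is bounded below by $E(v)$, the energies converge and $\|\mathrm{grad}\,E(u_n)\|_{a_{u_n}}\to 0$. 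Coercivity of $E$ (via the Poincar\'e inequality, giving $\|\nabla u_n\|_{L^2}^2\le E(u_n)\le E(u_0)$) yields a uniform $H^1$ bound, so $\{u_n\}$ is bounded in $H_0^1(\Omega)$ and admits a weakly convergent subsequence $u_{n_k}\rightharpoonup u^*$ in $H^1$, with strong $L^2$ convergence by compact embedding and $u^*\ge 0$, $\|u^*\|_{L^2}=1$.

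Then I would pass to the limit in the gradient condition. The vanishing of $\mathrm{grad}\,E(u_{n_k})$ means (using the expression (\ref{eq:grad}) and $\nabla_{a_u}E(u)=u$) that $u_{n_k}$ approaches a fixed point of the normalized map, which in the limit forces $\nabla E(u^*)=\lambda^* u^*$, i.e. $\A_{u^*}u^* = \lambda^* u^*$ for some Lagrange multiplier $\lambda^*$. Thus $u^*$ is a nonnegative eigenstate of the nonlinear problem. Since $u^*\ge 0$ and is nontrivial, the Harnack argument upgrades it to $u^*>0$, and by the uniqueness statement in Theorem \ref{thm:ground} the only strictly positive eigenstate is $v$, so $u^*=v$. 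To promote $L^2$ convergence to strong $H^1$ convergence I would use that $E$ (more precisely its principal $\|\nabla\cdot\|_{L^2}^2$ part) is weakly lower semicontinuous and that $E(u_{n_k})\to E(v)$; matching the limiting energy with the quadratic-gradient part pins down $\|\nabla u_{n_k}\|_{L^2}\to\|\nabla v\|_{L^2}$, which together with weak convergence gives norm convergence and hence strong $H^1$ convergence. A final subsequence argument (every subsequence has a further subsequence converging to the unique limit $v$) upgrades convergence of the full sequence.

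The main obstacle I anticipate is the descent inequality: verifying that the discrete step with $\tau_n\le 1$ genuinely decreases $E$ requires care because the retraction introduces a second-order error, and one must show this error is dominated by the first-order decrease uniformly in $n$. The uniform $L^\infty$ bound $M_0$ and the second-order property of $R$ under $(\cdot,\cdot)_{a_u}$ are precisely what make the relevant smoothness constants uniform, but one still has to argue that the adaptive inner product $a_{u_n}$ does not degenerate along the sequence, i.e. that $\A_{u_n}$ stays uniformly coercive and bounded. This uniform spectral control of the family $\{\A_{u_n}\}$ — ensuring $\G_{u_n}$ and the normalization constant $c_n=(u_n,u_n)_{L^2}/(\G_{u_n}u_n,\G_{u_n}u_n)_{a_{u_n}}$ remain bounded away from $0$ and $\infty$ — is the technical crux on which both the strict descent and the well-posedness of the limit passage rest.
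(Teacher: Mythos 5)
Your proposal is correct and follows essentially the same skeleton as the paper's proof: positivity preservation by induction, energy decay producing a (weak) cluster point $u^*$, identification of $u^*$ as a nonnegative eigenstate via the first-order condition, the Harnack upgrade to strict positivity, uniqueness of the positive eigenstate from Theorem \ref{thm:ground} forcing $u^*=v$, and finally an upgrade to strong $H^1$ convergence via compact embedding and convergence of the energy. The one place where you genuinely diverge is the key positivity step $\G_{u_n}u_n\ge 0$: you argue via the maximum principle, i.e.\ positivity of the Greens function of the Schr\"odinger operator $\A_{u_n}$, whereas the paper uses a variational trick: $\G_{u_n}u_n$ is the unique minimizer of
\begin{align*}
\phi(y) := (y,y)_{a_{u_n}} - 2(y,u_n)_{L^2},
\end{align*}
and since $u_n\ge 0$ one has $\phi(|y|)\le\phi(y)$ for all $y$, so the minimizer must be nonnegative. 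Both arguments are valid; the variational one is self-contained at the level of weak ($H_0^1$) solutions and needs no regularity to invoke a maximum principle, while yours gives strict positivity more directly. The obstacles you flag at the end are real but are resolved exactly as you anticipate: the uniform $L^\infty$ bound $M_0$ built into the definition of $\M$ gives the norm equivalence of Lemma \ref{lemma:equiv}, which keeps the family $\{\A_{u_n}\}$ uniformly coercive and bounded; the sufficient-decrease estimate you want is precisely the content of Theorem \ref{thm:GP-D} (Condition (\ref{eq:D})), and the paper's own proof of this lemma defers those quantitative details to \cite{henning2020sobolev}.
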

\begin{proof}
    \zzy{The proof is originally developed in \cite{henning2020sobolev} and we only outline its main idea here to make this paper self-contained. The key idea is to show that $u_n(x) \ge 0$ for all $n$ by induction.} Assume that $u_n\ge 0$, we will show that this implies $\mathcal{G}_{u_n}u_n\ge 0$, and with $\tau_n\le 1$ this implies $u_{n+1}\ge 0$. 
    
    Specifically, observe that $\mathcal{G}_{u_n} u_n$ is the unique minimizer of 
    \begin{align*}
        \phi(y) := (y,y)_{a_{u_n}} - 2(y,u_n)_{L^2}.
    \end{align*}
    Since $u_n\ge 0$, we have that $\phi(|y|)\le \phi(y)$ $\forall y$. This implies that the minimizer of $\phi(\cdot)$ is nonnegative because we can always take the absolute value of the variable without increasing the functional value. Thus, $\mathcal{G}_{u_n}u_n\ge 0$. We then use the fact that $u_{n+1}$ is the scaled weighted average of two nonnegative quantities:
    \begin{align*}
        \tilde{u}_{n+1} = (1-\tau_n)u_n+\tau_n\gamma_n\mathcal{G}_{u_n}u_n,\quad \gamma_n =\frac{(u_n,u_n)_{L^2}}{(\mathcal{G}_{u_n}u_n, \mathcal{G}_{u_n}u_n)_{a_{u_n}}}\ge 0, \quad u_{n+1} = \tilde{u}_{n+1}/\|\tilde{u}_{n+1}\|_{L^2}.
    \end{align*}
    Thus, we establish that $u_n\ge 0$ implies $u_{n+1}\ge 0$. Since $u_0>0$, we have that $u_n\ge 0$ for all $n$.
    
    The existence of a cluster point $u^*$ for $\{u_n\}$ can be ensured by energy decay. 
    This convergence to $u^*$ is in the sense of weak convergence in $H_0^1(\Omega)$. From the above induction, $u^*$ is nonnegative, and following an argument similar to that in Theorem \ref{thm:ground} we can show that it is all positive.
    
    Since the step size is lower-bounded, $u^*$ must be a fixed point of $E(u)$, where $\text{grad } E(u^*)=0$. As we mentioned above, $\text{grad }E(u^*) = 0$ implies $\nabla E(u^*) = \lambda u^*$ for some scalar $\lambda$, i.e., $u^*$ is an eigenstate of the eigenvalue problem (\ref{eq:eig_original}).
    From the uniqueness result of positive eigenstate in Theorem \ref{thm:ground} we know that it could only be the ground state $v$. Therefore, $\{u_n\}$ converges to $v$ itself. 
    
    Finally, the weak convergence in $H_0^1(\Omega)$ implies strong convergence in $L^p(\Omega)$ for $p<6$ by the Rellich-Kondrachov embedding. This would give the convergence of energy $\{E(u_n)\}$, and consequently strong convergence in $H^1(\Omega)$.
\end{proof}

Before proceeding to the proof of Conditions (\ref{eq:L}), (\ref{eq:D}) and (\ref{eq:S}), we first need some technical lemmas.

\begin{lemma}[Norm equivalence]
\label{lemma:equiv}
    Under Assumptions \ref{ass:GP}, there exist positive constants $C_E$, $\widetilde{C_E}$ depending only on $\beta$, $M_0$, $V$, and the domain $\Omega$, such that 
    \begin{align*}
        &C_E \|\cdot\|_{a_u} \le  \|\cdot\|_{a_0} \le  C_E^{-1}\|\cdot\|_{a_u},\\
        &\widetilde{C_E} \|\cdot\|_{a_u} \le  \|\cdot\|_{H^1} \le  \widetilde{C_E}^{-1}\|\cdot\|_{a_u}.
    \end{align*}
\end{lemma}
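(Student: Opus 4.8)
The plan is to prove both chains of inequalities by direct comparison of the underlying quadratic forms, exploiting the nonnegativity of $V$ and $\beta|u|^2$ together with the uniform bound $\|u\|_{L^\infty}\le M_0$ that holds for \emph{every} $u\in\M$. The crucial elementary observation is that $\|z\|_{a_u}^2 = \|z\|_{a_0}^2 + \beta\int_\Omega |u|^2 z^2$, so the $a_u$ and $a_0$ norms differ only by the manifestly nonnegative term $\beta\int_\Omega |u|^2 z^2$.

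For the first chain, the lower bound $\|z\|_{a_0}\le\|z\|_{a_u}$ is immediate, since the extra term is nonnegative. For the reverse inequality I would bound $\beta\int_\Omega |u|^2 z^2 \le \beta M_0^2 \|z\|_{L^2}^2$ using $\|u\|_{L^\infty}\le M_0$, and then apply the Poincaré inequality $\|z\|_{L^2}^2 \le C_P\|\nabla z\|_{L^2}^2 \le C_P\|z\|_{a_0}^2$ (valid because $V\ge 0$, so the $a_0$ form dominates $\|\nabla z\|_{L^2}^2$). This yields $\|z\|_{a_u}^2 \le (1+\beta M_0^2 C_P)\,\|z\|_{a_0}^2$, and setting $C_E := (1+\beta M_0^2 C_P)^{-1/2}\le 1$ produces both required inequalities simultaneously.

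For the second chain, comparing with $\|\cdot\|_{H^1}$, I proceed analogously. Writing $\|z\|_{a_u}^2 = \|\nabla z\|_{L^2}^2 + \int_\Omega (V+\beta|u|^2)z^2$, the upper bound follows from $V+\beta|u|^2 \le \|V\|_{L^\infty}+\beta M_0^2$, giving $\|z\|_{a_u}^2 \le \max(1,\|V\|_{L^\infty}+\beta M_0^2)\,\|z\|_{H^1}^2$. The reverse direction uses $\|\nabla z\|_{L^2}^2 \le \|z\|_{a_u}^2$ and Poincaré once more to control $\|z\|_{L^2}^2 \le C_P\|z\|_{a_u}^2$, producing $\|z\|_{H^1}^2 \le (1+C_P)\,\|z\|_{a_u}^2$. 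Taking $\widetilde{C_E}$ to be the smaller of the two reciprocal constants $\max(1,\|V\|_{L^\infty}+\beta M_0^2)^{-1/2}$ and $(1+C_P)^{-1/2}$ then closes the argument.

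No step is genuinely difficult; the whole proof is a routine comparison of bilinear forms. The one point that must be watched carefully is \emph{uniformity}: every constant must be independent of $u$, which is precisely why the $L^\infty$ constraint $\|u\|_{L^\infty}\le M_0$ built into the definition of $\M$ is indispensable — it is exactly what converts the $u$-dependent weight $\beta|u|^2$ into the $u$-independent bound $\beta M_0^2$. I would also record that all resulting constants depend only on $\beta$, $M_0$, $\|V\|_{L^\infty}$, and the domain Poincaré constant $C_P=C_P(\Omega)$, as the statement asserts.
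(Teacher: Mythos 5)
Your proposal is correct and follows essentially the same route as the paper's own proof: termwise comparison of the quadratic forms using the nonnegativity of $V$ and $\beta|u|^2$, the uniform bound $\|u\|_{L^\infty}\le M_0$ from the definition of $\M$, and the Poincar\'e inequality on $H_0^1(\Omega)$, with all constants depending only on $\beta$, $M_0$, $\|V\|_{L^\infty}$, and $C_P(\Omega)$. If anything, your treatment of the bound $\|z\|_{H^1}^2\le(1+C_P)\|z\|_{a_u}^2$ is cleaner than the corresponding step in the paper, which retains a spurious $\beta M_0^2$ term in a lower bound; but this affects only the size of the admissible constant, not the validity of the result.
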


\begin{proof}
    See Appendix \ref{sec:proof_lemma:equiv}.
\end{proof}

In the next two lemmas, let $\lambda_i$ and $\mu_i$ be the $i$-th smallest eigenvalues of $\A_v$ and $\A_u$ respectively, and $v_i$ and $w_i$ be their corresponding eigenfunctions satisfying $\|v_i\|_{L^2} = 1$ and $\|w_i\|_{L^2}$ (so that $v = v_1$, $\lambda = \lambda_1$). Theorem \ref{thm:ground} has ensured the uniqueness of the ground state. The fact that $\A_v$ only has point spectrum ensures that there is a positive gap $C_v$ between $\lambda_1$ and $\lambda_2$.

\zzy{
\begin{lemma}[Perturbation of eigenvalues and eigenfunctions]
\label{lemma:perturb}   
    Under Assumptions \ref{ass:GP}, there exists a positive constant $C = C(\beta,V,M_0,\Omega,\lambda_1,C_v)$, such that \rev{for all $u\in\M$ satisfying $\|u-v\|_{H^1}\le C$}, we have that $\|u-w_1\|_{L^2} \le s$ for some $s<1$.
\end{lemma}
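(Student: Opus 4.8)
The plan is to route everything through the triangle inequality $\|u-w_1\|_{L^2} \le \|u-v\|_{L^2} + \|v-w_1\|_{L^2}$. The first term is immediately controlled, since $\|u-v\|_{L^2}\le\|u-v\|_{H^1}\le C$. The real content is an \emph{eigenfunction perturbation} estimate: the positive ground state $w_1$ of the linearized operator $\A_u$ must stay $L^2$-close to $v=v_1$, the ground state of $\A_v$, when $u$ is $H^1$-close to $v$. Because a spectral gap $C_v$ is available, I would do this variationally rather than through resolvent/contour integrals, which keeps the argument self-contained and makes the dependence on $C_v$ explicit.

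First I would quantify the potential perturbation. Writing $\A_u-\A_v = \beta(|u|^2-|v|^2)$ and using $\bigl||u|^2-|v|^2\bigr|\le 2M_0|u-v|$ pointwise (both $u$ and $v$ obey the $L^\infty$ bound $M_0$, the latter by Theorem \ref{thm:ground}), Cauchy--Schwarz gives $\||u|^2-|v|^2\|_{L^2}\le 2M_0\|u-v\|_{L^2}\le 2M_0 C$. For $\phi\in H_0^1(\Omega)$ the form perturbation then satisfies $|(\A_u\phi,\phi)_{L^2}-(\A_v\phi,\phi)_{L^2}|\le \beta\||u|^2-|v|^2\|_{L^2}\|\phi\|_{L^4}^2$, and combining the Sobolev embedding $H^1\hookrightarrow L^4$ (valid for $d\le 3$) with the norm equivalence of Lemma \ref{lemma:equiv} upgrades this to a \emph{relative} form bound $\kappa\,(\A_v\phi,\phi)_{L^2}$ with $\kappa=\kappa(C)\to 0$ as $C\to 0$. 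Feeding $(1-\kappa)(\A_v\phi,\phi)\le(\A_u\phi,\phi)\le(1+\kappa)(\A_v\phi,\phi)$ into the min--max principle yields $|\mu_i-\lambda_i|\le \kappa\lambda_i$, so that for small $C$ the gap survives: with $\eta:=\kappa\lambda_2$ one has $\mu_2-\mu_1\ge C_v-2\eta>0$ and $\mu_2-\lambda_1\ge C_v-\eta$.

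Second I would extract the alignment of $v$ with $w_1$. Expanding $v=\sum_i c_i w_i$ in the orthonormal eigenbasis of $\A_u$ (so $c_1=(v,w_1)_{L^2}$ and $\sum_i c_i^2=1$), I compare two evaluations of $(\A_u v,v)_{L^2}$: on one hand it equals $\lambda_1+\epsilon_1$ with $|\epsilon_1|\le\beta\||u|^2-|v|^2\|_{L^2}\|v\|_{L^4}^2=:\epsilon_0(C)\to 0$; on the other hand $(\A_u v,v)_{L^2}=\sum_i\mu_i c_i^2\ge \mu_2-(\mu_2-\mu_1)c_1^2$. These give $c_1^2\ge \frac{\mu_2-\lambda_1-\epsilon_1}{\mu_2-\mu_1}\ge \frac{C_v-\eta-\epsilon_0}{C_v+2\eta}\to 1$ as $C\to 0$. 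Here I fix the sign of $w_1$ to be positive, which is legitimate because $\A_u=-\Delta+V+\beta|u|^2$ is a Schr\"odinger operator with nonnegative potential, so by the same Harnack/Perron--Frobenius argument invoked in Theorem \ref{thm:ground} its ground state is simple and strictly positive; then $c_1=(v,w_1)_{L^2}>0$ automatically. Consequently $\|v-w_1\|_{L^2}^2=2(1-c_1)\le 2(1-c_1^2)\le 2\,\frac{3\eta+\epsilon_0}{C_v+2\eta}\to 0$.

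Assembling the two pieces, $\|u-w_1\|_{L^2}\le C+\|v-w_1\|_{L^2}$ with both terms vanishing as $C\to 0$, so a sufficiently small $C=C(\beta,V,M_0,\Omega,\lambda_1,C_v)$ forces $\|u-w_1\|_{L^2}\le s$ for some $s<1$. I expect the main obstacle to be exactly the quantitative eigenfunction perturbation with an \emph{explicit} dependence on the gap $C_v$: since $H^1\not\hookrightarrow L^\infty$ for $d=2,3$, the potential difference $|u|^2-|v|^2$ is \emph{not} small in $L^\infty$, so $\A_u-\A_v$ cannot be treated as a small bounded perturbation. The argument must instead pass the smallness through $L^2$ and absorb the test function into the gap via the Sobolev embedding and Lemma \ref{lemma:equiv}, which is precisely why the hypothesis is posed in $H^1$ while the conclusion is only in $L^2$. (A resolvent/Riesz-projection route would also work but would require the same form-boundedness bookkeeping, so the variational argument is cleaner.)
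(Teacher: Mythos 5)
Your proof is correct, but it follows a genuinely different route from the paper's. The paper never passes through $\|v-w_1\|_{L^2}$ and a triangle inequality: it expands $u$ itself in the $\A_u$-eigenbasis, establishes the chain of eigenvalue estimates $\mu_1\le\lambda_1+\tfrac16 C_v$, $\lambda_1\le\mu_1+\tfrac16 C_v$, $\lambda_1+\lambda_2\le\mu_1+\mu_2+\tfrac16 C_v$, and $(u,u)_{a_u}-\lambda_1\le\tfrac1{12}C_v$, and combines them into $(u,u)_{a_u}-\mu_1\le\tfrac12(\mu_2-\mu_1)$, which forces $c_1^2\ge\tfrac12$ and hence $\|u-w_1\|_{L^2}\le\sqrt{2-\sqrt2}<1$ directly. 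To obtain those eigenvalue bounds the paper tests the quadratic forms on the specific functions $v$, $w_1$, $w_2$, and must therefore control $\|w_1\|_{L^\infty}$ and $\|w_2\|_{L^\infty}$ through elliptic regularity ($H^2$ estimates plus the embedding $H^2\hookrightarrow L^\infty$ for $d\le 3$, which is where the boundary assumptions in Assumptions \ref{ass:GP} enter). Your argument replaces all of that machinery by a single uniform relative form bound $\left|(\phi,\phi)_{a_u}-(\phi,\phi)_{a_v}\right|\le\kappa\,(\phi,\phi)_{a_v}$ (Cauchy--Schwarz, $H^1\hookrightarrow L^4$, Lemma \ref{lemma:equiv}) fed into Courant--Fischer, giving $|\mu_i-\lambda_i|\le\kappa\lambda_i$ for all $i$ at once with no elliptic regularity and no pointwise control of eigenfunctions; you then align $v$ (rather than $u$) with $w_1$ and conclude by the triangle inequality. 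Both routes deliver the uniform gap bound needed downstream when Lemma \ref{lemma:linear} is invoked in Theorem \ref{thm:GP-L} (the paper gets $\mu_2-\mu_1\ge\tfrac12 C_v$, you get $\mu_2-\mu_1\ge C_v-2\eta$), so nothing is lost. What each buys: your version is lighter on regularity hypotheses, fixes the sign of $w_1$ more cleanly via positivity of the ground state of $\A_u$ (the paper simply replaces $w_1$ by $-w_1$ when needed), and actually yields the stronger conclusion $\|u-w_1\|_{L^2}\to 0$ as $C\to 0$, so $s$ can be made arbitrarily small; the paper's hands-on testing produces fully explicit constants (e.g.\ $s\le\sqrt{2-\sqrt2}$) and is the template it reuses, with the extra gradient term, in the proof of Lemma \ref{lemma:HOI}, where your $L^4$-based form bound would need to be redone for the operator with the $\delta\,\nabla(uz)\nabla(uw)$ term.
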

}

\begin{proof}
    See Appendix \ref{sec:proof_lemma:perturb}.
\end{proof}

\begin{lemma}[Condition (\ref{eq:L}) for the linearized operator]
\label{lemma:linear}
    Let $\A: X \to X$ be a symmetric and positive definite linear operator on the Hilbert space with a bounded Greens operator $\G$. 
    Let $\mu_i$ denote the $i$-th smallest eigenvalue of $\A$, and $w_i$ be its corresponding (normalized) eigenfunction. Assume that $\mu_2 > \mu_1$. Then for any $u$ such that $\|u\|_{L^2}=1$ and $\|u-w_1\|_{L^2} \le {s} <1$, we have 
    \begin{align*}
        (u,u)_\A - (w_1, w_1)_\A \le C_L \left((u,u)_\A - \frac{1}{(u,\G u)_{L^2}}\right)
    \end{align*}
    for some constant $C_L$ that depends only on ${s}$, $\mu_1$ and $\mu_2$.
\end{lemma}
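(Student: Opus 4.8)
The plan is to diagonalize $\A$ and reduce the inequality to an elementary statement about weighted arithmetic and harmonic means. Since $\A$ is symmetric positive definite with bounded (and, for the intended application, compact) Greens function, its normalized eigenfunctions $\{w_i\}$ form an orthonormal basis, so I would write $u=\sum_i c_i w_i$ with $c_i=(u,w_i)_{L^2}$ and $\sum_i c_i^2=\|u\|_{L^2}^2=1$. Using $\A w_i=\mu_i w_i$ and $\G w_i=\mu_i^{-1}w_i$, the three quantities become
\begin{align*}
(u,u)_\A=\sum_i \mu_i c_i^2, \qquad (w_1,w_1)_\A=\mu_1, \qquad (u,\G u)_{L^2}=\sum_i \mu_i^{-1}c_i^2 .
\end{align*}
Writing $p_i:=c_i^2$ (a probability weight, $\sum_i p_i=1$), the left-hand side collapses, via $\sum_i p_i=1$, to $\sum_i(\mu_i-\mu_1)p_i=\sum_{i\ge 2}(\mu_i-\mu_1)p_i$, while the bracket on the right is exactly the gap $A-H$ between the weighted arithmetic mean $A:=\sum_i\mu_i p_i$ and the weighted harmonic mean $H:=(\sum_i\mu_i^{-1}p_i)^{-1}$ of the $\mu_i$. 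Thus the lemma is equivalent to the discrete inequality $\sum_{i\ge 2}(\mu_i-\mu_1)p_i\le C_L\,(A-H)$.

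Next I would exploit the exact identity
\begin{align*}
A-H=\frac{H}{2}\sum_{i,j}p_i p_j\frac{(\mu_i-\mu_j)^2}{\mu_i\mu_j},
\end{align*}
obtained by symmetrizing $A/H-1=\sum_{i,j}p_i p_j(\mu_i/\mu_j-1)$ under $i\leftrightarrow j$. This rewrites the right-hand side as a manifestly nonnegative ``variance'' and, crucially, exposes the dominant contribution. Discarding all terms except the cross terms with one index equal to $1$ gives $A-H\ge H\,p_1\sum_{j\ge 2}p_j(\mu_j-\mu_1)^2/(\mu_1\mu_j)$. Since $\mu_j\ge\mu_2>\mu_1$, I can bound $(\mu_j-\mu_1)/\mu_j\ge(\mu_2-\mu_1)/\mu_2$, so each summand is at least $\tfrac{\mu_2-\mu_1}{\mu_1\mu_2}(\mu_j-\mu_1)$, and the remaining sum reproduces exactly the left-hand side $\sum_{j\ge 2}(\mu_j-\mu_1)p_j$ up to the prefactor $H\,p_1(\mu_2-\mu_1)/(\mu_1\mu_2)$.

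Finally I would use the hypotheses to make the resulting constant depend only on $s,\mu_1,\mu_2$. The closeness condition gives $\|u-w_1\|_{L^2}^2=2-2c_1\le s^2$, hence $p_1=c_1^2\ge(1-s^2/2)^2>1/4$; and $H\ge\mu_1$ because the harmonic mean dominates the smallest value $\mu_1$, so $1/H\le 1/\mu_1$. Combining these yields $\sum_{i\ge 2}(\mu_i-\mu_1)p_i\le C_L(A-H)$ with $C_L=\mu_2\big/\big((1-s^2/2)^2(\mu_2-\mu_1)\big)$, which depends only on $s$, $\mu_1$, $\mu_2$ as required. The main obstacle I anticipate is not any single estimate but organizing the reduction cleanly: establishing the arithmetic--harmonic mean identity, recognizing that only the index-$1$ cross terms are needed, and checking that the leftover factor can be bounded below purely in terms of $s$, $\mu_1$, $\mu_2$ (in particular lower-bounding $p_1$ through the $L^2$-closeness and lower-bounding $H$ by $\mu_1$). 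A minor technical point is justifying the eigenfunction expansion when the spectrum need not be discrete; for the intended application $\G$ is compact so this is automatic, and in general one simply replaces the sums by the spectral measure of $\A$ without altering the argument.
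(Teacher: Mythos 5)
Your proof is correct, and it takes a genuinely different route from the paper's. The paper never fully diagonalizes: it splits $u$ and $\A$ into just two blocks, the $w_1$-component $u^{(1)}=P_{w_1}u$ and the orthogonal remainder $u^{(2)}$, bounds the Rayleigh quotients of the remainder by $\mu_2$ (and of $\G$ on that block by $\mu_2^{-1}$), and reduces the claim to the nonnegativity of an explicit rational function of the single scalar $\|u^{(2)}\|_{L^2}^2\le s^2$, arriving at the admissible constant $C_L\ge 1+\mu_2/\bigl((\mu_2-\mu_1)(1-s^2)\bigr)$. You instead expand in the full eigenbasis, recognize the bracket on the right-hand side as the gap $A-H$ between weighted arithmetic and harmonic means, and exploit the exact symmetrization identity $A-H=\tfrac{H}{2}\sum_{i,j}p_ip_j(\mu_i-\mu_j)^2/(\mu_i\mu_j)$, keeping only the index-$1$ cross terms; every step of this I checked, including the lower bounds $p_1\ge(1-s^2/2)^2$ and $H\ge\mu_1$, and your constant $C_L=\mu_2/\bigl((1-s^2/2)^2(\mu_2-\mu_1)\bigr)$ indeed depends only on $s,\mu_1,\mu_2$. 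Each approach buys something. The paper's two-block argument asks less of the spectrum: it only needs the gap above $\mu_1$ on $\{w_1\}^\perp$, not a complete eigenbasis, whereas your sums presuppose discreteness (though, as you note, passing to the spectral measure of $\A$ repairs this in general). In return, your identity-based argument is watertight at a point where the paper's chain is delicate: the paper derives the upper bound $(u,\G u^{(2)})_{L^2}\le\mu_2^{-1}\|u^{(2)}\|_{L^2}^2$ and then inserts it inside the term $-C_L/(u,\G u)_{L^2}$, which is increasing in $(u,\G u)_{L^2}$, so as written that substitution points in the wrong direction and would require the reverse estimate; your symmetrized variance representation never inverts an estimated quantity, so no such issue can arise. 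Your use of the closeness hypothesis is also slightly sharper (via $c_1\ge 1-s^2/2$ rather than $\|u^{(2)}\|_{L^2}^2\le s^2$), which is why your constant stays bounded as $s\to 1$ while the paper's blows up; both constants degenerate, as they must, only when $\mu_2\to\mu_1$.
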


\begin{proof}
    See Appendix \ref{sec:proof_lemma:linear}.
\end{proof}

Using the above technical lemmas, we are now ready to prove the following theorems. They show that the sequence $\{u_n\}$ generated by (\ref{eq:SPGD}) satisfies Conditions (\ref{eq:L}), (\ref{eq:D}) and (\ref{eq:S}).

The first theorem is on Condition (\ref{eq:L}) near the ground state $v$ of the nonlinear eigenproblem. It is the central one of the three theorems.
\begin{theorem}
\label{thm:GP-L} 
    Under Assumptions \ref{ass:GP}, Condition (\ref{eq:L}) is satisfied for $\|\cdot\|_X = \|\cdot\|_{a_u}$ and $\theta=\frac{1}{2}$ near the ground state $v$. In other words, there exists some constant $C>0$, such that for any $u$ in $\{u:\,u\in\M, \, E(u)\ge E(v),\, \|u-v\|_{H^1} \le C\}$, we have
    \begin{align*}
        \left|E(u)-E(v)\right|^{\frac{1}{2}} \leq C_L \|\text{grad } E(u)\|_{a_{u}}.
    \end{align*}
\end{theorem}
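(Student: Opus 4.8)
The plan is to reduce this nonlinear {\L}ojasiewicz inequality to the \emph{linear} gradient inequality of Lemma \ref{lemma:linear}, applied to the frozen-coefficient operator $\A_u$, via two elementary but decisive identities. First I would make the right-hand side explicit. Writing $g = \G_u u = \A_u^{-1} u$ and using the Greens-function relations $(z, \G_u w)_{a_u} = (z,w)_{L^2}$ together with $\nabla_{a_u} E(u) = u$ and $(u,u)_{L^2} = 1$, the manifold gradient \eqref{eq:grad} becomes $\text{grad } E(u) = u - \frac{1}{(u, \G_u u)_{L^2}}\G_u u$, and a direct expansion of $\|\cdot\|_{a_u}^2$ collapses the cross terms to yield
\[
    \|\text{grad } E(u)\|_{a_u}^2 = (u,u)_{a_u} - \frac{1}{(u, \G_u u)_{L^2}}.
\]
This is precisely the quantity on the right-hand side of Lemma \ref{lemma:linear} with $\A = \A_u$.

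The heart of the argument is then to dominate $E(u) - E(v)$ by the linear spectral gap $(u,u)_{a_u} - \mu_1(u)$ of $\A_u$, where $\mu_1(u)$ is the smallest eigenvalue. Writing out both $(u,u)_{a_u}$, $(v,v)_{a_u}$ and the energies $E(u)$, $E(v)$ term by term, the gradient and potential contributions cancel identically while the quartic pieces combine into a perfect square, giving the identity
\[
    (u,u)_{a_u} - (v,v)_{a_u} = \big(E(u) - E(v)\big) + \frac{\beta}{2}\int_{\Omega} (u^2 - v^2)^2 \,\rd x .
\]
Since the correction term is nonnegative, $E(u) - E(v) \le (u,u)_{a_u} - (v,v)_{a_u}$. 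Using the variational characterization $\mu_1(u) = \min_{\|z\|_{L^2}=1}(z,z)_{a_u} \le (v,v)_{a_u}$, which is valid because $\|v\|_{L^2} = 1$, I obtain
\[
    E(u) - E(v) \le (u,u)_{a_u} - \mu_1(u) = (u,u)_{a_u} - (w_1, w_1)_{a_u},
\]
exactly the left-hand side of Lemma \ref{lemma:linear}. This step is where the adaptive inner product pays off: it is the concrete mechanism that makes the quartic term behave quadratically, since the quartic excess is absorbed into a nonnegative square.

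Finally I would apply Lemma \ref{lemma:linear} with $\A = \A_u$ to chain the two displays, $E(u) - E(v) \le (u,u)_{a_u} - (w_1,w_1)_{a_u} \le C_L\big((u,u)_{a_u} - \frac{1}{(u,\G_u u)_{L^2}}\big) = C_L \|\text{grad } E(u)\|_{a_u}^2$, whence $|E(u) - E(v)|^{1/2} \le C_L^{1/2}\|\text{grad } E(u)\|_{a_u}$ (the absolute value is redundant since $v$ is the global minimizer, so $E(u) \ge E(v)$). To invoke Lemma \ref{lemma:linear} I must check its hypotheses uniformly over $\{u \in \M : \|u-v\|_{H^1} \le C\}$: the operator $\A_u = -\Delta + V + \beta|u|^2$ is symmetric positive definite with compact resolvent on the bounded domain, hence has bounded Greens function and discrete spectrum, its ground state is simple so that $\mu_2(u) > \mu_1(u)$, and Lemma \ref{lemma:perturb} supplies the closeness $\|u - w_1\|_{L^2} \le s < 1$.

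The main obstacle I anticipate is not the two identities above, which are essentially algebraic, but making the constant $C_L$ \emph{uniform} over the neighborhood. Since the $C_L$ of Lemma \ref{lemma:linear} depends on $s$, $\mu_1(u)$ and $\mu_2(u)$, I need the low eigenvalues and, crucially, the spectral gap $\mu_2(u) - \mu_1(u)$ of $\A_u$ to stay controlled as $u$ ranges over $\{\|u-v\|_{H^1} \le C\}$. This is exactly where the perturbation estimates of Lemma \ref{lemma:perturb}, the norm equivalence of Lemma \ref{lemma:equiv}, and the positive gap $C_v$ between $\lambda_1$ and $\lambda_2$ of $\A_v$ are indispensable: the latter is guaranteed by the \emph{double ground state} property of Theorem \ref{thm:ground} (so that $v$, which is the natural comparison function here, is genuinely the ground state of $\A_v$), and continuity of the spectrum in $u$ keeps $\mu_2(u) - \mu_1(u)$ bounded below by a fixed fraction of $C_v$ near $v$, shrinking $C$ if necessary. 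This delivers a single $C_L$ valid throughout the neighborhood and completes Condition \eqref{eq:L} with $\theta = \frac{1}{2}$.
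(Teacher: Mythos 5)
Your proposal is correct and follows essentially the same route as the paper's proof: the identity $(u,u)_{a_u}-(v,v)_{a_u} = E(u)-E(v) + \frac{\beta}{2}\int_\Omega (u^2-v^2)^2\,\rd x$, the variational comparison $(w_1,w_1)_{a_u} = \mu_1 \le (v,v)_{a_u}$, the expansion $\|\text{grad } E(u)\|_{a_u}^2 = (u,u)_{a_u} - \frac{1}{(u,\G_u u)_{L^2}}$, and then Lemma \ref{lemma:linear} applied to $\A_u$ with the hypothesis $\|u-w_1\|_{L^2}\le s<1$ supplied by Lemma \ref{lemma:perturb}. Your closing concern about uniformity of $C_L$ is resolved exactly as you suggest — the eigenvalue bounds inside the proof of Lemma \ref{lemma:perturb} (in particular $\mu_2-\mu_1 \ge \frac{1}{2}C_v$ and $\mu_1 \le \lambda_1 + \frac{1}{6}C_v$ for $\|u-v\|_{H^1}$ small) make $C_L$ depend only on $\beta$, $V$, $M_0$, $\Omega$, $\lambda_1$, and $C_v$, which is precisely what the paper asserts.
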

\begin{proof}
    First notice that for any $u$ in the constraint set of the theorem, $E(u)-E(v) \le a_u(u,u) - a_u(v,v)$. This is because
    \begin{align*}
        E(u)-E(v) - \left((u,u)_{a_u} - (v,v)_{a_u}\right) &= -\frac{\beta}{2}\int_{\Omega} u^4 -\frac{\beta}{2}\int_{\Omega} v^4 +\beta\int_{\Omega} u^2v^2 \\
        &= -\frac{\beta}{2} \int_{\Omega} (u^2-v^2)^2 \le 0.
    \end{align*}
    Let $w_1$ be the eigenfunction corresponding to the smallest eigenvalue of $\A_u$, then 
    \begin{align*}
        (u,u)_{a_u} - (v,v)_{a_u} \le (u,u)_{a_u} - (w_1,w_1)_{a_u}.
    \end{align*}
    On the other hand, by (\ref{eq:grad}), we have
    \begin{align*}
        \|\text{grad } E(u)\|_{a_u}^2 = \left\|u - \frac{(u,\,u)_{L^2}}{(\G_{u}u,\,\G_{u} u)_{a_{u}}}\G_{u} u\right\|_{a_u}^2 = \left\|u - \frac{\Gu u}{(u,\mathcal{G}_u u)_{L^2}}\right\|_{a_u}^2 = (u,u)_{a_u} - \frac{1}{(u,\Gu u)_{L^2}}.
    \end{align*}
    It suffices to show that
    \begin{align}
    \label{eq:proof3}
        (u,u)_{a_u} - (w_1,w_1)_{a_u} \le C_L \left((u,u)_{a_u} - \frac{1}{(u,\Gu u)_{L^2}}\right),
    \end{align}
    which only involves the inner product $(\cdot, \cdot)_{a_u}$. 
    
    Using Lemma \ref{lemma:perturb}, we have that there exists $C>0$  
    such that when $\|u-v\|_{H^1}<C$, we have $\|u-w_1\|_{L^2} \le s$ for some constant $s<1$. Thus, Lemma \ref{lemma:linear} is applicable to $(\cdot, \cdot)_{a_u}$. This gives the above inequality on $(\cdot, \cdot)_{a_u}$, with a constant $C_L$ depending only on $\beta, \, V,\, M_0,\, \Omega,\,\lambda_1$, and $C_v$. The {\L}ojasiewicz inequality can thus be achieved.
\end{proof}

\begin{remark}
    The above proof of Condition (\ref{eq:L}) depends crucially on Lemma \ref{lemma:linear}. Lemma \ref{lemma:linear} can be seen as the version of the {\L}ojasiewicz inequality with $\theta=\frac{1}{2}$ for a linear operator $\A$. So its primary consequence is the linear convergence rate of the proposed algorithm to the ground state of a linear operator $\A$.
    
    The key idea of the proof Theorem \ref{thm:GP-L}, then, is to reduce it to the inequality (\ref{eq:proof3}). The inequality (\ref{eq:proof3}) only involves the operator $\A_u$, which is bilinear. Although $\A_u$ formally depends on $u$, the inequality (\ref{eq:proof3}) itself is not affected by nonlinearity. So Lemma \ref{lemma:linear} can be applied to prove (\ref{eq:proof3}). 
    
    Thus, one way to interpret the proof of Theorem \ref{thm:GP-L} is to view it as linearizing the nonlinear eigenproblem (\ref{eq:eig_original}) using the adaptive inner product $(\cdot,\cdot)_{a_u}$, so that it preserves the  {\L}ojasiewicz property with  $\theta=\frac{1}{2}$.
\end{remark}

The next theorem is on Condition (\ref{eq:D}) for the sequence generated by the proposed algorithm. 

\begin{theorem}
\label{thm:GP-D}
    Under Assumptions \ref{ass:GP}, Condition (\ref{eq:D}) is satisfied for $\|\cdot\|_X = \|\cdot\|_{a_u}$, $\|\cdot\|_Y = \|\cdot\|_{a_0}$ if $\{u_n\}$ is generated by the Sobolev projected gradient descent with step size $0<\tau_n\le \tau_{\text{max}}$ \zzy{for some $\tau_{\text{max}}>0$}, i.e., 
    \begin{align*}
        E(u_n) - E(u_{n+1}) \ge C_D \|\text{grad }E(u_n)\|_{a_{u_n}}\|u_n-u_{n+1}\|_{a_0}.
    \end{align*}
\end{theorem}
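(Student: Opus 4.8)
The plan is to prove (D) by sandwiching: bound the energy decrease $E(u_n)-E(u_{n+1})$ from below by $\tau_n\|\text{grad }E(u_n)\|_{a_{u_n}}^2$, and the step length $\|u_n-u_{n+1}\|_{a_0}$ from above by $c_2\tau_n\|\text{grad }E(u_n)\|_{a_{u_n}}$, so that (D) follows by division. Writing $g_n=\text{grad }E(u_n)$, I would start from the completion-of-squares identity already used in the proof of Theorem \ref{thm:GP-L}, now applied to the pair $(u_n,u_{n+1})$ with the weight frozen at $u_n$:
\begin{equation*}
E(u_n)-E(u_{n+1}) = \big[(u_n,u_n)_{a_{u_n}}-(u_{n+1},u_{n+1})_{a_{u_n}}\big] - \frac{\beta}{2}\int_\Omega (u_n^2-u_{n+1}^2)^2\,\rd x .
\end{equation*}
This replaces the quartic energy by the \emph{quadratic} form $\tilde E(w):=(w,w)_{a_{u_n}}$ plus a manifestly lower-order quartic remainder, which is the crux of the argument.

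For the quadratic part I would use two facts. First, since $\nabla E(u_n)$ and $\nabla\tilde E(u_n)$ both equal $\A_{u_n}u_n$ up to the fixed scaling, the same $g_n$ is also the $a_{u_n}$-manifold gradient of $\tilde E$ at $u_n$; a direct computation from $g_n=u_n-\Gun u_n/(u_n,\Gun u_n)_{L^2}$, the identity $\A_{u_n}\Gun=I$, and $g_n\perp u_n$ in $L^2$ gives $(u_n,g_n)_{a_{u_n}}=\|g_n\|_{a_{u_n}}^2$. Second, the retraction only rescales in $L^2$: with $\tilde u_{n+1}=u_n-\tau_n g_n$ one has $\|\tilde u_{n+1}\|_{L^2}^2 = 1+\tau_n^2\|g_n\|_{L^2}^2\ge 1$ (again because $g_n\perp u_n$), and $\tilde E$ is homogeneous of degree two, so $\tilde E(u_{n+1})=\tilde E(\tilde u_{n+1})/\|\tilde u_{n+1}\|_{L^2}^2\le \tilde E(\tilde u_{n+1})$. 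Expanding the quadratic then yields, with no smallness needed,
\begin{equation*}
(u_n,u_n)_{a_{u_n}}-(u_{n+1},u_{n+1})_{a_{u_n}} \ge \tilde E(u_n)-\tilde E(\tilde u_{n+1}) = \tau_n(2-\tau_n)\|g_n\|_{a_{u_n}}^2 \ge \tau_n\|g_n\|_{a_{u_n}}^2
\end{equation*}
for $\tau_n\le 1$; here the retraction actually \emph{helps}, since dividing by $\|\tilde u_{n+1}\|_{L^2}^2\ge1$ decreases $\tilde E$.

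It remains to absorb the quartic remainder and to bound the step. I would factor $u_n^2-u_{n+1}^2=(u_n-u_{n+1})(u_n+u_{n+1})$ and use the uniform $L^\infty(\Omega)\le M_0$ bound built into $\M$ together with Poincaré and Lemma \ref{lemma:equiv} to get $\tfrac{\beta}{2}\int_\Omega(u_n^2-u_{n+1}^2)^2\le C\|u_n-u_{n+1}\|_{a_0}^2$. For the step itself, the second-order retraction property proved above gives $u_{n+1}-u_n=-\tau_n g_n+\O(\tau_n^2\|g_n\|_{a_{u_n}}^2)$ in $\|\cdot\|_{a_{u_n}}$, whence by Lemma \ref{lemma:equiv} $\|u_n-u_{n+1}\|_{a_0}\le c_2\tau_n\|g_n\|_{a_{u_n}}$, with $c_2$ uniform because every iterate lies in $\M$ and $\|g_n\|_{a_{u_n}}^2\le(u_n,u_n)_{a_{u_n}}$ stays bounded along the monotonically decreasing energy. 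Substituting both into the identity gives $E(u_n)-E(u_{n+1})\ge \tau_n(1-C'\tau_n)\|g_n\|_{a_{u_n}}^2$; choosing $\tau_{\max}$ so that $C'\tau_{\max}\le\tfrac12$ and combining with the step bound produces (D) with a positive constant $C_D$.

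The main obstacle I anticipate is not any single estimate but securing that all constants ($c_2$, $C'$, and the norm-equivalence constants) are genuinely \emph{uniform in} $n$; this is exactly what forces the step-size ceiling $\tau_{\max}$ and relies on the $L^\infty(\Omega)\le M_0$ constraint in $\M$ and on energy monotonicity to keep $\{u_n\}$ in a fixed bounded set where Lemma \ref{lemma:equiv} applies with $n$-independent constants. A secondary delicate point is that the second-order retraction remainder must be controlled in the \emph{adaptive} norm $\|\cdot\|_{a_{u_n}}$ uniformly over $n$, rather than in a single fixed norm, which is again handled by the norm equivalence of Lemma \ref{lemma:equiv}.
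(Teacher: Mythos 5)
Your proof is correct, and it takes a genuinely different route from the paper's. The paper proves (\ref{eq:D}) by a first-order Taylor expansion of $E$ at $u_n$ in the $a_{u_n}$ inner product: writing $g_n=\text{grad }E(u_n)$, it uses $\nabla_{a_{u_n}}E(u_n)=u_n$, the second-order retraction, and the identity $\|g_n\|_{a_{u_n}}^2=(u_n,u_n)_{a_{u_n}}-1/(u_n,\mathcal{G}_{u_n}u_n)_{L^2}$, so both the energy decrease and the step length are expressed as $\tau_n(\cdot)+\mathcal{O}(\tau_n^2)$ and $C_D$ emerges once $\tau_{\text{max}}$ is small. You instead avoid Taylor-expanding $E$ altogether: you use the exact completion-of-squares identity (the same trick the paper deploys in Theorem \ref{thm:GP-L}, but applied to the pair $(u_n,u_{n+1})$ with the coefficient frozen at $u_n$), treat the frozen quadratic form exactly via $(u_n,g_n)_{a_{u_n}}=\|g_n\|_{a_{u_n}}^2$, the $L^2$-orthogonality $g_n\perp u_n$, and degree-two homogeneity (so the normalization can only decrease the frozen quadratic energy), and absorb the nonnegative quartic remainder with the $L^\infty$ bound and Poincar\'e; the only approximation left is the second-order retraction correction in the step-length bound. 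What this buys is an explicit, quantitative decrease $E(u_n)-E(u_{n+1})\ge\tau_n(1-C'\tau_n)\|g_n\|_{a_{u_n}}^2$ with traceable constants, sidestepping the question of uniformity of the Taylor remainder of $E$ in infinite dimensions, which the paper's $\mathcal{O}(\tau_n^2)$ bookkeeping glosses over; what the paper's route buys is brevity and a template that extends to energies and retractions lacking such exact algebraic structure. One point you should phrase more carefully: invoking ``energy monotonicity'' to make $c_2$ (hence $\tau_{\text{max}}$, $C_D$) uniform in $n$ looks circular, since monotonicity is a consequence of (\ref{eq:D}); the fix is an induction --- if $E(u_k)\le E(u_0)$ for all $k\le n$, then $\|g_n\|_{a_{u_n}}^2\le(u_n,u_n)_{a_{u_n}}=E(u_n)+\tfrac{\beta}{2}\int_\Omega u_n^4\le E(u_0)+\tfrac{\beta}{2}M_0^2$, so the step-$n$ constants are uniform, and the decrease just established at step $n$ yields $E(u_{n+1})\le E(u_n)$, closing the induction.
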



\begin{proof}
    It is obvious that $\|u_n-u_{n+1}\|_{a_0} \le \|u_n-u_{n+1}\|_{a_{u_n}}$. Since $\{u_n\}$ is generated by the Sobolev projected gradient descent algorithm, we have
    \begin{align*}
        u_{n+1} &= R\left(u_n - \tau_n \cdot\text{grad }E(u_n)\right),\\
        \text{grad }E(u_n) &= u_n - \frac{(u_n,u_n)_{L^2}}{(\G_{u_n} u_n,\G_{u_n} u_n)_{a_{u_n}}}\G_{u_n} u_n = u_n- \frac{\mathcal{G}_{u_n} u_n}{(u_n,\mathcal{G}_{u_n} u_n)_{L^2}}.
    \end{align*}
    The second-order retraction property implies that
    \begin{align*}
        u_n - u_{n+1} = \tau_n \left(u_n- \frac{\mathcal{G}_{u_n} u_n}{(u_n,\mathcal{G}_{u_n} u_n)_{L^2}}\right) + \mathcal{O}(\tau_n^2).
    \end{align*}
    Thus, we obtain
    \begin{align*}
        E(u_n) - E(u_{n+1}) &= \left(u_n - u_{n+1}, \,\, \nabla_{a_{u_n}} E(u_n) \right)_{a_{u_n}} +\mathcal{O}(\|u_n - u_{n+1}\|^2) \\
        &= \left(u_n - u_{n+1}, \,\, u_n \right)_{a_{u_n}} +\mathcal{O}(\|u_n - u_{n+1}\|^2) \\
        &=  \tau_n \left(u_n- \frac{\mathcal{G}_{u_n} u_n}{(u_n,\mathcal{G}_{u_n} u_n)_{L^2}}, \,\, u_n \right)_{a_{u_n}}+\mathcal{O}(\tau_n^2) \\
        &= \tau_n \left((u_n, u_n)_{a_{u_n}} - \frac{1}{(u_n,\mathcal{G}_{u_n} u_n)_{L^2}}\right) + \mathcal{O}(\tau_n^2).
    \end{align*}
    On the other hand, we have
    \begin{align*}
        \|\text{grad }E(u_n)\|_{a_{u_n}} 
        = \left((u_n, u_n)_{a_{u_n}} - \frac{1}{(u_n,\mathcal{G}_{u_n} u_n)_{L^2}}\right)^{\frac{1}{2}},
    \end{align*}
    and 
    \begin{align*}
        \|u_n-u_{n+1}\|_{a_{u_n}} &= \tau_n  \left\|u_n- \frac{\mathcal{G}_{u_n} u_n}{(u_n,\mathcal{G}_{u_n} u_n)_{L^2}}\right\|_{a_{u_n}} + \mathcal{O}(\tau_n^2) \\
        &= \tau_n \left((u_n, u_n)_{a_{u_n}} - \frac{1}{(u_n,\mathcal{G}_{u_n} u_n)_{L^2}}\right)^{\frac{1}{2}} + \mathcal{O}(\tau_n^2).
    \end{align*}
    This implies that
    \begin{align*}
        \|\text{grad }E(u_n)\|_{a_{u_n}}\|u_n-u_{n+1}\|_{a_0} 
        \le \tau_n \left((u_n, u_n)_{a_{u_n}} - \frac{1}{(u_n,\mathcal{G}_{u_n} u_n)_{L^2}}\right) + \mathcal{O}(\tau_n^2).
    \end{align*}
    Therefore, there exists a $\tau_{\text{max}}>0$ such that when $\tau\le\tau_{\text{max}}$, there exists $C_D$ such that Condition (\ref{eq:D}) holds. This $C_D$ only depends on $\tau_{\text{max}}$, but is independent of $u_n$.
\end{proof}

Next, we have the theorem is on Condition (\ref{eq:S}) for the sequence generated by the proposed algorithm. 

\begin{theorem}
\label{thm:GP-S}
    Under Assumptions \ref{ass:GP}, Condition (\ref{eq:S}) is satisfied for for $\|\cdot\|_X = \|\cdot\|_{a_u}$, $\|\cdot\|_Y = \|\cdot\|_{a_0}$ if $\{u_n\}$ is generated by the Sobolev projected gradient descent with step size $0<\tau_{\text{min}}\le \tau_n\le \tau_{\text{max}}$ \zzy{for some $0<\tau_{\text{min}}\le\tau_{\text{max}}$}, i.e.,
    \begin{align*}
        \|u_{n+1}-u_n\|_{a_0} \ge C_S \|\text{grad } E(u_n)\|_{a_{u_n}}.
    \end{align*}
\end{theorem}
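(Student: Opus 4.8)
The plan is to show that the displacement $\|u_{n+1}-u_n\|_{a_0}$ is bounded below by a constant multiple of $\|\text{grad }E(u_n)\|_{a_{u_n}}$, exploiting the lower bound $\tau_n \ge \tau_{\text{min}}>0$ on the step size, together with the norm equivalence from Lemma \ref{lemma:equiv}. The starting point is exactly the expansions already computed in the proof of Theorem \ref{thm:GP-D}. There we found, using the second-order retraction property,
\begin{align*}
    \|u_{n+1}-u_n\|_{a_{u_n}} = \tau_n \left((u_n,u_n)_{a_{u_n}} - \frac{1}{(u_n,\G_{u_n}u_n)_{L^2}}\right)^{1/2} + \mathcal{O}(\tau_n^2),
\end{align*}
while $\|\text{grad }E(u_n)\|_{a_{u_n}}$ equals precisely the square-root factor on the right-hand side. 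Hence in the $\|\cdot\|_{a_{u_n}}$ norm one has $\|u_{n+1}-u_n\|_{a_{u_n}} = \tau_n \|\text{grad }E(u_n)\|_{a_{u_n}} + \mathcal{O}(\tau_n^2)$, so that for $\tau_n \le \tau_{\text{max}}$ small enough the higher-order term is absorbed and $\|u_{n+1}-u_n\|_{a_{u_n}} \ge \tfrac{1}{2}\tau_{\text{min}}\|\text{grad }E(u_n)\|_{a_{u_n}}$.

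The next step is to pass from the $\|\cdot\|_{a_{u_n}}$ norm to the fixed norm $\|\cdot\|_{a_0}$ on the left-hand side, since Condition (S) is stated with $\|\cdot\|_Y = \|\cdot\|_{a_0}$. This is where Lemma \ref{lemma:equiv} is essential: it gives $\|u_{n+1}-u_n\|_{a_0} \ge C_E \|u_{n+1}-u_n\|_{a_{u_n}}$, with $C_E$ independent of $n$ (depending only on $\beta$, $M_0$, $V$, $\Omega$). Chaining the two inequalities yields
\begin{align*}
    \|u_{n+1}-u_n\|_{a_0} \ge C_E\,\|u_{n+1}-u_n\|_{a_{u_n}} \ge \tfrac{1}{2}C_E\,\tau_{\text{min}}\,\|\text{grad }E(u_n)\|_{a_{u_n}},
\end{align*}
so that $C_S := \tfrac{1}{2}C_E\,\tau_{\text{min}}$ works, and crucially this constant is independent of $u_n$.

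I expect the main obstacle to be the careful handling of the $\mathcal{O}(\tau_n^2)$ remainder. One must verify that the implied constant in the second-order retraction error is uniform in $n$ — that is, independent of which iterate $u_n$ one is at — so that choosing $\tau_{\text{max}}$ small enough guarantees the remainder is dominated by the leading $\tau_n$ term \emph{simultaneously for all $n$}. The uniformity follows because the iterates live on $\M$ with the global $L^\infty$ bound $M_0$ and the norm equivalences of Lemma \ref{lemma:equiv} are uniform over $\M$, but this point should be stated explicitly. A subtlety worth noting is that the argument requires $\|\text{grad }E(u_n)\|_{a_{u_n}}$ to be bounded so that $\mathcal{O}(\tau_n^2)$ genuinely absorbs into a fraction of the leading term; this boundedness is again guaranteed by the compactness of the constraint set and the norm equivalences, and once it is in hand the lower bound $\tau_n \ge \tau_{\text{min}}$ does the rest.
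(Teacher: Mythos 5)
Your proof is correct and takes essentially the same route as the paper's: it reuses the expansions of $\|u_{n+1}-u_n\|_{a_{u_n}}$ and $\|\text{grad }E(u_n)\|_{a_{u_n}}$ from the proof of Theorem \ref{thm:GP-D}, absorbs the $\mathcal{O}(\tau_n^2)$ remainder using $\tau_{\text{min}}\le\tau_n\le\tau_{\text{max}}$, and passes to the $a_0$-norm via Lemma \ref{lemma:equiv}, yielding $C_S$ depending only on $C_E$, $\tau_{\text{min}}$, $\tau_{\text{max}}$. Your added attention to the uniformity in $n$ of the remainder is a point the paper leaves implicit (though the boundedness you invoke should be attributed to the global $L^\infty$ bound $M_0$ on $\M$ and energy decay rather than ``compactness of the constraint set,'' which fails in the infinite-dimensional setting).
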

\begin{proof}
    By Lemma \ref{lemma:equiv}, we have $\|u_{n+1}-u_n\|_{a_0} \ge C_E  \|u_{n+1}-u_n\|_{a_{u_n}}$ for some constant $C_E$. Note that in the previous proof we have shown that
    \begin{align*}
        \|\text{grad }E(u_n)\|_{a_{u_n}} 
        = \left((u_n, u_n)_{a_{u_n}} - \frac{1}{(u_n,\mathcal{G}_{u_n} u_n)_{L^2}}\right)^{\frac{1}{2}}
    \end{align*}
    and 
    \begin{align*}
        \|u_n-u_{n+1}\|_{a_{u_n}} = \tau_n \left((u_n, u_n)_{a_{u_n}} - \frac{1}{(u_n,\mathcal{G}_{u_n} u_n)_{L^2}}\right)^{\frac{1}{2}} + \mathcal{O}(\tau_n^2).
    \end{align*}
    Therefore, when $\tau_{\text{min}}\le \tau_n\le \tau_{\text{max}}$ \zzy{for some $0<\tau_{\text{min}}\le\tau_{\text{max}}$}, there exists a constant $C_S$ depending only on $C_E$, $\tau_{\text{min}}$ and $\tau_{\text{max}}$, such that 
    \begin{align*}
        \|u_{n+1}-u_n\|_{a_0} \ge C_S \|\text{grad }E(u_n)\|_{a_{u_n}}.
    \end{align*}
\end{proof}

Finally, we deduce the following results on the exponential convergence.

\begin{theorem}[Convergence rate of Sobolev PGD]
\label{thm:conv}
    If the Sobolev projected gradient descent for $E(u)$ converges to the ground state $v$, \zzy{and the step size $\{\tau_n\}$ satisfies $0<\tau_{\text{min}}\le \tau_n \le \tau_{\text{max}}$}, then it converges in the $a_0$-norm \zzy{with an asymptotic exponential convergence rate}.
\end{theorem}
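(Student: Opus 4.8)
The plan is to assemble Theorem~\ref{thm:conv} from the pieces already established, treating it as a direct application of the abstract convergence theorem~\ref{thm:LDS}. The strategy is to verify that the triplet of conditions (\ref{eq:L}), (\ref{eq:D}), (\ref{eq:S}) all hold \emph{simultaneously} for the sequence $\{u_n\}$ generated by Sobolev PGD, with a consistent choice of norms, and then simply invoke the $\theta=\frac{1}{2}$ branch of Theorem~\ref{thm:LDS} to conclude exponential convergence in the $a_0$-norm.

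First I would fix the norm choices to match what the component theorems require: set $\|\cdot\|_X = \|\cdot\|_{a_u}$ (the adaptive inner product, used in the gradient measure) and $\|\cdot\|_Y = \|\cdot\|_{a_0}$ (the fixed inner product, used as the convergence measure). These are precisely the choices under which Theorems~\ref{thm:GP-D} and \ref{thm:GP-S} establish (\ref{eq:D}) and (\ref{eq:S}). For the {\L}ojasiewicz inequality (\ref{eq:L}), Theorem~\ref{thm:GP-L} gives it with exponent $\theta=\frac{1}{2}$ in the $a_u$-norm, but only in a neighborhood $\{u\in\M:\, E(u)\ge E(v),\,\|u-v\|_{H^1}\le C\}$ of the ground state. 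The hypothesis of the present theorem is exactly that the sequence converges to $v$; by Lemma~\ref{lemma:nonnegative} the convergence is strong in $H^1(\Omega)$, so for $n$ large enough we have $\|u_n-v\|_{H^1}\le C$, i.e. $u_n$ eventually enters this neighborhood. Since $\{E(u_n)\}$ is monotonically decreasing (from the descent inequality) with limit $E(v)=E(u^*)$, we also have $E(u_n)\ge E(v)$ for all $n$. Hence (\ref{eq:L}) holds for all sufficiently large $n$, which is all that Theorem~\ref{thm:LDS} demands.

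The one point requiring a little care is the norm mismatch and the ``for large enough $n$'' qualifiers. Theorem~\ref{thm:LDS} is stated with a single ambient norm $\|\cdot\|_Y$ and a single tangent norm $\|\cdot\|_X$, and its conclusion is phrased asymptotically; the three conditions only need to hold for large $n$. I would note that $\|\cdot\|_{a_u}$ varies with $u$, but this is exactly the ``mixed/varying norm'' freedom the remark after Theorem~\ref{thm:LDS} advertises, and the constants $C_L$, $C_D$, $C_S$ produced in Theorems~\ref{thm:GP-L}--\ref{thm:GP-S} are all uniform in $u_n$ (they depend only on $\beta,V,M_0,\Omega,\lambda_1,C_v,\tau_{\min},\tau_{\max}$, not on the particular iterate), so the abstract theorem applies with a single set of constants. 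The step-size bounds $0<\tau_{\min}\le\tau_n\le\tau_{\max}$ are precisely the hypotheses under which (\ref{eq:D}) and (\ref{eq:S}) were proved.

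With all three conditions verified for large $n$ and $\theta=\frac{1}{2}$, Theorem~\ref{thm:LDS} yields the asymptotic bound $\|u_n-v\|_{a_0}=\|u_n-u^*\|_Y\lesssim e^{-cn}$ with $c=\log(1-\frac{C_DC_S}{2C_L^2})<0$, which is exactly the claimed exponential convergence rate in the $a_0$-norm. I expect the only genuine obstacle to be the bookkeeping around the neighborhood hypothesis of Theorem~\ref{thm:GP-L}: one must be sure that convergence to $v$ (supplied by Lemma~\ref{lemma:nonnegative}) really does force the iterates into the region where the {\L}ojasiewicz inequality is valid, and that the $H^1$-closeness there is compatible with the $a_0$- and $a_u$-measures used elsewhere. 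This compatibility is guaranteed by the norm equivalences in Lemma~\ref{lemma:equiv}, which tie $\|\cdot\|_{H^1}$, $\|\cdot\|_{a_0}$, and $\|\cdot\|_{a_u}$ together with uniform constants, so no contradiction arises between the different norms appearing in the three conditions.
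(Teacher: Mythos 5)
Your proposal is correct and follows exactly the paper's own route: the paper proves Theorem~\ref{thm:conv} by directly combining Theorem~\ref{thm:LDS} with Theorems~\ref{thm:GP-L}, \ref{thm:GP-D}, and \ref{thm:GP-S}, which is precisely your assembly. The additional bookkeeping you supply (iterates eventually entering the $H^1$-neighborhood where (\ref{eq:L}) holds, uniformity of the constants, and the norm compatibility via Lemma~\ref{lemma:equiv}) is a faithful elaboration of what the paper leaves implicit.
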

\begin{proof}
    The proof follows directly from Theorems \ref{thm:LDS}, \ref{thm:GP-L}, \ref{thm:GP-D} and \ref{thm:GP-S}.
\end{proof}

\begin{corollary}[Global convergence to ground state]
\label{col:global}
    If the initial state $u_0$ satisfies $u_0\ge 0$ everywhere on $\Omega$, \zzy{and the step size $\{\tau_n\}$ satisfies $0<\tau_{\text{min}}\le \tau_n \le \tau_{\text{max}}$}, then the Sobolev projected gradient descent for $E(u)$ converges in the $a_0$-norm to the unique ground state \zzy{with an asymptotic exponential convergence rate}.
\end{corollary}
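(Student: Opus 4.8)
The plan is to chain together the global convergence of Lemma \ref{lemma:nonnegative} with the local exponential rate packaged in Theorem \ref{thm:conv}; the corollary is essentially the composition of these two results, so the work is in verifying that the hypotheses hand off cleanly. First I would check that the assumption $u_0\ge 0$ puts us in the regime of Lemma \ref{lemma:nonnegative}. Since $\|u_0\|_{L^2}=1$ the initial point is not identically zero, so the strong maximum principle (equivalently the Harnack inequality) applied to the linearized operator $\mathcal{A}_{u_0}$ forces $\mathcal{G}_{u_0}u_0>0$ strictly on $\Omega$; with $\tau_0\le 1$ the convex-combination structure of the update then yields $u_1>0$ everywhere. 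Restarting the indexing from $u_1$, Lemma \ref{lemma:nonnegative} applies and gives $u_n\to v$ strongly in $H^1(\Omega)$, where $v$ is the unique ground state furnished by Theorem \ref{thm:ground}.

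Second, with the qualitative convergence $u_n\to v$ in hand, I would invoke Theorem \ref{thm:conv}, whose proof assembles the triplet (\ref{eq:L}), (\ref{eq:D}), (\ref{eq:S}) from Theorems \ref{thm:GP-L}, \ref{thm:GP-D} and \ref{thm:GP-S} and feeds them into the abstract Theorem \ref{thm:LDS} with the favorable exponent $\theta=\tfrac12$. The point deserving attention is that the {\L}ojasiewicz inequality of Theorem \ref{thm:GP-L} is only guaranteed inside the $H^1$-ball $\{\,\|u-v\|_{H^1}\le C\,\}$ about $v$; but the global convergence established in the first step ensures that $u_n$ enters and stays in this ball for all sufficiently large $n$, so the ``for large enough $n$'' clauses of Theorem \ref{thm:LDS} are met. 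Theorem \ref{thm:LDS} then produces $\|u_n-v\|_{a_0}\lesssim e^{-cn}$, which is precisely the claimed asymptotic exponential rate; the norm equivalences of Lemma \ref{lemma:equiv} allow the statement to be phrased in the $a_0$-norm, the $a_u$-norm, or $H^1$ interchangeably.

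The only bookkeeping subtlety I would flag is the step-size window. Nonnegativity preservation in Lemma \ref{lemma:nonnegative} requires $\tau_n\le 1$, whereas the descent and step-size estimates of Theorems \ref{thm:GP-D} and \ref{thm:GP-S} are proved for $\tau_n\le\tau_{\text{max}}$ with a possibly smaller $\tau_{\text{max}}$; I would take the effective upper bound to be the minimum of the two, so that a single condition $0<\tau_{\text{min}}\le\tau_n\le\tau_{\text{max}}$ simultaneously keeps the iterates nonnegative and validates (\ref{eq:D})--(\ref{eq:S}). I do not expect a genuine obstacle anywhere: the entire argument is the composition of already-proved statements. The one place where care is actually required is the hand-off between the global and local phases, namely confirming that the qualitatively convergent sequence truly lands inside the {\L}ojasiewicz neighborhood of radius $C$ before the rate estimate is switched on, and this is immediate from $u_n\to v$ in $H^1$.
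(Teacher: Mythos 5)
Your proposal is correct and follows essentially the same route as the paper: compose the global convergence of Lemma \ref{lemma:nonnegative} with the exponential-rate package of Theorem \ref{thm:conv} (itself Theorems \ref{thm:GP-L}, \ref{thm:GP-D}, \ref{thm:GP-S} fed into Theorem \ref{thm:LDS}), using convergence to guarantee the iterates eventually stay in the {\L}ojasiewicz neighborhood. In fact you are more careful than the paper on two points it glosses over --- upgrading $u_0\ge 0$ to strict positivity after one step so that Lemma \ref{lemma:nonnegative}'s hypothesis genuinely applies, and reconciling the $\tau_n\le 1$ window for nonnegativity with the possibly smaller $\tau_{\text{max}}$ needed for (\ref{eq:D})--(\ref{eq:S}).
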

\begin{proof}
   Since the initial state is nonnegative, Lemma \ref{lemma:nonnegative} ensures the strong convergence of $\{u_n\}$ to the ground state $v$ in $H_0^1(\Omega)$. By Theorem \ref{thm:conv}, the asymptotic convergence rate in the $a_0$-norm is exponentially fast.
\end{proof}

Note that since the domain $\Omega$ is bounded, this convergence rate in the $a_0$-norm implies the exponential convergence rate in the $H^1$ or $L^2$ norm. \rev{We also remark that the optimal step size with theoretical guarantee depends on the values $\tau_\text{min}$ and $\tau_\text{max}$, which in turn depend on some properties of the ground state that is not known beforehand, but some practical choices of $\tau$ are demonstrated in the numerical experiments in Section \ref{sec:numerical}.}

\section{Spatial discretization}
\label{sec:discretization}

To solve the eigenproblem numerically using the computational procedure in the previous sections, we need to discretize the problem in the spatial domain $\Omega$. Let $\Omega_h$ be a spatial discretization with grid size $h$. \rev{Note that we only require $\Omega_h$ to be a convergent discretization, i.e., the solution to the discrete problem converges to that of the continuous problem as $h\to 0^+$, and the following analysis applies to general discretization schemes.} The discretized problem can be written as
\begin{equation}
\label{eq:eig_discrete}
    \min_{\|u_h\|_{L_h^2}=1, \,\, u_h\in\R^N} E_h(u_h):=  \|u_h\|_{\Lap_h}^2 + \|u_h\|_{V_h}^2+ \frac{\beta}{2} \|u_h\|_{L^4_h}^4,
\end{equation}
where
\begin{align*}
    \|u_h\|_{\Lap_h}^2 = u_h^\top (-\Lap_h) u_h \cdot  h^d, \quad
    \|u_h\|_{V_h}^2 = \sum_{i=1}^N V_h(i)u_h(i)^2 h^d, \quad 
    \|u_h\|_{L^p_h}^p = \sum_{i=1}^N u_h(i)^p h^d.
\end{align*}
Here $N$ denotes the total number of grid points, $(i)$ is an indexing of the grid points, i.e., $u_h(i)$  is the $i$-th entry of the vectorized $u_h$, $d$ is the dimension of the physical space, and ${\Lap_h}$ is the discretized Laplacian. 
The linearized operator $\mathcal{A}_{u,h}$ now has a matrix representation in $\R^{N \times N}$:
\begin{align*}
    \mathcal{A}_{u,h} = -\Lap_h + \text{diag} \{V_h + \beta u_h^{[2]}\},
\end{align*}
where $u_h^{[2]}(i):=u_h(i)^2$, i.e., $u_h^{[2]}$ is the \rev{componentwise} squared vector of $u_h$. \zzy{The respective norm is defined as $\|y\|^2_{\A_{u,h}}:= y^\top \A_{u,h}y$.}
We have the following results. 

\begin{theorem}[Discrete version of Theorem \ref{thm:ground}]
\label{thm:ground_discretized}
    There is a ground state $v_h$ of the discretized problem that satisfies $v_h>0$ everywhere on $\Omega_h$. It is the unique positive eigenstate of (\ref{eq:eig_discrete}). Moreover, it is both the \emph{unique} ground state of the nonlinear eigenproblem  (\ref{eq:eig_discrete}) and the \emph{unique} ground state of the linearized operator $\mathcal{A}_{v,h}$ up to the sign.
\end{theorem}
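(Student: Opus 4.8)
The plan is to transcribe the proof of Theorem~\ref{thm:ground} into the finite-dimensional setting, replacing each analytic tool by its linear-algebra counterpart: the compactness of the constraint sphere in $\R^N$ plays the role of the direct method, and the Perron--Frobenius theorem replaces the Harnack inequality for establishing strict positivity. First I would note that the constraint set $\{u_h\in\R^N:\|u_h\|_{L^2_h}=1\}$ is a compact sphere and $E_h$ is continuous, so a global minimizer $v_h$ exists. To make it non-negative, observe that the discretized Laplacian $-\Lap_h$ coming from a standard finite-difference (or $P_1$ finite-element) scheme is symmetric with non-positive off-diagonal entries, so its quadratic form can be written as $u_h^\top(-\Lap_h)u_h = \sum_{i\sim j} w_{ij}(u_h(i)-u_h(j))^2 + \sum_i d_i\,u_h(i)^2$ with weights $w_{ij}\ge 0$ and boundary contributions $d_i\ge 0$. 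Since $|\,|a|-|b|\,|\le|a-b|$ and the potential and quartic terms depend on $u_h$ only through $u_h(i)^2$, we get $E_h(|u_h|)\le E_h(u_h)$; hence $v_h$ may be taken with $v_h\ge 0$.

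Next I would upgrade non-negativity to strict positivity and simultaneously obtain the linearized ground-state property. The minimizer satisfies the discrete Euler--Lagrange equation $\mathcal{A}_{v,h}\,v_h=\lambda_h v_h$, so $v_h$ is a non-negative eigenvector of $\mathcal{A}_{v,h}=-\Lap_h+\text{diag}\{V_h+\beta v_h^{[2]}\}$. The matrix $\mathcal{A}_{v,h}$ inherits the non-positive off-diagonal structure of $-\Lap_h$ (the diagonal perturbation does not affect off-diagonal entries), so for $\sigma$ large enough $\sigma I-\mathcal{A}_{v,h}$ is entrywise non-negative and, because the grid is connected, irreducible. The Perron--Frobenius theorem then gives that its spectral radius is simple with a strictly positive eigenvector, which corresponds to the smallest eigenvalue of $\mathcal{A}_{v,h}$ and is its only non-negative eigenvector up to scaling. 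Since $v_h$ is a non-negative eigenvector, it must be this Perron vector; therefore $v_h>0$, $v_h$ is the unique ground state of $\mathcal{A}_{v,h}$, and the ``double ground state'' property holds.

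It remains to prove uniqueness of the positive eigenstate of the nonlinear problem, the discrete analogue of the Picone/convexity argument behind Theorem~\ref{thm:ground}. Here I would pass to the density variable $\rho=u_h^{[2]}$, under which the constraint $\sum_i\rho_i h^d=1$ is linear and the kinetic energy becomes $\sum_{i\sim j} w_{ij}\big(\sqrt{\rho_i}-\sqrt{\rho_j}\big)^2+\sum_i d_i\rho_i$. Since $(\sqrt{a}-\sqrt{b})^2=a+b-2\sqrt{ab}$ is jointly convex on $\{a,b\ge 0\}$ (its Hessian is positive semidefinite), the potential term is linear in $\rho$, and the quartic term $\tfrac{\beta}{2}\sum_i\rho_i^2 h^d$ is convex, the reformulated energy $\widetilde{E}_h(\rho):=E_h(\sqrt{\rho})$ is convex on the convex set $\{\rho\ge 0,\ \sum_i\rho_i h^d=1\}$. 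For $u_h>0$ the map $u_h\mapsto\rho$ is a diffeomorphism, so every positive eigenstate corresponds to an interior critical point of $\widetilde{E}_h$, which by convexity is a global minimizer; strict convexity of the quartic term when $\beta>0$ (and Perron--Frobenius directly when $\beta=0$) forces this minimizer to be unique. Hence the positive eigenstate is unique and equals $v_h$, which is therefore also the unique ground state of the nonlinear problem.

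The main obstacle I anticipate is not a single estimate but verifying that the chosen discretization actually supplies the two structural ingredients on which everything rests: the M-matrix sign pattern and irreducibility of $-\Lap_h$ on a connected grid (needed for Perron--Frobenius), and the joint convexity of the discrete kinetic energy in $\rho$ (the discrete substitute for the Picone identity). For the standard finite-difference and $P_1$ finite-element Laplacians these hold, but a clean writeup should either assume a monotone/M-matrix discretization or isolate it explicitly as a hypothesis.
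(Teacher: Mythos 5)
Your proposal is correct and its skeleton matches the paper's: existence by compactness of the discrete sphere, nonnegativity of a minimizer, and positivity plus the ``double ground state'' property from the M-matrix structure of $-\Lap_h$ (with the same caveat, which both you and the paper flag, that this structure must be assumed of the discretization). Where you invoke Perron--Frobenius for $\sigma I-\mathcal{A}_{v,h}$, the paper proves exactly that statement by hand: $|y|^\top\mathcal{A}_{u,h}|y|\le y^\top\mathcal{A}_{u,h}y$ forces a ground-state eigenvector to be nonnegative, and a zero entry adjacent to a positive one contradicts the eigenvalue equation via $0=(\mathcal{A}_{u,h}y)(i)\le-\Lap_h(i,j)y(j)<0$; these two routes are interchangeable. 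The genuine divergence is the uniqueness step. The paper never passes to the density variable; it instead proves the algebraic identity $E_h(w_h)-E_h(u_h)=\bigl(\|w_h\|^2_{\mathcal{A}_{u,h}}-\|u_h\|^2_{\mathcal{A}_{u,h}}\bigr)+\frac{\beta}{2}\sum_i\bigl(w_h(i)^2-u_h(i)^2\bigr)^2h^d$, so that if $u_h$ is the lowest eigenvector of its own linearization $\mathcal{A}_{u_h,h}$, both terms are nonnegative and the sum is strictly positive for $w_h\ne\pm u_h$; combined with the Perron step (any positive eigenstate is the lowest eigenvector of its own linearization) this yields uniqueness in one stroke, and this ``lowest eigenvector of its own linearization implies unique global minimizer'' formulation is exactly what the paper reuses for the generalizations in Section \ref{sec:gen}. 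Your hidden-convexity argument in $\rho=u_h^{[2]}$ is the discrete analogue of the classical continuous-case proof and is equally valid; it buys a transparent convex reformulation and uniqueness without the linearization identity. One small patch it needs: strict convexity gives uniqueness of $\rho$, i.e.\ $w_h^{[2]}=v_h^{[2]}$ for any other minimizer or positive eigenstate $w_h$, which pins $w_h$ down only entrywise up to sign; to conclude $w_h=\pm v_h$ globally you must exclude mixed sign patterns, e.g.\ by noting that since $v_h>0$ everywhere and the grid is connected, a sign change occurs across some edge with $w_{ij}>0$ and then $\sum_{i\sim j}w_{ij}(w_h(i)-w_h(j))^2$ strictly exceeds the same form evaluated at $|w_h|$, contradicting minimality.
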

\begin{proof}
    \zzy{The existence of the ground state follows from the compactness of the constraint set $\{u_h: \, u_h\in\R^N, \,\,  \|u_h\|_{L_h^2}=1\}$ and the boundedness of $E_h(u_h)$. Thus it suffices to prove its uniqueness and positivity.}
    The proofs for the continuous version, i.e., Lemma 2 in \cite{cances2000convergence} and Lemmas 5.3 and 5.4 in \cite{henning2020sobolev}, need to be slightly modified to suit the discrete case. This is because the Harnack inequality and the Picone identity are only valid for continuous functions, and we need to establish their discrete counterparts. 
    
    One way to do this is to look at the convergence of the discretized eigenvector to its continuous counterpart at the small grid size limit $h\to 0^+$, see e.g. \cite{kuttler1970finite}. This is always possible no matter what kind of discretization we use. We do not present the details here.
    
    Another way is to observe that the discretized Laplacian, $\Lap_h$, is an M-matrix\footnote{An M-matrix is a matrix with nonnegative diagonal entries and nonpositive off-diagonal entries, \rev{with eigenvalues whose real parts are nonnegative.}} under some typical discretizations. Examples include finite difference discretization, and some P1-finite element discretizations. When $\Lap_h$ is an M-matrix, the proof can be simplified and the small $h$ constraint can be released.
    In this case, the proof takes the following steps:
    \begin{enumerate}[label={(\arabic*)}]
        \item \emph{For any $\mathcal{A}_{u,h}$, its eigenvector corresponding to the smallest eigenvalue can be chosen to be all positive, and is unique up to the sign.}
    
    Since $-\Lap_h$ has positive diagonals and non-positive off-diagonals, so does $\A_{u,h}$. Let $y$ be the  ground state eigenvector of $\A_{u,h}$, then $|y|^\top\mathcal{A}_{u, h}|y| \le y^\top \mathcal{A}_{u, h}y$. This is because $\mathcal{A}_{u, h}(i,i) y(i)^2 = \mathcal{A}_{u, h}(i,i) |y(i)|^2 $ for any $1\le i \le N$, and $\mathcal{A}_{u, h} (i,j) y(i) y(j) \ge \mathcal{A}_{u, h} (i,j) \cdot$ $|y(i)| |y(j)|$ for any $i\neq j$. As $y$ is the ground state eigenvector, this implies $y=|y|$, i.e., $y$ is nonnegative.\\
    We now show that $y$ is all positive.  If this is not true, then $y$ has some positive and some zero entries. So we can always find a zero entry $y{(i)}$ that is spatially next to a nonzero one, say $y{(j)}$, i.e., $y(i)=0$, $y(j)>0$, and $-\Lap_h(i,j)<0$. Then
    \begin{align*}
        0 &= \lambda y{(i)} = (\mathcal{A}_{u, h}y){(i)} = (-\Lap_h y)(i) + V_h(i)y(i) + \beta y(i)^3 \\
        &= (-\Lap_{h}y){(i)} = \sum_k -\Lap_{h}{(i,k)} y{(k)} = \sum_{k\neq i} -\Lap_{h}{(i,k)} y{(k)} \le -\Lap_{h}{(i,j)} y{(j)} < 0,
    \end{align*}
    which is a contradiction. Thus $y$ is all positive and is unique up to the sign.
    
        \item \emph{If $u_h$ itself is the smallest eigenvector of $\mathcal{A}_{u_h,h}$, then it is also the unique global minimizer of $E_h(u)$.}
    
    For any other $w_h\neq \pm u_h$, we have
    \begin{align*}
        E_h(w_h) - E_h(u_h) &= \|w_h\|^2_{\mathcal{A}_{u,h}} - \|u_h\|^2_{\mathcal{A}_{u,h}} + \frac{\beta}{2} \sum_{i=1}^N\left((w_h^{(i)})^4 + (u_h^{(i)})^4 - 2 (w_h^{(i)})^2(u_h^{(i)})^2\right) h^d  \\
        &= \left(\|w_h\|^2_{\mathcal{A}_{u,h}} - \|u_h\|^2_{\mathcal{A}_{u,h}}\right) + \frac{\beta}{2} \sum_{i=1}^N \left((w_h^{(i)})^2-(u_h^{(i)})^2\right)^2  h^d > 0.
    \end{align*}
    Thus $u_h$ is the unique global minimizer of $E_h(u)$.
    
        \item \emph{There is a unique positive eigenstate of (\ref{eq:eig_discrete}), which is the ground state of (\ref{eq:eig_discrete}) and the ground state of the linearized operator.}
   
    Any positive iteration sequence stays positive with gradient descent iteration. 
    \zzy{The compactness of the constraint set} ensures the existence of a sub-sequential limit point $v_h$, which is nonnegative. The fact that $v_h$ is the minimizer of $E_h(u)$ implies that it is an eigenstate of $\mathcal{A}_{v, h}$. By Step (1), this eigenstate is all positive and is thus the smallest eigenstate of $\mathcal{A}_{v,h}$. By Step (2), it is also the unique global minimizer of $E_h(u)$. 
    \end{enumerate}
\end{proof}

\begin{theorem}[Discrete version of Theorem \ref{thm:conv}] 
\label{thm:conv_discretized}
If the Sobolev PGD for $E_h(u)$ converges to the ground state $v_h$, \zzy{and the step size $\{\tau_n\}$ satisfies $0<\tau_{\text{min}}\le \tau_n \le \tau_{\text{max}}$}, then it converges \zzy{with an asymptotic exponential convergence rate}.
\end{theorem}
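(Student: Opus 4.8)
The plan is to mirror the proof of Theorem \ref{thm:conv} in the finite-dimensional setting by re-establishing the triplet of conditions (\ref{eq:L}), (\ref{eq:D}), (\ref{eq:S}) for $E_h$ and then invoking the abstract Theorem \ref{thm:LDS}. The crucial observation is that Theorem \ref{thm:LDS} is stated for an arbitrary Hilbert manifold, so the discrete constraint set $\{u_h\in\R^N:\|u_h\|_{L_h^2}=1\}$ equipped with the norms $\|\cdot\|_{\A_{u,h}}$ and the discrete $a_0$-norm $\|\cdot\|_{\A_{0,h}}$ (induced by $\A_{0,h}:=-\Lap_h+\text{diag}\{V_h\}$) is simply a special case. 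Since the theorem already \emph{assumes} $\{u_n\}$ converges to $v_h$, I only need to verify the three conditions \emph{asymptotically}, i.e. for $u_n$ in a fixed neighborhood of $v_h$, with $\theta=\tfrac12$; the exponential rate then follows word for word.

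First I would collect the discrete analogues of the supporting lemmas. Norm equivalence (Lemma \ref{lemma:equiv}) is automatic in finite dimensions, but I would record that the constants relating $\|\cdot\|_{\A_{u,h}}$, $\|\cdot\|_{\A_{0,h}}$ and the Euclidean norm can be taken uniform along the sequence, because the iterates lie on the compact sphere with a uniform $\ell^\infty$ bound (so $\beta u_h^{[2]}$ is uniformly bounded). The second-order retraction lemma transfers unchanged: the same computation gives $\|R(u_h+\xi)-(u_h+\xi)\|_{\A_{u,h}}=\tfrac12\|\xi\|_{L_h^2}^2+\mathcal{O}(\|\xi\|_{L_h^2}^4)$, with discrete norm equivalence playing the role of the Poincaré inequality. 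The linear Łojasiewicz estimate (Lemma \ref{lemma:linear}) is purely spectral and applies verbatim to the symmetric positive-definite matrix $\A_{u,h}$; its hypothesis $\mu_2>\mu_1$ is exactly the spectral gap of $\A_{v_h,h}$, which holds because Theorem \ref{thm:ground_discretized} guarantees that the smallest eigenvalue of $\A_{v_h,h}$ is simple.

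Next I would re-prove the three main theorems. Condition (\ref{eq:L}) near $v_h$ follows as in Theorem \ref{thm:GP-L}: the inequality $E_h(u_h)-E_h(v_h)\le\|u_h\|^2_{\A_{u,h}}-\|v_h\|^2_{\A_{u,h}}$ holds because the difference equals $-\tfrac{\beta}{2}\sum_i (u_h(i)^2-v_h(i)^2)^2 h^d\le 0$ (precisely the manipulation already used in Step (2) of Theorem \ref{thm:ground_discretized}), and $\|\text{grad }E_h(u_h)\|^2_{\A_{u,h}}=\|u_h\|^2_{\A_{u,h}}-1/(u_h,\G_{u_h}u_h)_{L_h^2}$. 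This reduces (\ref{eq:L}) to the linear inequality supplied by Lemma \ref{lemma:linear}, provided $u_h$ is close enough to the smallest eigenvector of $\A_{u_h,h}$; the required proximity is furnished by a discrete version of Lemma \ref{lemma:perturb}, which in finite dimensions is a standard eigenvector perturbation estimate controlled by the gap of $\A_{v_h,h}$. Conditions (\ref{eq:D}) and (\ref{eq:S}) then follow exactly as in Theorems \ref{thm:GP-D} and \ref{thm:GP-S}: the second-order retraction produces the same $\mathcal{O}(\tau_n^2)$ corrections (which scale with $\|\text{grad }E_h\|^2$), and the step-size window $0<\tau_{\text{min}}\le\tau_n\le\tau_{\text{max}}$ yields the constants $C_D$ and $C_S$. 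With all three conditions verified at $\theta=\tfrac12$, Theorem \ref{thm:LDS} delivers the asymptotic exponential rate.

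The main obstacle, such as it is, lies in the discrete perturbation lemma and the uniformity of constants. For a fixed grid this is benign—finite dimensionality makes the estimates elementary and the simple smallest eigenvalue of $\A_{v_h,h}$ gives a genuine gap—but the delicate point is to ensure that $C_L$, the norm-equivalence constants, and the proximity radius are bounded independently of $n$ along the tail of the sequence. This is secured by the compactness of the sphere together with the assumed convergence $u_n\to v_h$, which confines the iterates to a fixed neighborhood on which $\A_{u_n,h}$ depends continuously (hence with a uniformly bounded gap) on $u_n$. I would not attempt uniformity in $h$ here, since the statement concerns a fixed discretization.
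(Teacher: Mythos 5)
Your proposal is correct and follows essentially the same route as the paper: the paper's proof likewise invokes the ``double ground state'' property from Theorem \ref{thm:ground_discretized}, asserts that Theorems \ref{thm:GP-L}, \ref{thm:GP-D}, and \ref{thm:GP-S} carry over to the discrete setting, and concludes via Theorem \ref{thm:LDS}. Your write-up simply makes explicit the details the paper leaves implicit (the discrete norm equivalence, the spectral-gap hypothesis for Lemma \ref{lemma:linear}, and the finite-dimensional analogue of Lemma \ref{lemma:perturb}), all of which are sound.
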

\begin{proof}
    Theorem \ref{thm:ground_discretized} ensures that $v_h$ is still the ``double'' ground state of both $E_h(u)$ and $\mathcal{A}_{v_h, h}$. Thus, Theorems \ref{thm:GP-L}, \ref{thm:GP-D}, and \ref{thm:GP-S} can all be generalized to the discretized case in the same way. The exponential convergence rate follows from the master theorem \ref{thm:LDS}.
\end{proof}

\begin{corollary}[Discrete version of Corollary \ref{col:global}]
   If the initial state $u_0$ satisfies $u_0(i)\ge 0$ $\forall i$, \zzy{and the step size $\{\tau_n\}$ satisfies $0<\tau_{\text{min}}\le \tau_n \le \tau_{\text{max}}$}, then the Sobolev PGD  for $E_h(u)$ converges to the unique ground state $v_h$ \zzy{with an asymptotic exponential convergence rate}.
\end{corollary}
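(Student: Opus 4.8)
The plan is to mirror the proof of the continuous Corollary \ref{col:global}, replacing each analytic ingredient by its discrete counterpart already supplied by Theorem \ref{thm:ground_discretized}. There are two things to assemble: (i) a discrete analog of Lemma \ref{lemma:nonnegative}, asserting that a nonnegative initialization forces the whole Sobolev PGD sequence to converge to the unique ground state $v_h$; and (ii) the discrete rate result, Theorem \ref{thm:conv_discretized}. Granting (i), the corollary follows at once by feeding the convergence $u_n \to v_h$ into Theorem \ref{thm:conv_discretized}, whose hypotheses are exactly convergence to $v_h$ together with the two-sided step-size bound.

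For (i), I would first prove by induction that $u_n \ge 0$ (entrywise) for all $n$. The inductive step rests on the observation, established in the proof of Theorem \ref{thm:ground_discretized}, that $\A_{u_n,h}$ has positive diagonal and nonpositive off-diagonal entries, i.e. it is a symmetric M-matrix. Consequently its inverse (the discrete Greens function) $\G_{u_n,h} = \A_{u_n,h}^{-1}$ is entrywise nonnegative, so $u_n \ge 0$ gives $\G_{u_n,h} u_n \ge 0$; equivalently, one can repeat the variational argument of Lemma \ref{lemma:nonnegative} with $\phi(y) = \|y\|^2_{\A_{u_n,h}} - 2(y,u_n)_{L_h^2}$, where $\phi(|y|) \le \phi(y)$ now uses both $u_n \ge 0$ and the sign pattern of $\A_{u_n,h}$. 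With $\tau_n \le 1$ (as in Lemma \ref{lemma:nonnegative}), the update $\tilde u_{n+1} = (1-\tau_n)u_n + \tau_n \gamma_n \G_{u_n,h} u_n$ is a nonnegative combination of nonnegative vectors, and normalization preserves this, so $u_{n+1} \ge 0$.

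Next I would extract a convergent subsequence, where the discrete setting is strictly easier than the continuous one: the constraint set $\{u_h \in \R^N : \|u_h\|_{L_h^2} = 1\}$ is a compact sphere in finite dimensions, so energy decay (Condition (\ref{eq:D})) together with compactness yields a cluster point $u^*$ with no need for Rellich--Kondrachov or any weak-to-strong upgrade. Since the step sizes are bounded below by $\tau_{\text{min}}>0$, $u^*$ must be a fixed point of the iteration, hence $\text{grad}\,E_h(u^*)=0$, which as before means $u^*$ is an eigenvector of $\A_{u^*,h}$. Being a limit of nonnegative vectors, $u^*$ is nonnegative; by the Perron--Frobenius / M-matrix argument in Step (1) of Theorem \ref{thm:ground_discretized}, the only nonnegative eigenvector of $\A_{u^*,h}$ is its ground state, so $u^*$ is the ground state of its own linearized operator. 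The ``double ground state'' property (Step (2) of Theorem \ref{thm:ground_discretized}) then identifies $u^*$ with the unique global minimizer $v_h$, and since every cluster point equals $v_h$ the full sequence converges to $v_h$.

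The main obstacle is packaged into the positivity-preservation step: the continuous proof invoked the Harnack inequality and weak $H_0^1(\Omega)$ compactness, neither of which is available verbatim on the grid. The resolution is to lean entirely on the M-matrix structure of $\A_{u,h}$, which Theorem \ref{thm:ground_discretized} already guarantees under the assumed discretizations (finite difference or suitable P1 finite elements); this single structural fact simultaneously delivers nonnegativity of $\G_{u_n,h}u_n$ and, via Perron--Frobenius, the identification of a nonnegative eigenvector with the ground state. Once $u_n \to v_h$ is in hand, Theorem \ref{thm:conv_discretized} supplies the asymptotic exponential rate, completing the proof.
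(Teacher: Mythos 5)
Your proposal is correct and takes essentially the same route as the paper: the paper's own proof is a one-line reduction to the nonnegativity/uniqueness results of Theorem~\ref{thm:ground_discretized} and the rate result of Theorem~\ref{thm:conv_discretized}, which is exactly your assembly. The details you supply (positivity preservation via the M-matrix sign pattern, compactness of the discrete sphere in place of Rellich--Kondrachov, and identification of the cluster point with $v_h$ through Steps (1)--(2) of Theorem~\ref{thm:ground_discretized}) are precisely the ingredients the paper already packaged into the proof of that theorem, so no new idea is needed and none is missing.
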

\begin{proof}
    The proof follows similarly from the nonnegativity and uniqueness results of Theorem \ref{thm:ground_discretized} and the exponential convergence result of Theorem \ref{thm:conv_discretized}.
\end{proof}


\section{Generalization to other nonlinear eigenproblems}
\label{sec:gen}

The Sobolev PGD points out a new direction for first-order fast solvers of nonlinear eigenproblems and higher (than quadratic) order optimization problems. Its application is thus well beyond the Gross-Pitaevskii eigenvalue problem. The operator class and the form of the objective function can be generalized. For example, consider 
\begin{equation}
    -\Delta v + V v + \beta |v|^{2\alpha}v = \lambda v
\end{equation}
for general $\alpha>0$. This ground state equation and the corresponding time-dependent nonlinear Schr\"odinger equation are locally well-posed in $H^1(\R^d)$ as long as $2 \alpha +2 < \frac{2d}{\max \{d-2, 0^+\}}$, see e.g. \cite{frank2014ground} and references therein.
The previous Gross-Pitaevskii eigenvalue problem corresponds to the case $\alpha=1$. 

In general, Theorem \ref{thm:conv} holds true for any $\alpha>0$. The adaptive \zzy{inner product} remains well-posed and the ground state remains a ``double'' eigenstate. The change of \zzy{inner product} from $a_v(\cdot)$ to $a_u(\cdot)$ in the proof of Theorem \ref{thm:GP-L} essentially relies on the convexity of the last term $\int |\cdot|^{2\alpha+2}$ in the energy functional $E(\cdot)$. Therefore, extensions of the previous results in both spatially continuous and discretized cases are easy. We do not present the details here.

It is also common in physics that the \rev{diffusion} is not homogeneous in all spatial directions. For example, it can be stronger in two physical directions and weaker in the third one. 
More generally, we have
\begin{equation}
    -\nabla\cdot(A(x) \nabla v) + V v + \beta |v|^{2\alpha}v = \lambda v
\end{equation}
where the coefficient $A(x) \in L^\infty(\Omega)^{d\times d}$, $A(x)$ is symmetric and coercive. An interesting discrete counterpart to this is the nonlinear Schr\"odinger equation on metric trees (e.g. \cite{dovetta2019nls}), where the Laplacian is replaced by a graph Laplacian on a tree-graph $\mathcal{G}$. 

When restricted to a bounded domain, so that the lowest part of the spectrum is always point spectrum, our previous arguments still hold. In the elliptic case, the discretized $\A_h$ may or may not be an M-matrix, but one can always turn to the small grid size limit $h\to 0^+$ limit when necessary.

For an even broader class of nonlinear eigenproblems or constrained optimization problems, the Sobolev gradient descent may still be applicable, but it is not clear whether \emph{exponential convergence} is still true. It can be seen from previous sections that the convergence rate relies on the (\ref{eq:L}) condition, which in turn relies on the ground state $v$ being the ground state of the linearized operator $\A_v$ at $v$, i.e., the so-called \emph{``double ground state''} property. This is a nontrivial property in general, although it can be true for some operators like the biharmonic operator under certain conditions. 
 
\vspace{6pt}

We discuss here one specific generalization of nonlinear Schr\"odinger eigenproblem, and demonstrate that the Sobolev PGD indeed has the potential of tackling previously formidable problems. The problem of interest is 
\begin{equation}
\label{eq:HOI}
    -\Delta v + V v + \beta |v|^{2}v - \delta \Delta (|v|^2) v  = \lambda v,
\end{equation}
where $\delta \ge 0$. In other words, we add a higher-order interaction term $- \delta \Delta (|v|^2)$ to the Gross-Pitaevskii problem. The corresponding energy functional is 
\begin{align}
\label{eq:HOI_E}
    E(u) = \int |\nabla u|^2+V|u|^2+\frac{\beta}{2} |u|^4 + \frac{\delta}{2}\left|\nabla |u|^2\right|^2.
\end{align}
The above eigenproblem and its variational form are analyzed in \cite{bao2019ground}. Moreover, in \cite{bao2019computing} the authors propose to minimize the energy functional (\ref{eq:HOI_E}) by reformulating it as $E(\rho)=\int |\nabla \sqrt{\rho}|^2+V\rho+\frac{\beta}{2} \rho^2 + \frac{\delta}{2}\left|\nabla \rho\right|^2$, where $\rho:=|u|^2$. This reformulation facilitates the minimization, but it also suffers from the lack of continuity of $|\nabla \sqrt{\rho}|$ near $\rho \to 0^+$. This has to be treated with extra care, and a regularization term has to be added, which complicates the analysis. Therefore, instead of replacing $|u|^2$ with $\rho$, we propose to minimize $E(u)$ with respect to $u$ directly with the Sobolev PGD. 

Assume that Assumptions \ref{ass:GP} still hold. Define the manifold $\M$ with an extra constraint:
\begin{align*}
    \M := \left\{ z\in H_0^1(\Omega): \, \|u\|_{L^2}=1, \, \|u\|_{L^\infty}\le M_0, \, \|\nabla u\|_{L^\infty}\le M_1 \right\}.
\end{align*}
Define the adaptive linearized operator and the respective inner products as follows:
\begin{align*}
    &(z, w)_{a_u} := \int_{\Omega} \nabla z \nabla w + V z w + \beta u^2 z w + \delta \nabla(u z) \nabla(u w),  \\
    &(z, \A_u w)_{L^2} := (z, w)_{a_u}, \\
    &(z, w)_{a_0} := \int_{\Omega} \nabla z \nabla w + V z w,  \qquad \forall\, z,\,w \in \T_u\M,\quad u\in\M.
\end{align*}
Then we have the following results. 
\begin{lemma}
    The ground state $v$ of (\ref{eq:HOI}) satisfies $v>0$ everywhere on $\Omega$. It is the unique positive eigenstate of (\ref{eq:HOI}). It is also both the unique ground state of  (\ref{eq:HOI}) and that of the linearized operator $\A_v$ up to the sign.
\end{lemma}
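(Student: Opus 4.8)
The plan is to follow the proof of Theorem~\ref{thm:ground} step by step, keeping track at each stage of the new contributions of the higher-order term $\frac{\delta}{2}\int|\nabla|u|^2|^2$ both to the energy (\ref{eq:HOI_E}) and to the linearized operator $\A_v$. First I would settle existence: under Assumptions~\ref{ass:GP} all four integrands in (\ref{eq:HOI_E}) are nonnegative, so $E$ is bounded below and coercive on $\M$, and combining the constraints $\|u\|_{L^\infty}\le M_0$ and $\|\nabla u\|_{L^\infty}\le M_1$ (equicontinuity via Arzelà--Ascoli) with weak lower semicontinuity of the Dirichlet-type terms yields a minimizer $v$, as already shown in \cite{bao2019ground}. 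Since $|\nabla|u||=|\nabla u|$ a.e.\ and every remaining integrand depends on $u$ only through $u^2$, we have $E(u)=E(|u|)$ and may take $v\ge 0$; moreover $\A_v$ depends on $v$ only through $v^2$ and a symmetric gradient pairing, so $\A_v=\A_{|v|}$ and this reduction is consistent.

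Next I would treat positivity. The minimizer satisfies $\A_v v=\lambda v$ with Lagrange multiplier $\lambda$, so $v$ is an eigenfunction of the linearized operator. Integrating the last term of $(\cdot,\cdot)_{a_v}$ by parts recasts $\A_v$ in divergence form through the symmetric bilinear form $a_v(z,w)=\int_\Omega (1+\delta v^2)\nabla z\cdot\nabla w+\delta v\nabla v\cdot(z\nabla w+w\nabla z)+(V+\beta v^2+\delta|\nabla v|^2)zw$, whose principal part $(1+\delta v^2)$ is uniformly elliptic, with a genuinely new first-order drift $\delta v\nabla v$ and zeroth-order potential $V+\beta v^2+\delta|\nabla v|^2$. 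Quasilinear elliptic bootstrapping (the leading coefficient depends on $v$) upgrades $v\in H_0^1\cap L^\infty$ to $v\in C^{1,\alpha}(\bar\Omega)$, so that $v,\nabla v\in L^\infty$ and all these coefficients are bounded and measurable; the weak Harnack inequality \cite[Cor.~8.21]{gilbarg2015elliptic} applied to $(\A_v-\lambda)v=0$ with $v\ge 0$, $v\not\equiv 0$, then yields $v>0$ on $\Omega$.

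For the double ground state property I would observe that, since $v\ge 0$, one has $a_v(|z|,|z|)=a_v(z,z)$: indeed $v|z|=|vz|$ and $|\nabla|vz||=|\nabla(vz)|$ a.e., so the higher-order term is unchanged, and likewise for the others. Hence a minimizer of the Rayleigh quotient of $\A_v$ can be taken nonnegative, Harnack makes it strictly positive, and two $L^2$-orthogonal positive functions cannot coexist, so the ground state of $\A_v$ is positive and simple; being itself a positive eigenfunction of $\A_v$, $v$ must coincide with it. For uniqueness I would deliberately \emph{not} imitate the $a_u$-comparison used for (\ref{eq:eig_original}), because here the surplus term $\frac{\delta}{2}\int_\Omega(|\nabla u^2|^2+|\nabla w^2|^2-2|\nabla(uw)|^2)$ need not be signed. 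Instead I would pass to the density $\rho=u^2$ and use that $\tilde E(\rho)=\int_\Omega|\nabla\sqrt\rho|^2+V\rho+\frac{\beta}{2}\rho^2+\frac{\delta}{2}|\nabla\rho|^2$ is strictly convex on the convex set $\{\rho\ge0:\int_\Omega\rho=1,\ \sqrt\rho\in H_0^1\}$ (the Fisher-information term $\int|\nabla\sqrt\rho|^2$ is convex, and $\frac{\beta}{2}\int\rho^2$, $\frac{\delta}{2}\int|\nabla\rho|^2$ are strictly convex; cf.\ the $\rho$-formulation of \cite{bao2019computing}). This gives a unique minimizing $\rho^*$, hence a unique nonnegative minimizer $v=\sqrt{\rho^*}$; and since $v>0$, the substitution $\rho=u^2$ is locally invertible and carries constrained critical points of $E$ in $u$ to those of $\tilde E$ in $\rho$, so convexity forces every positive eigenstate to be the global minimizer.

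The main obstacle I expect is the positivity step: one must produce enough regularity of $v$ (a bounded gradient) to realize $\A_v-\lambda$ as a uniformly elliptic divergence-form operator with bounded coefficients, including the new drift $\delta v\nabla v$, before the Harnack inequality can be invoked. This is exactly where the extra manifold constraint $\|\nabla u\|_{L^\infty}\le M_1$ and the regularity theory of \cite{bao2019ground} are needed, and it is the principal way in which the higher-order interaction complicates the argument relative to the Gross--Pitaevskii case.
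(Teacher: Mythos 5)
Your first three steps coincide with the paper's proof. The paper likewise takes a nonnegative minimizer (via $E(u)=E(|u|)$ and the arguments of \cite{cances2010numerical}), invokes \cite[Theorem~2.2]{bao2019ground} to get $v\in C^{1,1}(\bar\Omega)$ so that $v,\nabla v\in L^\infty(\Omega)$, applies the Harnack inequality to $(\A_v-\lambda)v=0$ to conclude $v>0$, and obtains the ``double ground state'' property by the same symmetrization $a_v(|z|,|z|)=a_v(z,z)$ that you use. Writing out the divergence-form coefficients of $\A_v$ explicitly is a helpful addition but not a different idea.

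You genuinely depart from the paper on the uniqueness of the positive eigenstate, and your stated reason for doing so conflates two different computations. The paper \emph{does} run the Picone argument for (\ref{eq:HOI}): assuming a second positive eigenstate $\tilde v$ with $E(\tilde v)>E(v)$, it compares $(\tilde v,\, v^2/\tilde v)_{a_{\tilde v}}$ with $(v,v)_{a_v}$. There the $\delta$-contribution to the cross term is $\delta\int_\Omega \nabla(\tilde v^2)\cdot\nabla(v^2)$, which is bounded by $\frac{\delta}{2}\int_\Omega\bigl(|\nabla(\tilde v^2)|^2+|\nabla(v^2)|^2\bigr)$ by Cauchy--Schwarz, in exact parallel with $\beta\int_\Omega\tilde v^2v^2\le\frac{\beta}{2}\int_\Omega(\tilde v^4+v^4)$; this yields $E(\tilde v)\le E(v)$, the desired contradiction. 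The unsigned combination you point to, $|\nabla(u^2)|^2+|\nabla(w^2)|^2-2|\nabla(uw)|^2$, never enters this computation. It is the obstruction in a \emph{different} argument: the comparison $E(w)-E(u)\ge (w,w)_{a_u}-(u,u)_{a_u}$ (used in the paper's alternative route ``ground state of $\A_u$ implies global minimizer,'' and in extending Condition (\ref{eq:L}) of Theorem \ref{thm:GP-L} to the energy (\ref{eq:HOI_E})). So Picone extends cleanly here and is the shorter route.

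Your replacement argument via strict convexity of $\tilde E(\rho)$ is workable in principle---the convexity facts are correct, and uniqueness of the nonnegative minimizer follows---but it has a gap precisely at its decisive step. The assertion that ``the substitution $\rho=u^2$ carries constrained critical points of $E$ to those of $\tilde E$, so convexity forces every positive eigenstate to be the global minimizer'' is where the work lies. The eigenvalue equation for $\tilde v$ gives vanishing directional derivatives of $\tilde E$ at $\tilde\rho:=\tilde v^2$ only along perturbations $\sigma$ for which $\sigma/\tilde v$ is an admissible variation (say, compactly supported with $\int_\Omega\sigma=0$). To invoke convexity you must take $\sigma=\rho^*-\tilde\rho$ with $\rho^*$ the minimizer; this direction reaches $\partial\Omega$, and the difference-quotient limit involves the ratio $(\rho^*-\tilde\rho)/\tilde\rho$, which degenerates where $\tilde v\to 0$. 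One needs Hopf's boundary point lemma (both $v$ and $\tilde v$ are positive $C^{1,1}$ solutions vanishing linearly at $\partial\Omega$) to see that $\rho^*/\tilde\rho$ stays bounded and to pass to the limit. This is the same boundary-admissibility care that the Picone identity itself requires, so your route is not cheaper; once this is supplied it does close, and it buys the slightly stronger statement, in the spirit of \cite{bao2019computing}, that every positive constrained critical point is the global minimizer.
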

\begin{proof}
    Following the same arguments as in Lemma 2 in \cite{cances2010numerical}, the extended $E(u)$ as in (\ref{eq:HOI_E}) still admits a nonnegative minimizer $v$. According to \cite[Theorem~2.2]{bao2019ground}, we know that $v \in C^{1,1}(\bar{\Omega})$. This implies that $v$, $\nabla v \in L^\infty(\Omega)$. Thus, the nonnegative $v$ can still be made positive by the Harnack inequality. Also, the linearized operator $\A_v$ still has a unique positive ground state, which is exactly the above $v$. Thus the ``double ground state'' property remains true.
    
    We now show that (\ref{eq:HOI}) has a unique positive eigenstate by a contradiction argument. Suppose instead that there is a different positive eigenstate $\tilde{v}>0$ with its eigenvalue $\tilde{\lambda}$, and $E(\tilde{v}) > E(v)$. Using the Picone identity, $\int \nabla \tilde{v} \nabla (\frac{v^2}{\tilde{v}}) \le \int (\nabla v)^2$. We have
    \begin{align*}
        & \tilde{\lambda} - \lambda = \tilde{\lambda} (v,v)_{L^2} - (v,v)_{a_v}
        = \tilde{\lambda} \left(\tilde{v}, \frac{v^2}{\tilde{v}}\right)_{L^2} - (v,v)_{a_v}
        =  \left(\tilde{v},\frac{v^2}{\tilde{v}}\right)_{a_{\tilde{v}}} - (v,v)_{a_v} \\
        & = \int \nabla \tilde{v} \cdot \nabla \left(\frac{v^2}{\tilde{v}}\right) + V v^2 + \beta  \tilde{v}^2v^2 + \delta \nabla (\tilde{v}^2) \nabla (v^2) - \int (\nabla v)^2+Vv^2 + \beta v^4 + \delta (\nabla (v^2))^2 \\
        & \le \int (\nabla v)^2 + V v^2 + \frac{\beta}{2}(v^4 + \tilde{v}^4) +\frac{\delta}{2}\left((\nabla(v^2))^2+(\nabla (\tilde{v}^2))^2\right) - \int (\nabla v)^2+Vv^2 + \beta v^4 + \delta (\nabla (v^2))^2 \\
        & = \int  \frac{\beta}{2} \tilde{v}^4 +\frac{\delta}{2}(\nabla (\tilde{v}^2))^2 - \int \frac{\beta}{2} v^4 + \frac{\delta}{2} (\nabla (v^2))^2 = (\tilde{\lambda} - E(\tilde{v})) - (\lambda - E(v)),
    \end{align*}
    i.e.,
    \begin{align*}
        E(\tilde{v}) \le E(v).
    \end{align*}
    This contradicts our assumption that $E(\tilde{v}) > E(v)$.
\end{proof}

The next lemma shows that the eigenvalue and eigenfunction perturbation results stated in Lemma \ref{lemma:perturb} hold similarly for (\ref{eq:HOI}).
\begin{lemma}
\label{lemma:HOI}
    Let $\lambda_i$ and $\mu_i$ be the $i$-th smallest eigenvalues of $\A_v$ and $\A_u$ respectively, and $v_i$ and $w_i$ be their corresponding eigenvectors (so that $v = v_1$). Let $C_v := \lambda_2 - \lambda_1$ denote the eigenvalue gap. Then there exists a positive constant $C  = C(\beta, \delta, V, M_0, M_1, \Omega, \lambda_1, C_v)$, such that for all $\|u-v\|_{H^1}<C$, $u\in\M$, we have $\|u-w_1\|_{L^2}\le s$ for some $s<1$. 
\end{lemma}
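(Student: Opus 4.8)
The plan is to mirror the proof strategy of Lemma \ref{lemma:perturb}, which is cited but proved in the appendix, and adapt it to account for the extra higher-order interaction term $\delta\nabla(|u|^2)\nabla(|u|^2)$. The key observation is that $\|u-w_1\|_{L^2}$ is controlled once we establish that $\A_u$ and $\A_v$ are spectrally close: specifically, that both the ground-state eigenvalue gap persists under perturbation and that the ground eigenfunction $w_1$ of $\A_u$ stays $L^2$-close to $v=v_1$. So first I would quantify the operator perturbation $\A_u - \A_v$ in an appropriate sense. Writing out the difference of the bilinear forms,
\begin{align*}
    (z,z)_{a_u} - (z,z)_{a_v} = \int_\Omega \beta(u^2-v^2)z^2 + \delta\left(|\nabla(uz)|^2 - |\nabla(vz)|^2\right),
\end{align*}
I would bound each term using $\|u-v\|_{H^1}$ together with the manifold constraints $\|u\|_{L^\infty}\le M_0$, $\|\nabla u\|_{L^\infty}\le M_1$ (and the analogous bounds for $v$, which hold since $v\in C^{1,1}(\bar\Omega)$). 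The first term is straightforward via $u^2-v^2=(u-v)(u+v)$ and the $L^\infty$ bounds. The genuinely new term is the $\delta$-term: expanding $\nabla(uz)=z\nabla u + u\nabla z$ produces cross terms, and the difference involves $\nabla(u-v)$ and $(u-v)$ paired against $z$ and $\nabla z$; here the $\|\nabla u\|_{L^\infty}\le M_1$ constraint is exactly what is needed to keep these terms controlled by $\|u-v\|_{H^1}\|z\|_{H^1}^2$.

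Next I would convert this form-perturbation bound into an eigenvalue/eigenfunction perturbation statement. Using the norm equivalence of Lemma \ref{lemma:equiv} (whose analogue holds here, now depending additionally on $\delta$ and $M_1$) to control everything in the $H^1$ topology, the above estimate shows $|\mu_i - \lambda_i|$ is $O(\|u-v\|_{H^1})$ for the lowest eigenvalues via the min-max characterization. In particular, by choosing $\|u-v\|_{H^1}<C$ small enough relative to the spectral gap $C_v=\lambda_2-\lambda_1$, the gap $\mu_2-\mu_1$ stays bounded below by, say, $C_v/2$. Then a standard resolvent/spectral-projector argument (Davis--Kahan-type, or the variational argument used in Lemma \ref{lemma:perturb}) gives $\|w_1 - v_1\|_{L^2}=O(\|u-v\|_{H^1}/C_v)$, up to a sign choice for $w_1$. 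Combining with $\|u-v_1\|_{L^2}=\|u-v\|_{L^2}\le\|u-v\|_{H^1}$ and the triangle inequality yields $\|u-w_1\|_{L^2}\le O(\|u-v\|_{H^1})$, which can be made $\le s<1$ by shrinking $C$.

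The main obstacle I anticipate is the careful handling of the $\delta$-term in the bilinear form, since unlike the Gross-Pitaevskii case the perturbation now involves gradients of $u$ rather than just pointwise values, so bounding it requires the additional manifold constraint $\|\nabla u\|_{L^\infty}\le M_1$ and a more delicate splitting of $\nabla(uz)-\nabla(vz)$. One must verify that this term still yields a bound linear in $\|u-v\|_{H^1}$ (not merely in a stronger norm), which is precisely why the $C^{1,1}$ regularity of $v$ from \cite{bao2019ground} and the $M_1$ constraint are invoked. A secondary technical point is ensuring the constant $C$ depends only on the stated quantities $(\beta,\delta,V,M_0,M_1,\Omega,\lambda_1,C_v)$ and not on $u$ itself; this follows because all the bounds above are uniform over $u\in\M$ thanks to the manifold constraints. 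Since these are the same structural ingredients that make Lemma \ref{lemma:perturb} work, I expect the proof to go through by essentially reproducing that argument with the $\delta$-term estimated as above, and would defer the routine computations to an appendix.
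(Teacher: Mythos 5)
Your overall architecture is the same as the paper's (the appendix proof of Lemma \ref{lemma:HOI} is literally ``repeat Lemma \ref{lemma:perturb}, bounding the new $\delta$-term via the $L^\infty$ bounds $M_0$ on $u,v$ and $M_1$ on $\nabla u,\nabla v$''), but the quantitative claim your plan hinges on contains a genuine gap: the uniform form-perturbation bound
\begin{equation*}
  \bigl|(z,z)_{a_u}-(z,z)_{a_v}\bigr| \;\le\; C\,\|u-v\|_{H^1}\,\|z\|_{H^1}^2
  \qquad \text{for all } z\in H_0^1(\Omega)
\end{equation*}
is not attainable in dimensions $2$ and $3$. Expanding $|\nabla(uz)|^2-|\nabla(vz)|^2$ produces, among others, the terms
\begin{equation*}
  \int_\Omega (u+v)(u-v)\,|\nabla z|^2
  \qquad\text{and}\qquad
  \int_\Omega z\,(u+v)\,\nabla(u-v)\cdot\nabla z ,
\end{equation*}
and with only $\nabla(u-v)\in L^2$ and $z\in H^1$, H\"older forces you to pay $\|u-v\|_{L^\infty}$ (respectively $\|z\|_{L^\infty}$) on these terms; since $H^1\not\hookrightarrow L^\infty$ for $d\ge 2$, the constraint $\|\nabla u\|_{L^\infty}\le M_1$ is \emph{not} ``exactly what is needed'' here. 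Interpolating against the $W^{1,\infty}$ bounds only yields a sublinear power of $\|u-v\|_{H^1}$ (e.g.\ $\|\nabla(u-v)\|_{L^3}\lesssim M_1^{1/3}\|u-v\|_{H^1}^{2/3}$). This matters because your primary route to $\|w_1-v_1\|_{L^2}$ is a Davis--Kahan/resolvent argument, which is precisely the step that consumes a uniform operator-level perturbation bound; as stated, that step does not close. (A sublinear but vanishing modulus would still suffice for the purely qualitative conclusion $\|u-w_1\|_{L^2}\le s<1$, so the gap is repairable, but the repair has to be made explicitly.)

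The paper sidesteps this issue entirely by never bounding the form difference uniformly over $z$: it evaluates $(\cdot,\cdot)_{a_u}-(\cdot,\cdot)_{a_v}$ only at the specific functions $v$, $u$, $w_1$, $w_2$, each of which has the pointwise control needed to make every term genuinely linear in $\|u-v\|_{H^1}$ ($u$ and $v$ through the constraints defining $\M$ and $v\in C^{1,1}(\bar\Omega)$; $w_1,w_2$ through elliptic regularity as in the proof of Lemma \ref{lemma:perturb}). These evaluations feed the min--max characterization to give $\mu_1\le\lambda_1+\tfrac16 C_v$, $\lambda_1+\lambda_2\le\mu_1+\mu_2+\tfrac16 C_v$, $\lambda_1\le\mu_1+\tfrac16C_v$, and $(u,u)_{a_u}-\lambda_1\le\tfrac1{12}C_v$, whence $(u,u)_{a_u}-\mu_1\le\tfrac12(\mu_2-\mu_1)$. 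The conclusion is then reached not by comparing $w_1$ with $v_1$ and using a triangle inequality, but by the variational argument ending Lemma \ref{lemma:perturb}: expand $u=\sum_i c_i w_i$ in the eigenbasis of $\A_u$, deduce $c_1\ge 1/\sqrt{2}$, and obtain $\|u-w_1\|_{L^2}\le\sqrt{2-\sqrt{2}}<1$. If you replace your uniform bound and Davis--Kahan step by this ``test only against regular functions, then argue variationally'' scheme, your estimation of the $\delta$-term (which is correct for such test functions) goes through and your proof coincides with the paper's.
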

\begin{proof}
    See Appendix \ref{sec:proof_lemma:HOI}.
\end{proof}

\begin{theorem}
    If the initial state satisfies $u_0 \ge 0$ everywhere on $\Omega$, then $\{u_n\}_{n=0}^\infty$ generated by the Sobolev PGD with step size $0<\tau_{\text{min}}\le \tau_n \le \tau_{\text{max}}$ converges to the unique ground state $v$ of (\ref{eq:HOI}) with an asymptotic exponential convergence rate.
\end{theorem}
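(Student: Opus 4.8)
The plan is to mirror the three-step proof of Corollary~\ref{col:global}. First I would show that the iterates remain nonnegative and converge strongly in $H^1$ to the unique ground state $v$, reproducing Lemma~\ref{lemma:nonnegative}; then verify the triplet (\ref{eq:L}), (\ref{eq:D}), (\ref{eq:S}) with $\|\cdot\|_X=\|\cdot\|_{a_u}$ and $\|\cdot\|_Y=\|\cdot\|_{a_0}$; and finally apply Theorem~\ref{thm:LDS} with $\theta=\tfrac12$ to get the asymptotic exponential rate. The preceding uniqueness/double-ground-state lemma for (\ref{eq:HOI}) and Lemma~\ref{lemma:HOI} supply the two structural facts the abstract machinery needs.

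The nonnegativity step carries over after one check. The induction of Lemma~\ref{lemma:nonnegative} rests on $\phi(|y|)\le\phi(y)$ for $\phi(y)=(y,y)_{a_{u_n}}-2(y,u_n)_{L^2}$. The only new contribution is $\delta\int|\nabla(u_n y)|^2$; since $u_n\ge0$ one has $u_n|y|=|u_n y|$ pointwise, so $|\nabla(u_n|y|)|=|\nabla(u_n y)|$ a.e.\ and this term is invariant under $y\mapsto|y|$, leaving $\phi(|y|)\le\phi(y)$ intact. Hence $\mathcal{G}_{u_n}u_n\ge0$, and (provided $\tau_n\le1$, as in Lemma~\ref{lemma:nonnegative}) the weighted-average form of the update gives $u_{n+1}\ge0$. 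Positivity of the cluster point and the double-ground-state property then force the limit to be $v$.

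Conditions (\ref{eq:D}) and (\ref{eq:S}) should go through essentially verbatim. A first-variation computation confirms that the defining identity $\nabla_{a_u}E(u)=u$ persists for (\ref{eq:HOI_E}), the extra term $\delta\nabla(uz)\nabla(uw)$ of $a_u$ being exactly the linearization of $\tfrac{\delta}{2}\int|\nabla(u^2)|^2$; so the manifold gradient keeps the form (\ref{eq:grad}) and $\|\text{grad }E(u)\|_{a_u}^2=(u,u)_{a_u}-\tfrac{1}{(u,\mathcal{G}_u u)_{L^2}}$. The retraction is again second order since $\|\xi\|_{a_u}^2\ge\|\nabla\xi\|_{L^2}^2$ and Poincar\'e. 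The norm equivalence $\|\cdot\|_{a_0}\sim\|\cdot\|_{a_u}\sim\|\cdot\|_{H^1}$ --- the one place where the extra constraint $\|\nabla u\|_{L^\infty}\le M_1$ on $\M$ is used --- follows from $\delta\int|\nabla(u\xi)|^2\le2\delta\big(M_0^2\|\nabla\xi\|_{L^2}^2+M_1^2\|\xi\|_{L^2}^2\big)$, after which the Taylor-expansion estimates of Theorems~\ref{thm:GP-D} and~\ref{thm:GP-S} apply unchanged.

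The real difficulty is condition (\ref{eq:L}). Following Theorem~\ref{thm:GP-L} I would reduce it to the purely linear inequality (\ref{eq:proof3}) for $\A_u$, which is insensitive to the nonlinearity and follows from Lemma~\ref{lemma:linear} once Lemma~\ref{lemma:HOI} provides $\|u-w_1\|_{L^2}\le s<1$ near $v$. The obstacle is the preceding energy comparison $E(u)-E(v)\le(u,u)_{a_u}-(v,v)_{a_u}$. For the quartic term the identity $-\tfrac{\beta}{2}\int(u^2-v^2)^2\le0$ still works, but for the higher-order term the analogous remainder equals $-\tfrac{\delta}{2}\int|\nabla(u^2-v^2)|^2+\delta\int|u\nabla v-v\nabla u|^2$, whose second term is nonnegative and breaks the inequality; the reason is that the form $\delta|\nabla(uv)|^2$ hard-wired into $a_u$ is not the convexity tangent $\delta\nabla(u^2)\cdot\nabla(v^2)$ of the density functional $\tfrac{\delta}{2}\int|\nabla\rho|^2$, $\rho=u^2$. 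To attack this I would use that $E$ is convex as a functional $\tilde E$ of the density $\rho$ (the Fisher-type term $\int|\nabla\sqrt{\rho}|^2$ being convex in $\rho$): writing $\rho_u=u^2$, $\rho_v=v^2$, convexity gives $E(u)-E(v)\le(u,u)_{a_u}-\langle\tilde E'(\rho_u),\rho_v\rangle$, and Picone's inequality $\int\nabla u\cdot\nabla(v^2/u)\le\int|\nabla v|^2$ (already used in the uniqueness lemma) together with the pointwise identity $\nabla(u^2)\cdot\nabla(v^2)-|\nabla(uv)|^2=-|u\nabla v-v\nabla u|^2\le0$ yields $\langle\tilde E'(\rho_u),\rho_v\rangle\le(v,v)_{a_u}$. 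This convexity bound is, however, weaker than $(u,u)_{a_u}-(v,v)_{a_u}$, so the crux --- and the step I expect to be hardest --- is to absorb the discrepancy $\delta\int|u\nabla v-v\nabla u|^2$ quantitatively in the regime $\|u-v\|_{H^1}\le C$, using the spectral gap $C_v$ of $\A_v$ and the smallness window of Lemma~\ref{lemma:HOI}, so as to recover $E(u)-E(v)\le C_L\|\text{grad }E(u)\|_{a_u}^2$.
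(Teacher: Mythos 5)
Your proposal reproduces the paper's architecture exactly: the paper's own proof is a short paragraph asserting precisely the chain you outline --- positivity is preserved ``as before'' so the iterates converge strongly in $H^1$ to the unique ground state $v$; the nonnegativity of $\delta$ gives the equivalence of the $a_0$- and $a_u$-norms, hence Conditions (\ref{eq:D}) and (\ref{eq:S}); Condition (\ref{eq:L}) is claimed to follow from Lemma \ref{lemma:HOI} and Lemma \ref{lemma:linear}; and Theorem \ref{thm:LDS} with $\theta=\tfrac12$ closes the argument. Your detailed checks (that $\nabla_{a_u}E(u)=u$ and the gradient formula (\ref{eq:grad}) persist for (\ref{eq:HOI_E}), that the $y\mapsto|y|$ induction survives the new term $\delta\int|\nabla(u_n y)|^2$ when $u_n\ge 0$, and that norm equivalence uses the extra constraint $\|\nabla u\|_{L^\infty}\le M_1$) are all correct and go beyond what the paper records.

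The divergence is at Condition (\ref{eq:L}), and your concern there is genuine rather than a defect of your write-up. Lemma \ref{lemma:HOI} and Lemma \ref{lemma:linear} together yield only $(u,u)_{a_u}-(w_1,w_1)_{a_u}\le C_L\|\text{grad }E(u)\|_{a_u}^2$; to reach $E(u)-E(v)$ one still needs the energy comparison $E(u)-E(v)\le (u,u)_{a_u}-(v,v)_{a_u}$, which is the first step of Theorem \ref{thm:GP-L}, and your algebra showing it breaks is right:
\begin{align*}
E(u)-E(v)-\bigl[(u,u)_{a_u}-(v,v)_{a_u}\bigr]=-\frac{\beta}{2}\int_\Omega(u^2-v^2)^2-\frac{\delta}{2}\int_\Omega\lab\nabla(u^2-v^2)\rab^2+\delta\int_\Omega\lab u\nabla v-v\nabla u\rab^2,
\end{align*}
whose last term is nonnegative, so the pointwise sign argument does not carry over. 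The paper's proof simply does not address this step, so you have exposed a gap in the paper's own argument rather than failed to reproduce a step it supplies. Your proposed repair via convexity in $\rho=u^2$ and Picone is, as you admit, not closed: it recovers the same bound up to the same discrepancy $\delta\int|u\nabla v-v\nabla u|^2$. Completing the proof needs a quantitative ingredient absent from the paper, for instance a gradient-dominates-distance estimate $\delta\int_\Omega|u\nabla v-v\nabla u|^2\le C\|\text{grad }E(u)\|_{a_u}^2$ near $v$, or else one can start from the exact identity
\begin{align*}
E(u)-E(v)=\bigl[(u,u)_{a_v}-(v,v)_{a_v}\bigr]+\frac{\beta}{2}\int_\Omega(u^2-v^2)^2+\frac{\delta}{2}\int_\Omega\lab\nabla(u^2-v^2)\rab^2-\delta\int_\Omega\lab u\nabla v-v\nabla u\rab^2,
\end{align*}
in which the troublesome term enters with a favorable sign; but this trades the difficulty for a new one, since Lemma \ref{lemma:linear} applied to $\A_v$ (whose ground state is $v$ itself, by the double-ground-state lemma) controls the $a_v$-residual $(u,u)_{a_v}-1/(u,\G_v u)_{L^2}$, and one must then relate that residual to $\|\text{grad }E(u)\|_{a_u}$ by a perturbation argument. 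Your assessment is accurate: this absorption is the crux, and as written both your proposal and the paper leave it open.
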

\begin{proof}
    First, the Sobolev PGD sequence starting from a positive initial value remains positive as before, and convexity ensures convergence to a nonnegative local minimizer of $E(u)$, which must also be the global minimizer and the ground state of (\ref{eq:HOI}). This convergence can be proved to be a strong $H^1$ convergence by the Sobolev embedding and the convergence of energy. 
    
    In order to establish exponential convergence, it suffices to show that Conditions (\ref{eq:L}), (\ref{eq:D}) and (\ref{eq:S}) all hold for $\{u_n\}_{n=0}^\infty$. {The nonnegativity of $\delta$ ensures the equivalence of $a_0$ and $a_u$ norms.} Thus Conditions (\ref{eq:D}) and (\ref{eq:S}) hold. Condition (\ref{eq:L}) follows from Lemma \ref{lemma:HOI} and Lemma \ref{lemma:linear}.
\end{proof}

The above results establish the exponential convergence of the Sobolev PGD for problem (\ref{eq:HOI}) for any $\delta \ge 0$. 
Numerical evidence shows that the Sobolev PGD for this problem converges very well just as the original Gross-Pitaevskii eigenproblem. This is a demonstration that the Sobolev gradient descent has the potential to be generalized to study some continuous or discrete high degree optimization problems. We believe that this method has the potential to be extended to a broader class of problems as long as certain assumptions are satisfied, which is left for our future work.

\section{Numerical experiments}
\label{sec:numerical}


In this section, we demonstrate the convergence of the Sobolev PGD method using some numerical examples.
We show that exponential convergence rate is attained both for the original eigenproblem (\ref{eq:eig_original}) and for its extension (\ref{eq:HOI}). We also observe and discuss some interesting phenomena that one may encounter in numerical experiments.

\subsection{Gross-Pitaevskii eigenproblem in 2D.}
We first look at the Gross-Pitaevskii eigenproblem (\ref{eq:eig_original}) in two dimensions. Let the domain be $\Omega = [-1, 1]^2 \subset \R^2$ with Dirichlet boundary condition. \rev{The problems are discretized with P1 Lagrange finite element method. The grid is a uniform grid with fixed size $h = 2\cdot 2^{-8}$ throughout this section.}

The first example is a single well potential $V(x) = \frac{1}{2}|x|^2$.
It is well known that the Anderson localization \cite{anderson1958absence} is present in this setting. We set $\beta = 1$. The initial guess $z_0$ is chosen as the eigenvector corresponding to the smallest eigenvalue of $\A_0$. It is strictly positive over the whole domain $\Omega$. The step size is $\tau = 1$. 

Figure \ref{Well_N_256_beta_1-1} shows the profile of the potential $V(x)$. Figure \ref{Well_N_256_beta_1-2} is the profile of the computed ground state with $\beta = 1$. Figure \ref{Well_N_256_beta_1-3} displays the \rev{log $H^1$-error convergence $\text{log}_{10}(\|u_n-v\|_{H^1}/\|v\|_{H^1})$}. It can be seen that the Sobolev PGD converges in just a few steps with an exponential (linear) convergence rate.  

\begin{figure}[ht]
    \centering
    \begin{subfigure}[t]{.32\linewidth}
        \includegraphics[width = \linewidth, align = c]{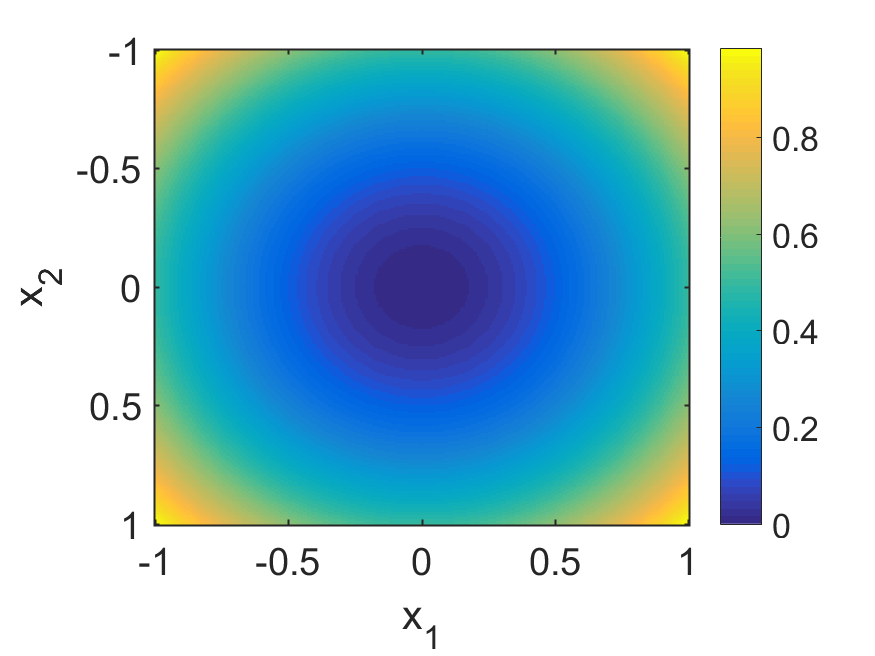}
        \caption{Single well potential $V(x) = \frac{1}{2}|x|^2$}
        \label{Well_N_256_beta_1-1}
    \end{subfigure}
    \hspace{0.01\textwidth}
    \begin{subfigure}[t]{.32\linewidth}
        \includegraphics[width = \linewidth, align = c]{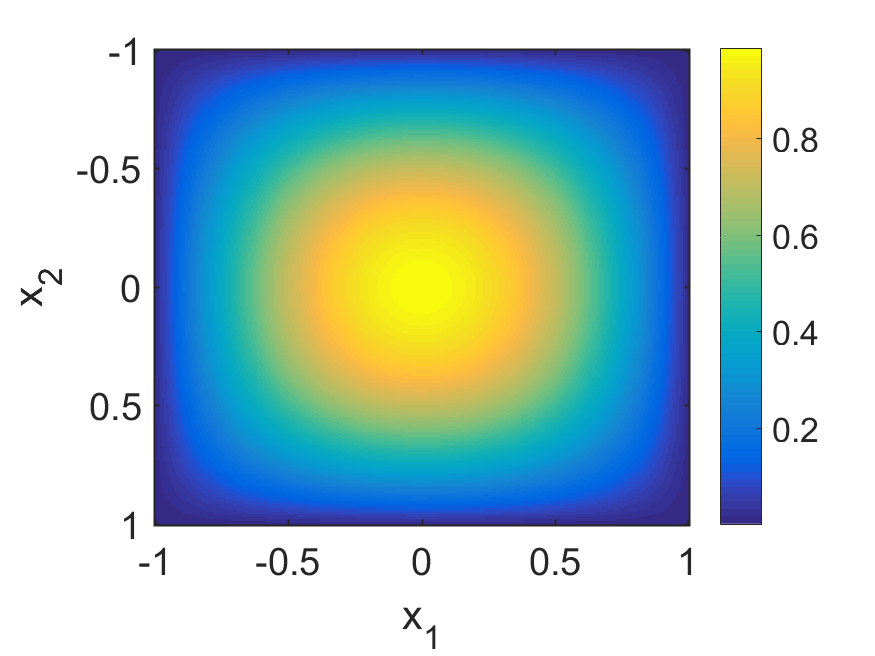}
        \caption{Ground state when $\beta =1$ }
        \label{Well_N_256_beta_1-2}
    \end{subfigure}
    \hspace{0.02\textwidth}
    \begin{subfigure}[t]{.29\linewidth}
        \includegraphics[width = \linewidth, align = c]{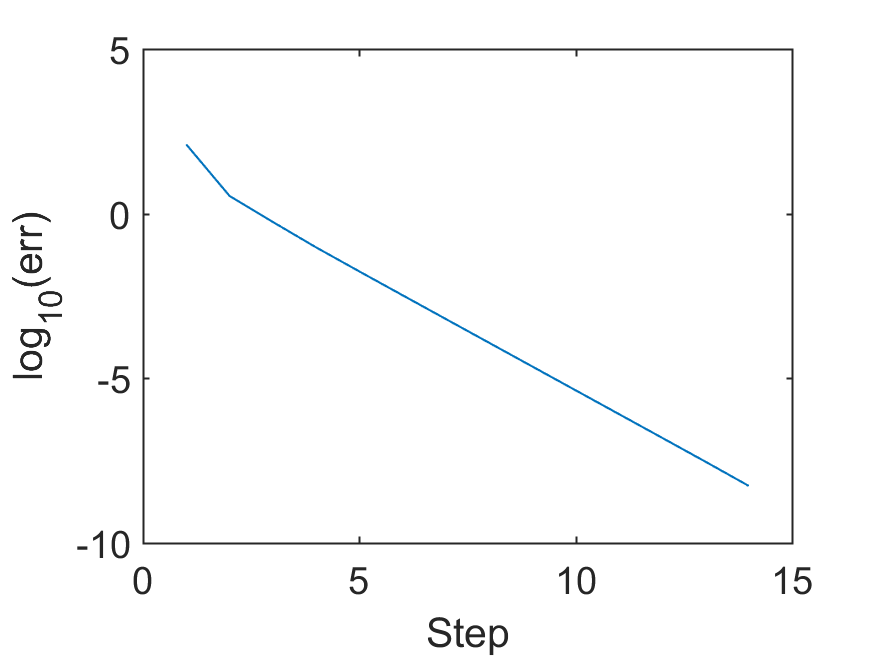}
        \caption{\rev{Log $H^1$-error convergence}}
        \label{Well_N_256_beta_1-3}
    \end{subfigure}
    \caption{Example of (\ref{eq:eig_original}) with single well potential $V = \frac{1}{2}|x|^2$ and $\beta = 1$.}
\end{figure}

By increasing $\beta$, there is a greater nonlinearity in the problem. When $\beta \gg 1$, the quartic term $\frac{\beta}{2} |u|^4$ would dominate the energy functional (\ref{eq:E}). This would be a significant barrier to some traditional methods. Yet the Sobolev PGD remains stable and fast. Figures \ref{Well_N_256_beta_10-2} to \ref{Well_N_256_beta_100-3} show the \rev{log $H^1$-error convergence} and the profiles of the respective ground states with $\beta = 10$ and $\beta = 100$ respectively. With the Sobolev PGD, there is only a mild increase in the computational complexity, and the iteration still converges exponentially fast as predicted.

\begin{figure}[ht]
    \centering
    \begin{subfigure}[t]{.32\linewidth}
        \includegraphics[width = \linewidth, align = c]{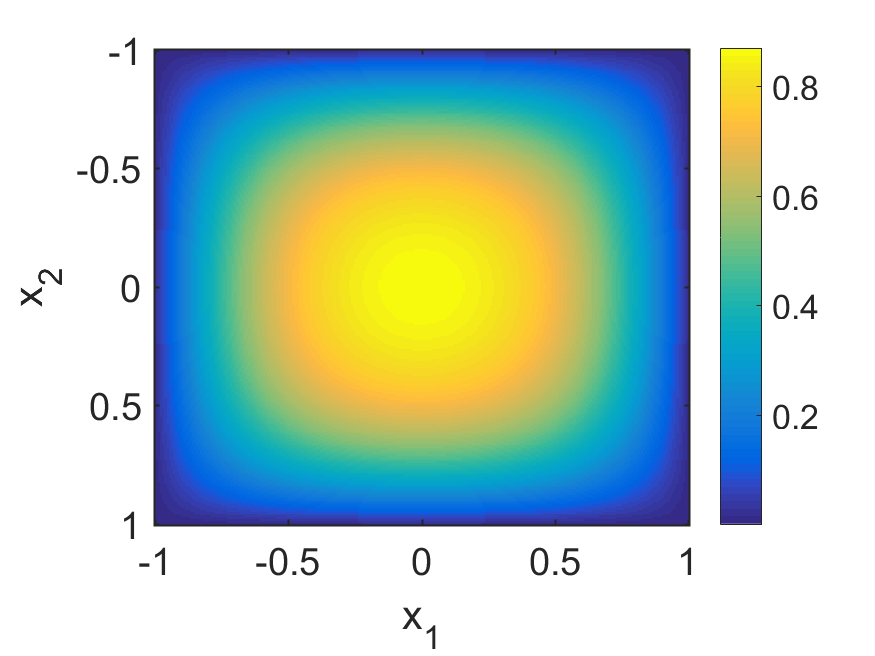}
        \caption{Ground state when $\beta = 10$}
        \label{Well_N_256_beta_10-2}
    \end{subfigure}
    \hspace{0.05\textwidth}
    \begin{subfigure}[t]{.29\linewidth}
        \includegraphics[width = \linewidth, align = c]{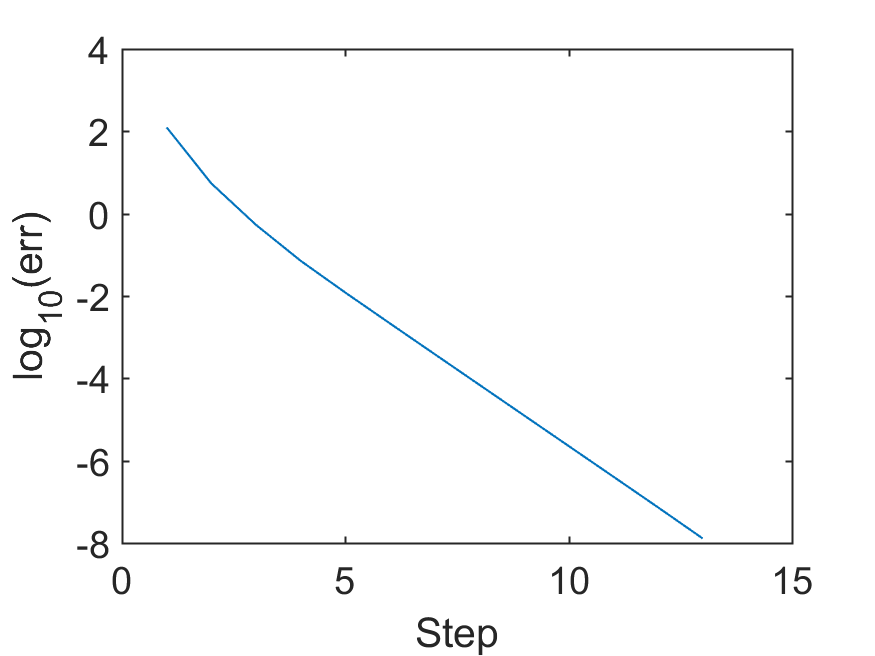}
        \caption{Convergence when $\beta = 10$}
        \label{Well_N_256_beta_10-3}
    \end{subfigure}
    \begin{subfigure}[t]{.32\linewidth}
        \includegraphics[width = \linewidth, align = c]{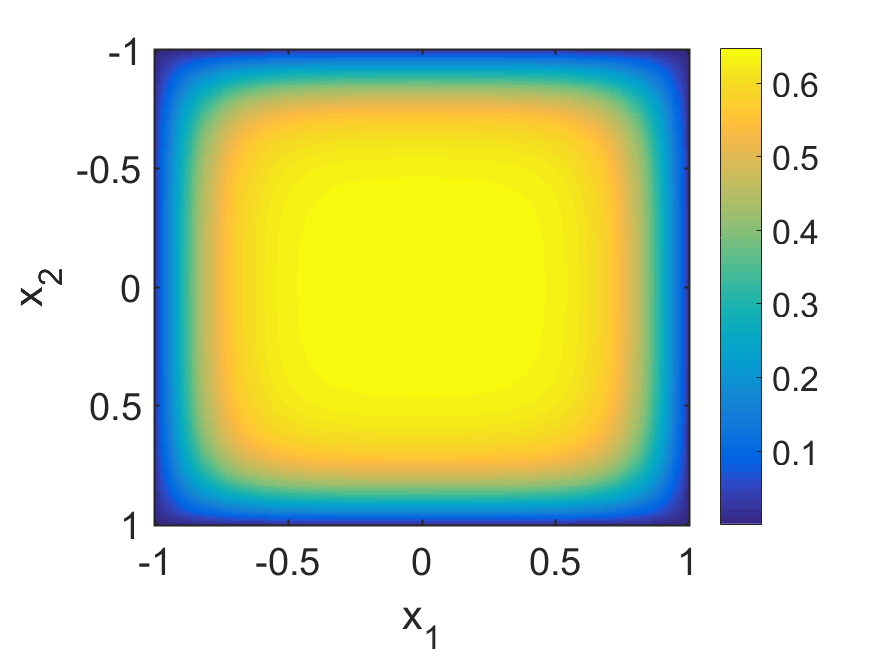}
        \caption{Ground state when $\beta = 100$}
        \label{Well_N_256_beta_100-2}
    \end{subfigure}
    \hspace{0.05\textwidth}
    \begin{subfigure}[t]{.29\linewidth}
        \includegraphics[width = \linewidth, align = c]{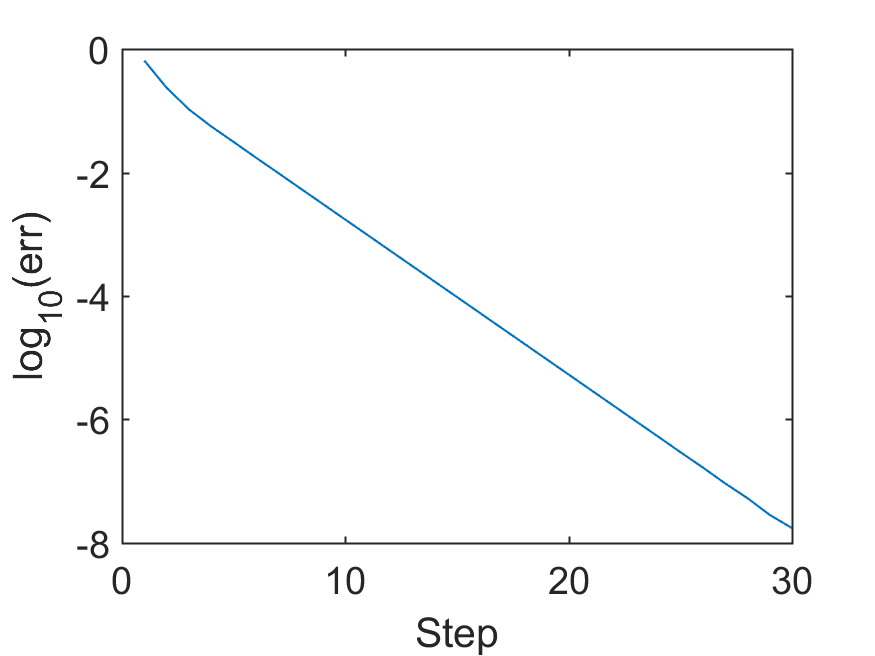}
        \caption{Convergence when $\beta = 100$}
        \label{Well_N_256_beta_100-3}
    \end{subfigure}
    \caption{Example of (\ref{eq:eig_original}) with single well potential $V = \frac{1}{2}|x|^2$ and $\beta = 10$ or $100$.}
\end{figure}


\subsection{Localization under the disordered potential.}
The second example is a disordered potential $V$. Its fully discrete counterpart, the randomized potential on the lattice $\mathbb{Z}^d$, has been extensively studied for its rich behaviour in spectral gaps, exponential localization of eigenstates near the bottom of the spectrum, and implications about the ``mobility edge'' conjecture in quantum physics and random matrix theory \cite{frohlich1983absence,deift2017some}. 

In our semi-lattice example, the localization of the ground state is also present. In the experiment, $V(x)$ is generated as follows. The extent of disorder is determined by a parameter $K = 50$. This means that the domain $\Omega$ is divided into $K\times K$ cells. The value of V(x) in each cell is either $1$ or $1/K^2$, randomly chosen with equal probability.

Figure \ref{Disorder_N_256_beta_0.5-1} shows the profile of $V(x)$. Figure \ref{Disorder_N_256_beta_0.5-5} displays the computed ground state with $\beta = 0.5$. It can be seen that the ground state is concentrated in a small region whose diameter is about a few times the interaction length of the disorder. Figure \ref{Disorder_N_256_beta_0.5-3} shows the convergence rate of the Sobolev PGD iteration for this example. 

To facilitate convergence, we have chosen $\tau = 1.5$. Although Corollary \ref{col:global} requires a small $\tau$, in the numerical experiments we find that choosing $\tau >1$ results in significantly faster convergence. This is in accordance with the empirical findings of \cite{henning2020sobolev}.

\begin{figure}[ht]
    \centering
    \begin{subfigure}[t]{.32\linewidth}
        \includegraphics[width = \linewidth, align = c]{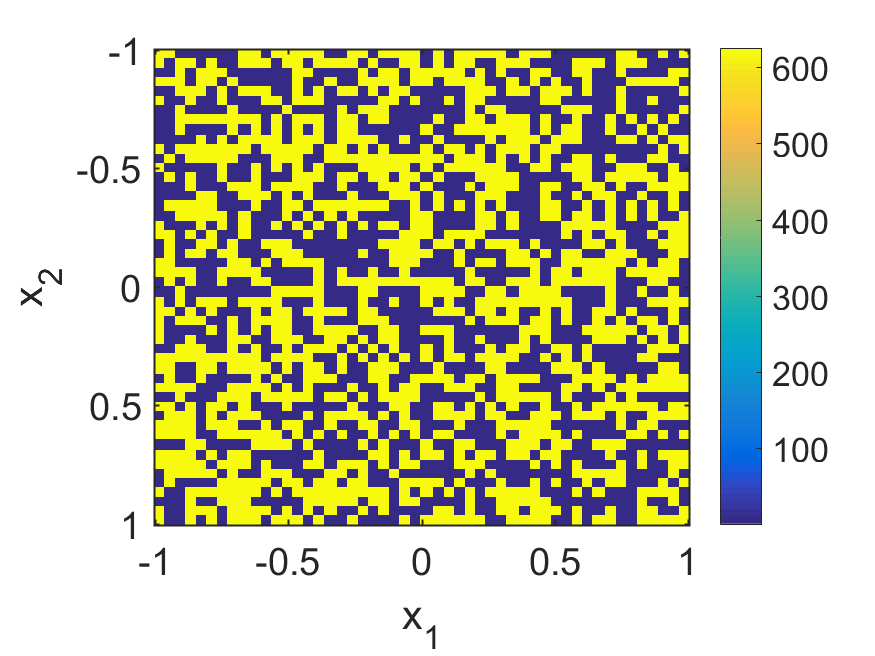}
        \caption{Profile of the disordered potential}
        \label{Disorder_N_256_beta_0.5-1}
    \end{subfigure}
    \hspace{0.01\textwidth}
    \begin{subfigure}[t]{.32\linewidth}
        \includegraphics[width = \linewidth, align = c]{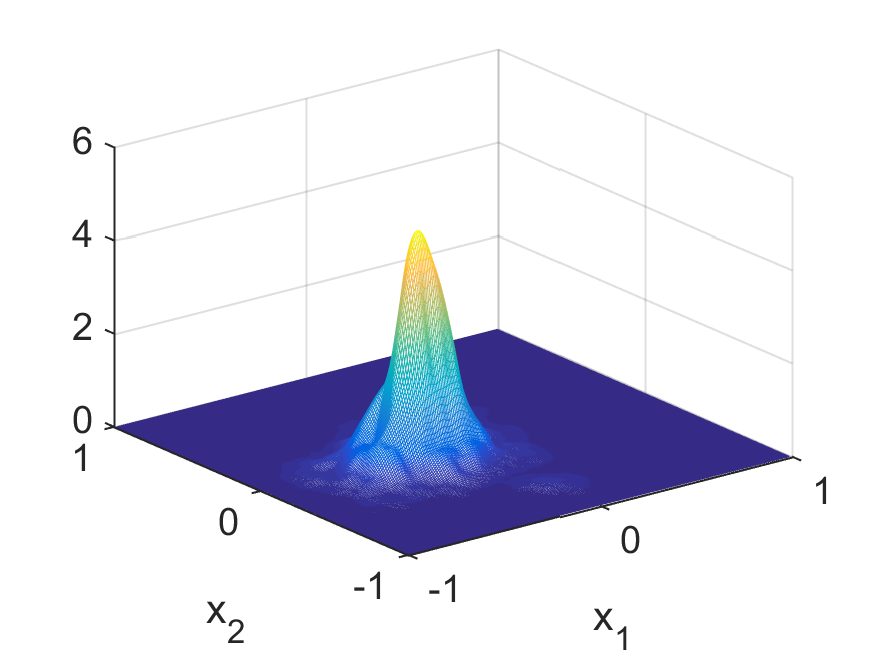}
        \caption{Profile of the ground state}
        \label{Disorder_N_256_beta_0.5-5}
    \end{subfigure}
    \hspace{0.01\textwidth}
    \begin{subfigure}[t]{.29\linewidth}
        \includegraphics[width = \linewidth, align = c]{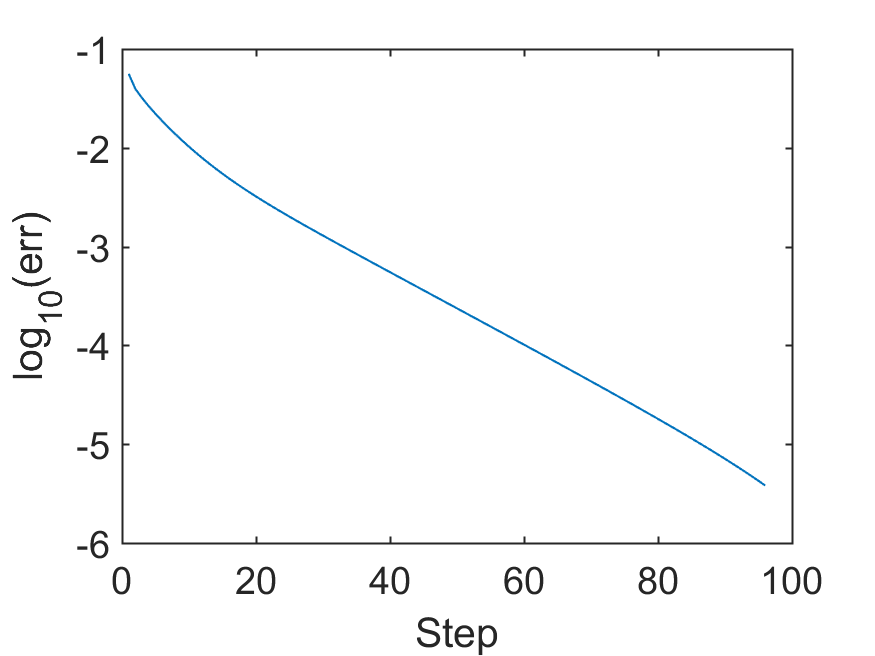}
        \caption{\rev{Log $H^1$-error convergence}}
        \label{Disorder_N_256_beta_0.5-3}
    \end{subfigure}
    \caption{Example of (\ref{eq:eig_original}) with a disordered potential and $\beta = 0.5$.}
\end{figure}


\subsection{Asymptotic escape of Sobolev PGD from saddle states.}
It is interesting to look at the asymptotic behaviour of the Sobolev gradient descent method if starting from a non-positive initial value. Recall that Corollary \ref{col:global} only ensures exponential convergence to the global ground state from $u_0\ge 0$. When this condition is violated, it is a priori unknown what the iteration will converge to. It is possible that there are other spurious fixed points, including local minimizers and saddle points. The first-order condition ensures that all these spurious fixed points are eigenstates. But the convergence rate to such points is unknown. 

As for the spatially discretized case, the Hilbert manifold $\M$ becomes a Riemannian manifold, and the spectra of the operators become finite. As is proved in \cite{paper1} and references therein, a random initialization almost surely avoids saddles and converges only to local or global minimizers. It means that if an excited state is a strict saddle point, then a random initialization is very unlikely to converge to that state. As for the spatially continuous case, it is reasonable to expect the same phenomenon, although the analysis could be more difficult due to the infinite dimension of $\M$ and the infinite number of eigenstates.

In the subsequent numerical experiments, we let $V(x) = \frac{1}{2}|x|^2$ and $\beta = 100$. We will use an example to show that for an excited state that is a strict saddle, it has a very thin converging set close to measure zero. Thus, using Sobolev PGD to compute excited states could be unstable. The accuracy of the computed excited states could be limited.

First, we let the initialization $u_0$ be the second-smallest eigenvector of $\A_0$. This $u_0$ is positive on half of $\Omega$ and negative on the other half. It is displayed in Figure \ref{Np_N_256_beta_100-4}. We then let Sobolev PGD iterate a few steps. Figure \ref{Np_N_256_beta_100-5} displays the computed state $u^*$ when the algorithm stops. Figure \ref{Np_N_256_beta_100-3} shows the decrease of the log $L^2$ error with respect to $u^*$. We also compute the manifold Hessian at $u^*$ and find that it has at least one negative eigenvalue. Thus $u^*$ is a strict saddle state.

\begin{figure}[ht]
    \centering
    \begin{subfigure}[t]{.32\linewidth}
        \includegraphics[width = \linewidth, align = c]{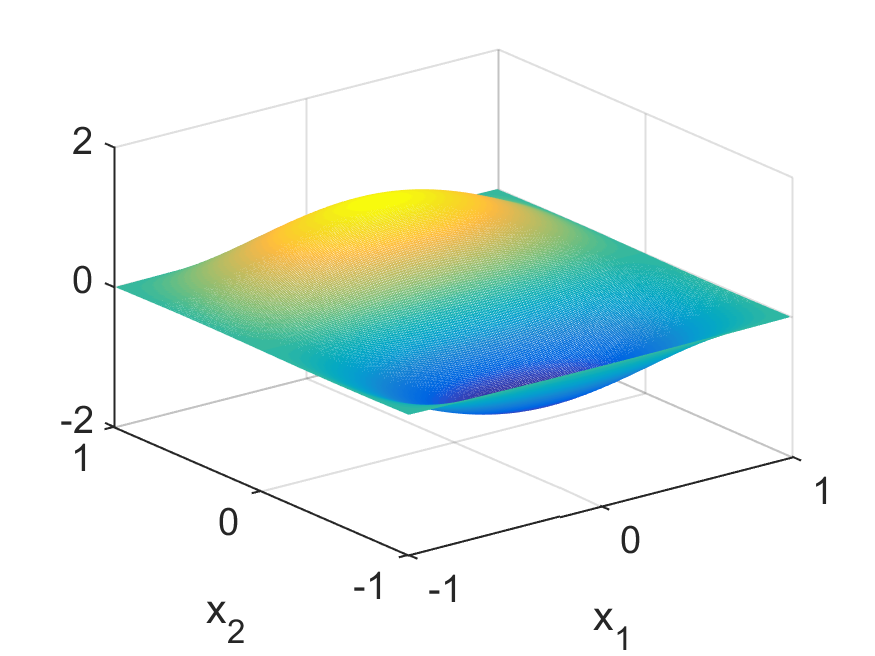}
        \caption{Profile of initial state $u_0$}
        \label{Np_N_256_beta_100-4}
    \end{subfigure}
    \hspace{0.01\textwidth}
    \begin{subfigure}[t]{.32\linewidth}
        \includegraphics[width = \linewidth, align = c]{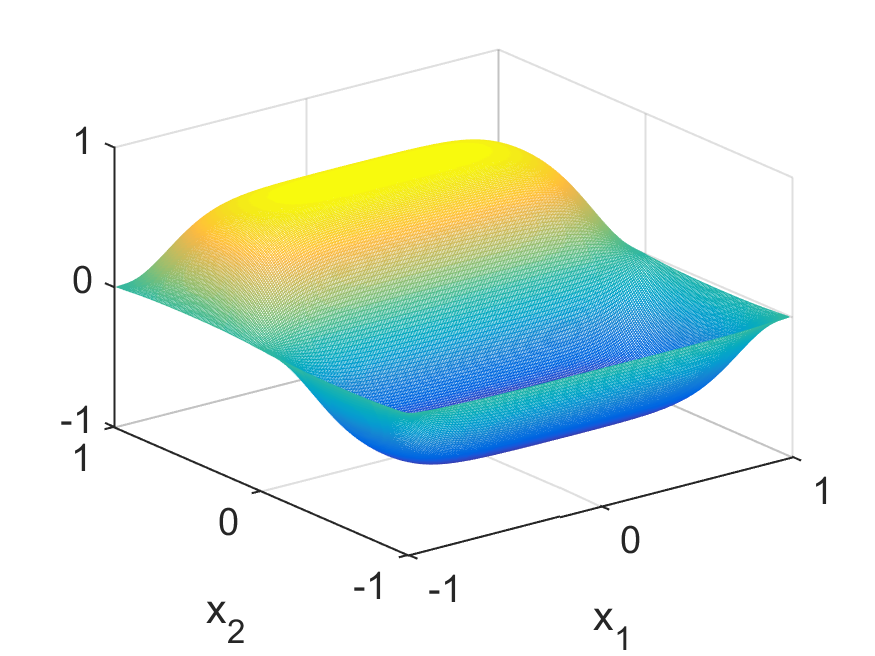}
        \caption{Profile of computed state $u^*$}
        \label{Np_N_256_beta_100-5}
    \end{subfigure}
    \hspace{0.01\textwidth}
    \begin{subfigure}[t]{.29\linewidth}
        \includegraphics[width = \linewidth, align = c]{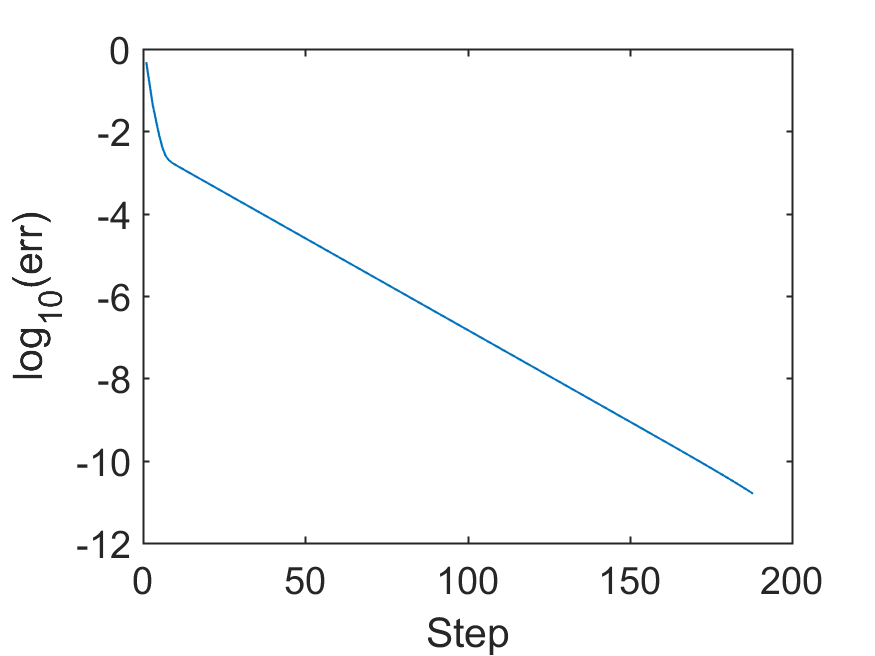}
        \caption{\rev{Log $H^1$-error convergence}}
        \label{Np_N_256_beta_100-3}
    \end{subfigure}
    \caption{Behavior of Sobolev PGD for (\ref{eq:eig_original}) with $u_0 \not\ge 0$.}
\end{figure}   

Next, we add a small perturbation to $u_0$: we let $\hat{u}_0 = u_0 + \epsilon\cdot \eta$, where $\eta$ is a random noise that is of the same order as $u_0$, and the parameter $\epsilon$ controls the magnitude of noise. We let Sobolev PGD start from $\hat{u}_0$ and trace its evolution. What we observe is that, as long as there is a small perturbation, Sobolev PGD escapes from the previous saddle state and converges to the ground state. The parameter $\epsilon$ can be chosen as small as $10^{-4}$ and this effect is still present. 

Specifically, Figures \ref{Np_N_256_beta_100_perturb_1E-2_saddle-2} to \ref{Np_N_256_beta_100_perturb_1E-4_saddle-2} demonstrate the evolution of the log-distance to the precomputed closest excited state $u^*$. We choose $\epsilon = 10^{-2}$, $10^{-3}$, and $10^{-4}$, respectively. Saddle escape behavior can be observed in all three cases. We can see that the distance to the excited state first goes down, then goes up. Figure \ref{Np_N_256_beta_100_perturb_1E-4-5} show the computed state starting from $\hat{u}_0$, and it is the ground state.

In general, first-order optimization methods, including Sobolev PGD as well as other methods in the gradient descent family, are not good choices for the computation of excited states. They rely on a good enough initialization (like the above $u_0$ without noise) and could suffer from numerical instability issues. One has to resort to other methods if the goal is to obtain high accuracy. We will explore this topic in our upcoming work.

\begin{figure}
    \begin{subfigure}[t]{.31\linewidth}
        \includegraphics[width = \linewidth, align = c]{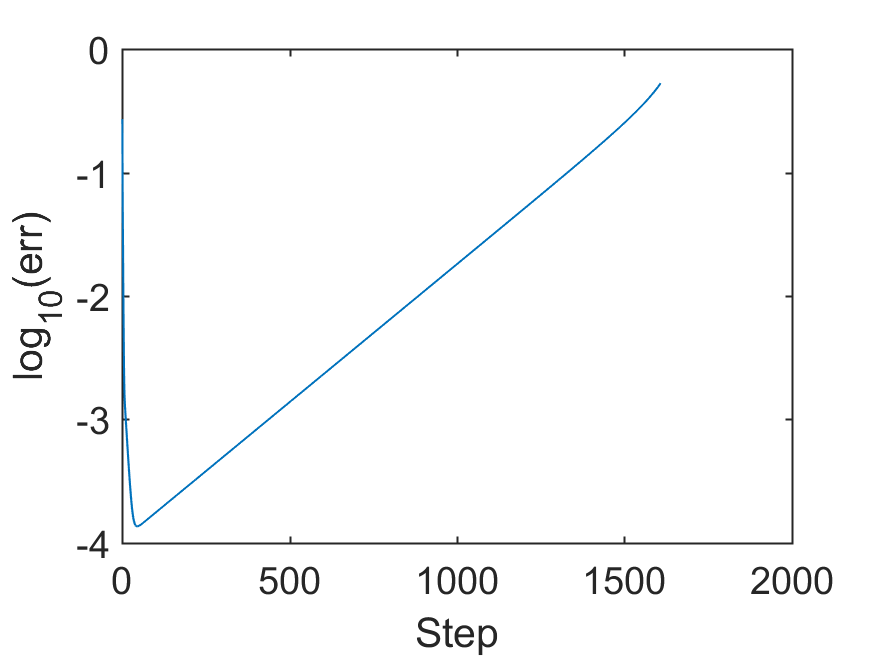}
        \caption{Noise level $\epsilon = 10^{-2}$}
        \label{Np_N_256_beta_100_perturb_1E-2_saddle-2}
    \end{subfigure}
    \hspace{0.01\textwidth}
    \begin{subfigure}[t]{.31\linewidth}
        \includegraphics[width = \linewidth, align = c]{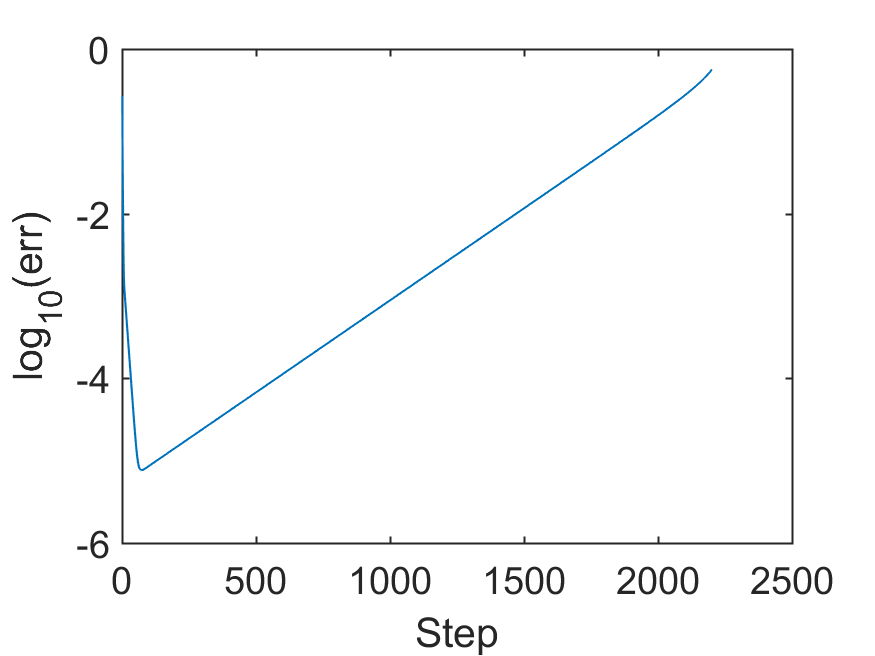}
        \caption{Noise level $\epsilon = 10^{-3}$}
        \label{Np_N_256_beta_100_perturb_1E-3_saddle-2}
    \end{subfigure}
    \hspace{0.01\textwidth}
    \begin{subfigure}[t]{.31\linewidth}
        \includegraphics[width = \linewidth, align = c]{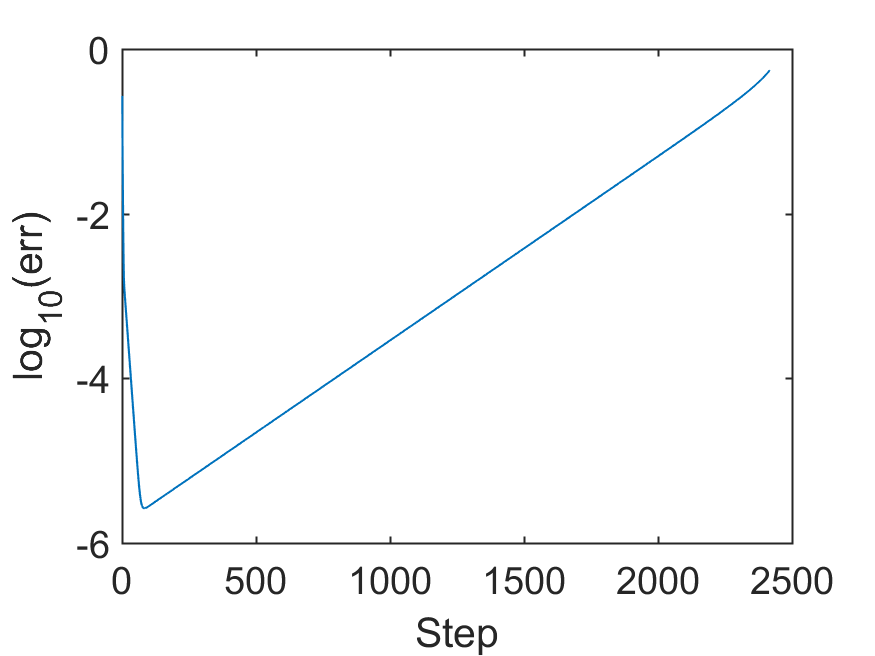}
        \caption{Noise level $\epsilon = 10^{-4}$}
        \label{Np_N_256_beta_100_perturb_1E-4_saddle-2}
    \end{subfigure}
    
    \hspace{0.16\textwidth}
    \begin{subfigure}[t]{.32\linewidth}
        \includegraphics[width = \linewidth, align = c]{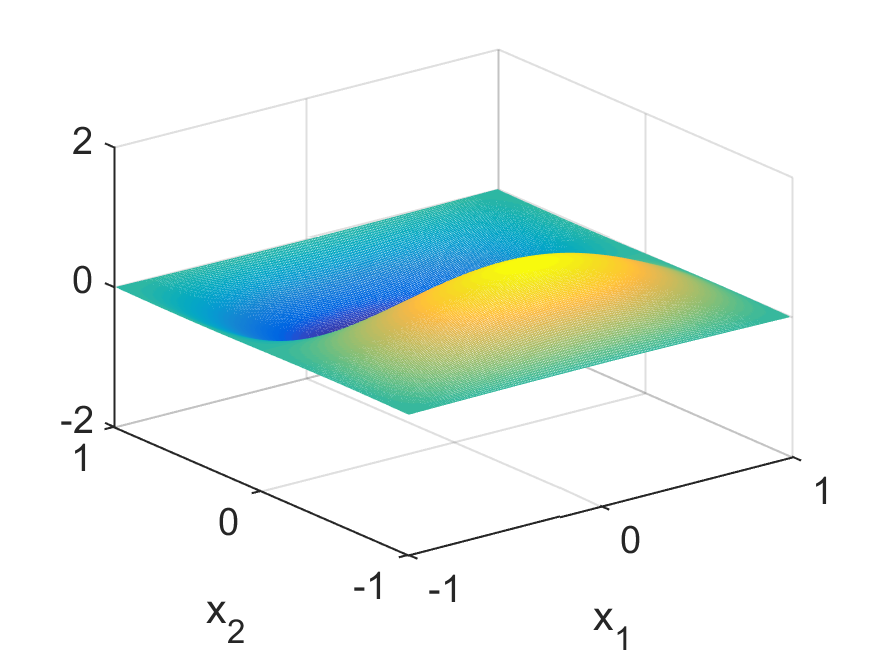}
        \caption{Profile of $\hat{u}_0 = u_0 + 10^{-4}\eta$}
        \label{Np_N_256_beta_100_perturb_1E-4-4}
    \end{subfigure}
    \hspace{0.01\textwidth}
    \begin{subfigure}[t]{.32\linewidth}
        \includegraphics[width = \linewidth, align = c]{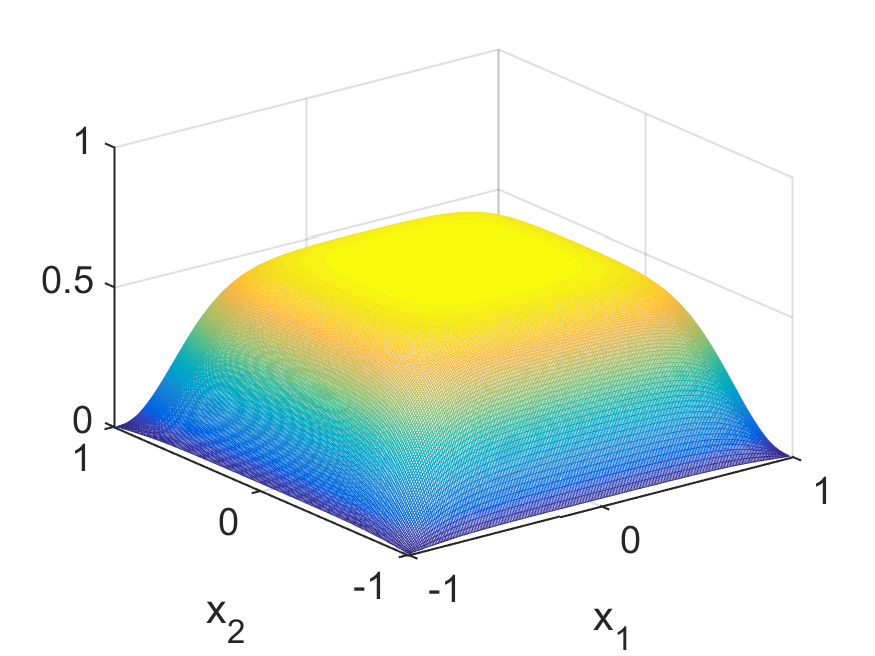}
        \caption{Profile of computed state starting from $\hat{u}_0$}
        \label{Np_N_256_beta_100_perturb_1E-4-5}
    \end{subfigure}
    \caption{Asymptotic escape from saddle state under small perturbations. Figures (a)-(c) displays the distances to the saddle state $u^*$ starting from $\hat{u}_0 = u_0 + \epsilon\cdot \eta$.}
\end{figure}

\subsection{High order interaction.}
We now look at Problem (\ref{eq:HOI}) with an extra high order interaction term. This adds additional nonlinearity to the problem. Consider the same domain $\Omega = [-1,1]^2 \subset \R^2$ and spatial discretization size $h = 2 \cdot 2^{-8}$. Let $V(x) = \frac{1}{2}|x|^2$ still be the single well potential. The first example is $\beta = 10$ and $\delta = 1$. 
Figure \ref{HOI_N_256_beta_10_delta_1-3} shows the log error convergence. The iteration converges in a few steps and shows a good convergence rate.

In the second example, we increase $\delta$ and look at the problem with strong high order interaction. We choose $\beta = 100$ and $\delta = 100$. Figure \ref{HOI_N_256_beta_100_delta_100-3} shows the log error convergence. The convergence rate is slower but stable.

\begin{figure}[ht]
    \centering
    \begin{subfigure}[t]{.32\linewidth}
        \includegraphics[width = \linewidth, align = c]{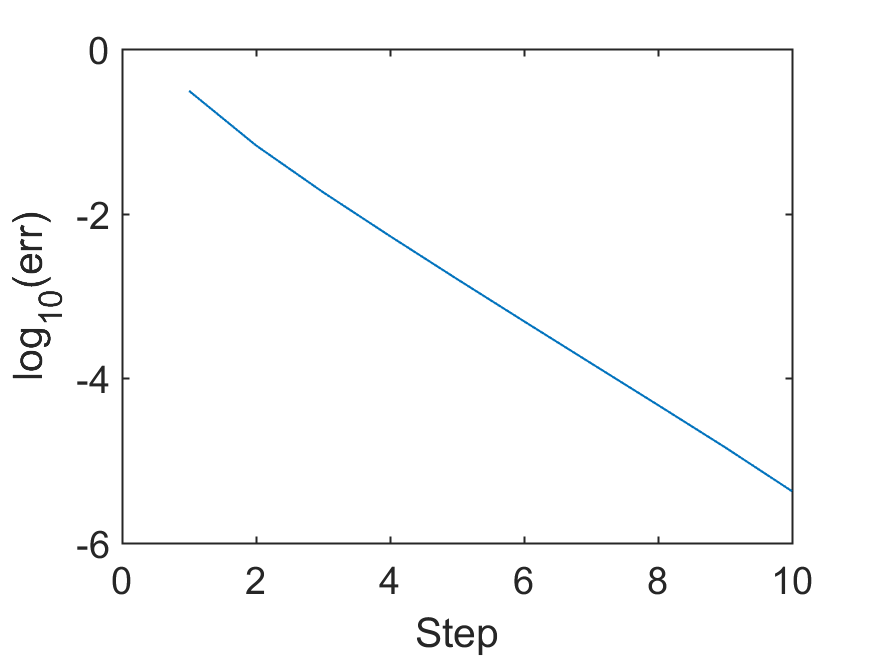}
        \caption{Convergence when $\beta = 10$, $\delta = 1$}
        \label{HOI_N_256_beta_10_delta_1-3}
    \end{subfigure}
    \hspace{0.05\textwidth}
    \begin{subfigure}[t]{.32\linewidth}
        \includegraphics[width = \linewidth, align = c]{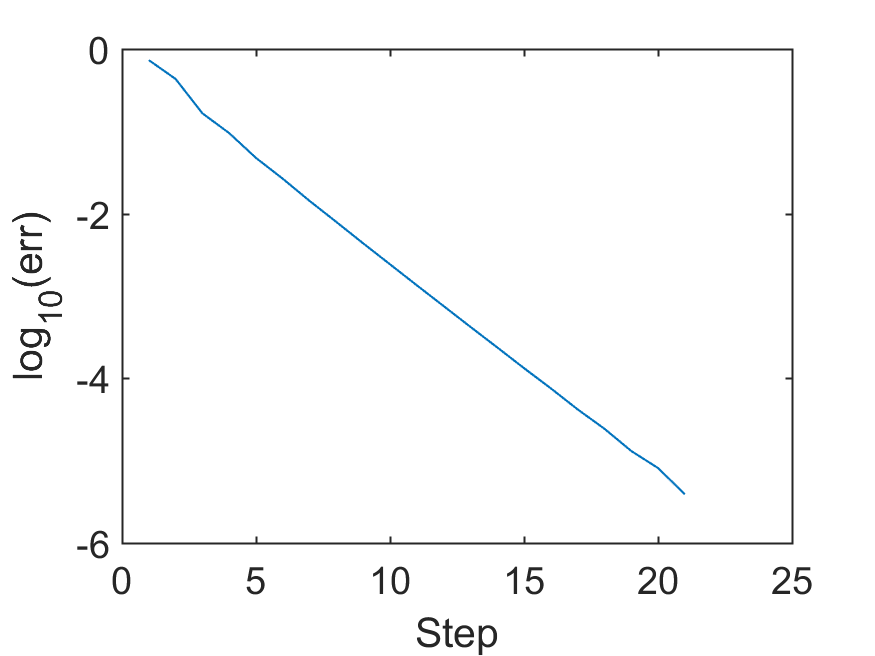}
        \caption{Convergence when $\beta = 100$, $\delta = 100$}
        \label{HOI_N_256_beta_100_delta_100-3}
    \end{subfigure}
    \caption{Examples of (\ref{eq:HOI}) with different nonlinear effects}
\end{figure}

\section{Conclusion}
\label{sec:conclusion}

In this paper, we analyzed the exponential convergence of the $a_u$-Sobolev gradient descent method without resorting to the time-continuous gradient flow. To this purpose, we introduced a general convergence tool using the {\L}ojasiewicz inequality, and adapted it to the setting of infinite dimensional Hilbert manifold and mixed norms. By proving the (\ref{eq:L}), (\ref{eq:D}) and (\ref{eq:S}) conditions for the Sobolev PGD, we were able to unveil the mechanism behind the good performance of the Sobolev PGD for the Gross-Pitaevskii eigenproblem (\ref{eq:eig_original}), which was only empirically observed in previous works. 

The success of the Sobolev PGD on the Gross-Pitaevskii eigenproblem inspires us to further explore alternative fast solvers for more general nonlinear eigenproblems and optimizations with high degree objective functions. Our analysis revealed that the essential condition is the ``double ground state'' property, namely the ground state of the nonlinear problem is also the unique ground state of the linearized operator at that point. This can be rigorously proved in some cases and seems to be true in a number of physical applications of interest based on empirical evidence. Specifically, we showed that this condition is satisfied for a nonlinear Schr\"odinger eigenproblem with extra high order interaction term. Thus the Sobolev PGD works well for this problem and has superiority over previous methods. 

\vspace{8pt}

{\textbf{Acknowledgements.}} This research was in part supported by NSF grants DMS-1912654 and DMS-1907977. The author would like to thank Thomas Y. Hou for the helpful comments on earlier versions of this work, and Zhenzhen Li for introducing the \L ojasiewicz inequality to the author. The author would also like to acknowledge the warm hospitality of Oberwolfach Research Institute for Mathematics during the seminar \emph{Beyond Numerical Homogenization}, where the early ideas of this work started.

\bibliographystyle{plain}
\bibliography{reference}

\appendix

\section{Proofs of technical lemmas}
\subsection{Proof of Lemma \ref{lemma:equiv}}
\label{sec:proof_lemma:equiv}

\begin{proof}
     As for the equivalence between $\|\cdot\|_{a_0}$ and $\|\cdot\|_{a_u}$, the second part of the inequality holds for all $0 < C_E \le 1$ since $u^2$ is nonnegative. For the first part, by Poincar\'e inequality, $\|z\|_{L^2}^2 \le C_P |z|_{H^1}^2$ for some domain constant $C_P = C_P(\Omega)$. Thus, we have
    \begin{align*}
        \|z\|_{a_0}^2 - C_E \|z\|_{a_u}^2 &= (1-C_E)|z|_{H^1}^2 + \int_{\Omega} ((1-C_E)V-C_E\beta u^2) z^2 \\
        &\ge (1-C_E) |z|_{H^1}^2 - C_E\beta   \int_{\Omega} u^2 z^2\\
        &\ge (1-C_E - C_E\beta M_0^2 C_P)|z|_{H^1}^2,   \qquad \forall z \in H_0^1(\Omega),\quad C_E \le 1.
    \end{align*}
    Take $0< C_E \le 1/(1 + \beta M_0^2 C_P)$, then $C_E \|z\|_{a_u}^2 \le \|z\|_{a_0}^2$. 
    
    As for the equivalence between $\|\cdot\|_{a_u}$ and $\|\cdot\|_{H^1}$, we have
    \begin{align*}
        \|z\|_{H^1}^2 - \widetilde{C_E} \|z\|_{a_u}^2 &= \|z\|_{H^1}^2 - \widetilde{C_E}|z|_{H^1}^2 - \widetilde{C_E}\int_{\Omega}  (V+\beta u ^2) z^2 \\
        &\ge \left(1-\widetilde{C_E}  - \widetilde{C_E}C_P(\|V\|_{L^\infty} + \beta M_0^2) \right) |z|_{H^1}^2, 
        \qquad \forall z \in H_0^1(\Omega),\quad \widetilde{C_E} \le 1.
    \end{align*}
    Take $0<\widetilde{C_E}\le 1/(1+C_P(\|V\|_{L^\infty}+\beta M_0^2))$, then $\widetilde{C_E}\|z\|_{a_u}^2 \le \|z\|_{H^1}^2$. On the other hand,
    \begin{align*}
        \widetilde{C_E}^{-1}\|z\|_{a_u}^2 -  \|z\|_{H^1}^2 &= (\widetilde{C_E}^{-1}-1)|z|_{H^1}^2 + \widetilde{C_E}^{-1}\int_{\Omega}  (V+\beta u ^2) z^2 - \|z\|_{L^2} \\
        & \ge \left(C_P^{-1}(\widetilde{C_E}^{-1}-1) + \widetilde{C_E}^{-1}\beta M_0^2 - 1 \right)\|z\|_{L^2}.
    \end{align*}
    Take $0<\widetilde{C_E}\le (1+C_P\beta M_0^2)/(1+C_P)$, then $\|z\|_{H^1}^2 \le \widetilde{C_E}^{-1}\|z\|_{a_u}^2$. The final choice of $\widetilde{C_E}$ is the smaller of the two.
\end{proof}

\subsection{Proof of Lemma \ref{lemma:perturb}}
\label{sec:proof_lemma:perturb}
\begin{proof}
    For notational simplicity, we allow the constants $C,\,C'$ to change their meanings through the proof. We also denote
    \begin{align*}
        t:= \|u-v\|_{H^1}.
    \end{align*}
    Using the variational form of the eigenvalues, we have 
    \begin{align*}
        \mu_1 &= \min_{\substack{z\in H_0^1(\Omega), \\ \|z\|_{L^2} = 1}} (z,z)_{a_u} \le (v,v)_{a_u}, \\
        \lambda_1 &= \min_{\substack{z\in H_0^1(\Omega), \\ \|z\|_{L^2} = 1}} (z,z)_{a_v} \le (w_1,w_1)_{a_v}, \\
        \lambda_1 + \lambda_2 &= \min_{\substack{z_1, z_2 \in H_0^1(\Omega), \\ \|z_1\|_{L^2} = \|z_2\|_{L^2} = 1, \\  z_1 \perp z_2 }} (z_1,z_1)_{a_v} + (z_2,z_2)_{a_v} \le (w_1, w_1)_{a_v} + (w_2, w_2)_{a_v}.
    \end{align*}
    We will use the above relations to bound the gap between $\mu_1$ and $\lambda_1$, and $\lambda_2$ and $\mu_2$. First, we have
    \begin{align*}
        \mu_1 \le (v,v)_{a_u} &= (v,v)_{a_v} + \int_{\Omega} \beta (u^2v^2-v^4) \\
        &= \lambda_1 + \int_{\Omega} \beta v^2(u+v)(u-v) \\
        &\le  \lambda_1 + 2\beta M_0^3 \int_{\Omega}|u-v| \\
        &\le \lambda_1 + C(\beta,M_0,\Omega) \cdot t.
    \end{align*}
    Therefore, there exists $C=C(\beta,M_0,\Omega)$ such that when $t\le C$, 
    \begin{align}
    \label{eq:tmp0}
        \mu_1 \le \lambda_1 + \frac{1}{6} C_v.
    \end{align}
    Next, we note that 
    \begin{align}
    \label{eq:tmp1}
    \begin{split}
        \lambda_1 + \lambda_2 &\le (w_1, w_1)_{a_v} + (w_2, w_2)_{a_v} \\
        &= (w_1, w_1)_{a_u} + (w_2, w_2)_{a_u} + \int_{\Omega}\beta(v^2-u^2)(w_1^2+w_2^2) \\
        &= \mu_1 + \mu_2 + \int_{\Omega}\beta (v+u)(v-u)(w_1^2+w_2^2).
    \end{split}
    \end{align}
    To estimate $\|w_1\|_{L^\infty}$, note that it is the weak solution of 
    \begin{align*}
        -\Delta w_1 + V w_1 + \beta u^2 w_1 = \mu_1 w_1.
    \end{align*}
    Since $V, \, u\in L^\infty(\Omega)$, by elliptic regularity, we get
    \begin{align*}
        \|w_1\|_{H^2} &\le C(\beta,V,M_0,\Omega) (\|w_1\|_{H^1} + \mu_1\|w_1\|_{L^2}) \\
        &\le C(\beta,V,M_0,\Omega) + C'(\beta,V,M_0,\Omega) \cdot \mu_1.
    \end{align*}
    When $d\le 3$, using Sobolev embedding, we obtain
    \begin{align*}
        \|w_1\|_{L^\infty} \le C(\Omega) \|w_1\|_{H^2}.
    \end{align*}
    Since we have shown that $\mu_1 \le \lambda_1 + C\cdot t$, putting them together we have
    \begin{align*}
        \|w_1\|_{L^\infty} \le C(\beta,V,M_0,\Omega,\lambda_1) + C'(\beta,V,M_0,\Omega,\lambda_1)\cdot t.
    \end{align*}
    Similarly, we can prove that\footnote{We omit the details of showing  $\mu_2 \le \lambda_2 + C\cdot t$ by showing $\mu_1 + \mu_2 \le \lambda_1 + \lambda_2 + C\cdot t$ using the variational form.}
    \begin{align*}
        \|w_2\|_{L^\infty} \le C(\beta,V,M_0,\Omega,\lambda_1,\lambda_2) + C'(\beta,V,M_0,\Omega,\lambda_1,\lambda_2)\cdot t.
    \end{align*}
    Plugging them back into (\ref{eq:tmp1}), we have
    \begin{align*}
        (w_1, w_1)_{a_v} + (w_2, w_2)_{a_v} &\le \mu_1+\mu_2 + \left(C(\beta,V,M_0,\Omega,\lambda_1,\lambda_2) + C'(\beta,V,M_0,\Omega,\lambda_1,\lambda_2)\cdot t\right)^2 \cdot t.
    \end{align*}
    Therefore, there exists $C= C(\beta,V,M_0,\Omega,\lambda_1,\lambda_2)$, such that when $t \le C$,
    \begin{align}
    \label{eq:tmp2}
        \lambda_1 + \lambda_2 \le \mu_1 + \mu_2 + \frac{1}{6}C_v.
    \end{align}
    Combining (\ref{eq:tmp0}) and (\ref{eq:tmp2}), we have 
    \begin{align}
        \label{eq:tmp3}
        \mu_1 \le \lambda_1 +\frac{1}{6}C_v, \qquad \mu_2 \ge \lambda_2 - \frac{1}{3}C_v, \qquad \mu_2 - \mu_1 \ge \frac{1}{2} C_v.
    \end{align}
    Next, note that
    \begin{align*}
        \lambda_1 \le (w_1,w_1)_{a_v} &= (w_1,w_1)_{a_u} + \int_{\Omega} \beta (v^2-u^2)w_1^2 \\
        &\le \mu_1 + C(\beta,V,M_0,\Omega)\|w_0\|_{L^\infty}^2 \cdot t\\
        &\le \mu_1 + ( C(\beta,V,M_0,\Omega)+C'(\beta,V,M_0,\Omega)\cdot t)^2 \cdot t.
    \end{align*}
    Therefore, there exists $C=C(\beta,V,M_0,\Omega,\lambda_1)$ such that when $t\le C$,
    \begin{align}
    \label{eq:tmp4}
        \lambda_1 \le \mu_1 + \frac{1}{6}C_v.
    \end{align}
    Equations (\ref{eq:tmp0}), (\ref{eq:tmp3}) and (\ref{eq:tmp4}) contain all the relations between $\lambda_1,\,\lambda_2,\,\mu_1$, and $\mu_2$ that we will need.
    
    Since $\{w_i\}_{i=1}^\infty$ forms an orthonormal basis of $H_0^1(\Omega)$, in order to estimate $\|u-w_1\|_{L^2}$, it suffices to bound $(u,u)_{a_u} - \mu_1$. Note that 
    \begin{align*}
        (u,u)_{a_u} - \lambda_1 &= (u,u)_{a_u} - (v,v)_{a_v}  \\
        &= (u,u)_{a_u} - (v,v)_{a_u} + \int_{\Omega} \beta (u^2v^2-v^4) \\
        &\le (\|u\|_{a_u}+\|v\|_{a_u})\cdot\|u-v\|_{a_u} + \int_{\Omega} \beta v^2 (u+v)(u-v) \\
        &\le C(\beta, V, M_0, \Omega) (\|u\|_{H^1}+\|v\|_{H^1})\cdot\|u-v\|_{H^1} + \int_\Omega \beta v^2 (u+v)(u-v) \\
        &\le C(\beta, V, M_0, \Omega)\cdot t.
    \end{align*}
    The fourth inequality uses the norm equivalence in Lemma \ref{lemma:equiv}.
    Thus, there exists $C=C(\beta, V, M_0, \Omega)$, such that when $t\le C$, 
    \begin{align}
        \label{eq:tmp5}
        (u,u)_{a_u} - \lambda_1 \le \frac{1}{12} C_v.
    \end{align}
    Combining (\ref{eq:tmp3}), (\ref{eq:tmp4}) and (\ref{eq:tmp5}), we have
    \begin{align*}
        (u,u)_{a_u} - \mu_1 \le \frac{1}{4}C_v \le \frac{1}{2}(\mu_2-\mu_1).
    \end{align*}
    Assume that $u =\sum_{i=1}^\infty c_i w_i$, where $\sum_{i=1}^\infty c_i^2 = 1$. Then we get
    \begin{align*}
        (u,u)_{a_u} - \mu_1 = \sum_{i=1}^\infty c_i^2 \mu_i - \mu_1 \ge c_1^2\mu_1 + \sum_{i=2}^\infty c_i^2\mu_2 -\mu_1 = (1-c_1^2) (\mu_2-\mu_1).
    \end{align*}
    Since $(u,u)_{a_u} - \mu_1 \le \frac{1}{2}(\mu_2-\mu_1)$, we have
    \begin{align*}
        1-c_1^2 \le \frac{1}{2}, \qquad |c_1| \ge \frac{1}{\sqrt{2}}.
    \end{align*}
    If $c_1 \le -1/\sqrt{2}$, we can use $-w_1$ to replace $w_1$. Thus, we always have $c_1 \ge 1/\sqrt{2}$. This gives
    \begin{align*}
        \|u-w_1\|_{L^2} = \sqrt{2-2c_1} \le \sqrt{2-\sqrt{2}} < 1.
    \end{align*}
    In other words, $s \le \sqrt{2-\sqrt{2}}$. The constant $C$ in the statement of the lemma is the smallest of all the constants $C$, $C'$ in the proof. Since $\lambda_2 = \lambda_1 + C_v$, the dependence on $\lambda_2$ is the dependence on $C_v$.
\end{proof}

\subsection{Proof of Lemma \ref{lemma:linear}}
\label{sec:proof_lemma:linear}
\begin{proof}
    Since $\mu_2$ is strictly greater than $\mu_1$, we can split $\A$ and $u$ as
    \begin{align*}
        &\A = \A^{(1)}+\A^{(2)}, \quad \A^{(1)} =  \A P_{w_1}, \quad \A^{(2)} = \A P_{w_1}^\perp, \\
        &u = u^{(1)} + u^{(2)}, \quad u^{(1)} = P_{w_1} u, \quad u^{(2)} = P_{w_1}^\perp u.
    \end{align*}
    Here $P_{w_1}$ is the orthogonal projection onto the subspace of $w_1$ under the $L^2$ inner product, and $P_{w_1}^\perp = id - P_{w_1}$. 
    Then $\A^{(1)} u^{(1)} = \mu_1 u^{(1)}$, and $(u^{(2)}, u^{(2)})_{\A^{(2)}} \ge \mu_2 \|u^{(2)}\|_{L^2}^2$ since $u^{(2)} \perp w_1$. 
    By definition of $\G$, $(u, \G v)_\A = (u, v)_{L^2}$ for any $u, v \in X$. We have
    \begin{align*}
        (u, \G u^{(1)})_{L^2} &= \mu_1^{-1} \|u^{(1)}\|^2_{L^2}, \\ 
        (u, \G u^{(2)})_{L^2} &= (u^{(1)}, \G u^{(2)})_{L^2} + (u^{(2)}, \G u^{(2)})_{L^2} = (u^{(2)}, \G u^{(2)})_{L^2} ,\\
        (u^{(2)}, \G u^{(2)})_{L^2} &= (\G u^{(2)}, \G u^{(2)})_{\A} \ge \mu_2 \|\G u^{(2)}\|^2_{L^2} \\
        &= \mu_2 \|u^{(2)}\|^{-2}_{L^2} \cdot (\|\G u^{(2)}\|^2_{L^2} \|u^{(2)}\|^2_{L^2}) \ge  \mu_2 \|u^{(2)}\|^{-2}_{L^2} \cdot (u^{(2)}, \G u^{(2)})^2_{L^2}, \\
        \text{i.e., }& (u, \G u^{(2)})_{L^2} \le \mu_2^{-1} \|u^{(2)}\|_{L^2}^2.
    \end{align*}
    Therefore, the objective inequality is transformed into
    \begin{align*}
        & C_L \left((u,u)_\A - \frac{1}{(u,\G u)_{L^2}} \right) - \left((u,u)_\A - (w_1, w_1)_\A \right)\\
        &= (C_L-1) (u,u)_\A - \frac{C_L}{(u,\G u)_{L^2}} + \mu_1   \\
        &= (C_L-1) ((u^{(1)}, u^{(1)})_{\A^{(1)}} + (u^{(2)}, u^{(2)})_{\A^{(2)}}) - \frac{C_L}{(u, \G u^{(1)})_{L^2} + (u, \G u^{(2)})_{L^2}} + \mu_1  \\
        & \ge (C_L-1) (\mu_1\|u^{(1)}\|^2_{L^2} + \mu_2 \|u^{(2)}\|^2_{L^2} ) - \frac{C_L}{\mu_1^{-1} \|u^{(1)}\|^2_{L^2} + \mu_2^{-1} \|u^{(2)}\|_{L^2}^2} + \mu_1\\
        &= (C_L-1) (\mu_1 + (\mu_2-\mu_1) \|u^{(2)}\|^2_{L^2} ) - \frac{C_L \mu_1 \mu_2}{\mu_2 + (\mu_1-\mu_2) \|u^{(2)}\|_{L^2}^2} + \mu_1 \\
        &= (\mu_2-\mu_1)\frac{((C_L-1)\mu_2-C_L\mu_1) \|u^{(2)}\|_{L^2}^2 - (C_L-1) (\mu_2-\mu_1) \|u^{(2)}\|_{L^2}^4}{\mu_2 + (\mu_1-\mu_2) \|u^{(2)}\|_{L^2}^2}.
    \end{align*}
     We look for $C_L$ and $u$ such that the above is greater than or equal to 0. In fact, for any $C_L>1$, if
     \begin{align*}
         0 \le \|u^{(2)}\|_{L^2}^2  \le \frac{(C_L-1)\mu_2-C_L\mu_1}{(C_L-1) (\mu_2-\mu_1)},
     \end{align*}
     then this is satisfied. Note that $\|u-v_1\|_{L^2} \le {s}$ implies $\|u^{(2)}\|_{L^2}^2 \le {s}^2$. So the requirement on $C_L$ is 
     \begin{align*}
         C_L \ge 1+\frac{\mu_2}{(\mu_2-\mu_1)(1-{s}^2)}.
     \end{align*}
\end{proof}

\subsection{Proof of Lemma \ref{lemma:HOI}}
\label{sec:proof_lemma:HOI}
\begin{proof}
    The main idea of the proof is the same as that of Lemma \ref{lemma:perturb} so we only point out their differences here. For example, to estimate $\mu_1 - \lambda_1$, we have
    \begin{align*}
        \mu_1 \le (v,v)_{a_u} & = (v,v)_{a_v} + \int_{\Omega}  \beta (u^2v^2-v^4) + \int_{\Omega} \delta \left((\nabla(uv)^2 - \nabla(v^2)^2 ) \right) \\
        &= \lambda_1 + \int_\Omega \beta v^2(u+v)(u-v) + \int_\Omega \delta (\nabla (uv) + \nabla (v^2)) (\nabla (uv) - \nabla (v^2)).
    \end{align*}
    The second term is bounded in the same way as the proof of Lemma \ref{lemma:perturb}. Only the third term containing high-order interaction is new. To bound the third term, we note that
    \begin{align*}
        &\int_\Omega \delta (\nabla (uv) + \nabla (v^2)) (\nabla (uv) - \nabla (v^2)) \\
        &= \delta \int_\Omega (v \nabla u  + u \nabla v + 2 v\nabla v)(v \nabla u  + u \nabla v - 2 v\nabla v) \\
        &\le  4 \delta M_0 M_1 \int_\Omega\left|v \nabla u  + u \nabla v - 2 v\nabla v\right| \\
        &=  4 \delta M_0 M_1 \int_\Omega\left|v ( \nabla u-\nabla v) + (u - v)\nabla v\right| \\
        &\le C(\delta,M_0,M_1,\Omega)\|u-v\|_{H^1}.
    \end{align*}
    Similar bounds can be obtained in the estimation of $(\lambda_1+\lambda_2)-(\mu_1+\mu_2)$, $\lambda_1 - \mu_1$, and $(u,u)_{a_u} - \mu_1$. The dependence of the constant $C$ only has two additional dependencies which are $\delta$ and $M_1$.
\end{proof}


\end{document}